\newcommand{\al}{\alpha}
\newcommand{\be}{\beta}
\newcommand{\ga}{\gamma}
\newcommand{\de}{\delta}
\newcommand{\la}{\lambda}
\newcommand{\om}{\omega}
\newcommand{\eps}{\varepsilon}
\newcommand{\vv}{\varphi}
\theoremstyle{plain}
\numberwithin{equation}{section}
\newtheorem{thm}{Theorem}[section]
\newtheorem{lem}[thm]{Lemma}
\newtheorem{prop}[thm]{Proposition}
\newtheorem{cor}[thm]{Corollary}
\theoremstyle{definition}
\newtheorem{ip}[thm]{Inverse Problem}
\theoremstyle{remark}
\DeclareMathOperator*{\Res}{Res}
\begin{document}

\begin{center}
{\Large\bf Inverse spectral problem \\[0.2cm] for the third-order differential equation}
\\[0.5cm]
{\bf Natalia P. Bondarenko}
\end{center}

\vspace{0.5cm}

{\bf Abstract.} This paper is concerned with the inverse spectral problem for the third-order differential equation with distribution coefficient. The inverse problem consists in the recovery of the differential expression coefficients from the spectral data of two boundary value problems with separated boundary conditions. For this inverse problem, we solve the most fundamental question of the inverse spectral theory about the necessary and sufficient conditions of solvability. In addition, we prove the local solvability and stability of the inverse problem. Furthermore, we obtain very simple sufficient conditions of solvability in the self-adjoint case. The main results are proved by a constructive method that reduces the nonlinear inverse problem to a linear equation in the Banach space of bounded infinite sequences. In the future, our results can be generalized to various classes of higher-order differential operators with integrable or distribution coefficients.

\medskip

{\bf Keywords:} inverse spectral problems; third-order differential operator; distribution coefficients; necessary and sufficient conditions; spectral data characterization; method of spectral mappings. 

\medskip

{\bf AMS Mathematics Subject Classification (2020):} 34A55 34B09 34B05 34E05 46F10  

\vspace{1cm}

\section{Introduction} \label{sec:intr}

This paper deals with the third-order differential equation
\begin{equation} \label{eqv}
y''' + (\tau_1(x) y)' + \tau_1(x) y' + \tau_0(x) y = \la y, \quad x \in (0,1),
\end{equation}
where $\la$ is the spectral parameter, $\tau_1 \in L_2(0,1)$, $\tau_0 \in W_2^{-1}(0,1)$, that is, $\tau_0 = \sigma_0'$, $\sigma_0 \in L_2(0,1)$, and the derivative of $L_2$-function is understood in the sense of distributions.

We study the recovery of the coefficients $\tau_0$ and $\tau_1$ from the spectral data of the two boundary value problems $\mathcal L_1$ and $\mathcal L_2$ for  equation \eqref{eqv} with the following boundary conditions:
\begin{align} \label{bc1}
\mathcal L_1 \colon & \quad y(0) = 0, \quad y(1) = y'(1) = 0, \\ \label{bc2}
\mathcal L_2 \colon & \quad y(0) = y'(0) = 0, \quad y(1) = 0.
\end{align}

In recent years, spectral theory of the third-order differential operators with non-smooth and distributional coefficients attracts considerable attention of scholars (see, e.g., \cite{BP19, Kor19, Ug19, Ug20, BK21, Ao22-3ord, ZLW23}). The third-order differential equations arise in various physical applications, e.g., in modelling thin membrane flow of viscous liquid and elastic beam vibrations (see \cite{Greg87, BP96, TS90}). The third-order operators play an important role in the integration of the nonlinear Boussinesq equation (see \cite{McK81}).

The paper is concerned with the theory of inverse spectral problems, which consist in the recovery of differential operators from their spectral characteristics. The greatest success in inverse spectral theory has been achieved for the second-order Sturm-Liouville operator $-y'' + q(x) y$ (see the monographs \cite{Mar77, Lev84, FY01, Krav20} and references therein). The basic results for the inverse Sturm-Liouville problems were obtained by the Gelfand-Levitan method \cite{GL51}. However, this method appeared to be ineffective for the higher-order differential operators:
\begin{equation} \label{ho}
y^{(n)} + \sum_{k = 0}^{n-2} p_k(x) y^{(k)}, \quad n > 2.
\end{equation}

Therefore, the investigation of inverse problems for operators \eqref{ho} required the development of new approaches. The general inverse problem theory for the higher-order differential operators \eqref{ho} with integrable coefficients $p_k$ on a finite interval and on the half-line has been constructed by Yurko \cite{Yur92, Yur95-mn, Yur00, Yur02} by the method of spectral mappings. The central idea of this method consists in the reduction of a nonlinear inverse problem to a linear equation in a suitable Banach space. The main technical tool is the contour integration of specific functions called the spectral mappings, which first appeared in the papers of Leibenson \cite{Leib66, Leib71}. Inverse scattering problems for higher-order differential operators on the full line were considered by Beals \cite{Beals85} and his research group.

Recently, the study of inverse spectral problems began for higher-order differential operators with distribution coefficients. In \cite{Bond21, Bond22-half}, the uniqueness theorems for the recovery of such operators on a finite interval and on the half-line have been proved. In \cite{Bond22-alg}, a constructive approach for solving inverse spectral problems has been developed. This approach can be applied to various classes of differential operators with regular or distributional coefficients. The methods of \cite{Bond21, Bond22-half, Bond22-alg} rely on the  regularization of differential operators with distribution coefficients (see \cite{MS16, MS19, Vlad17}) and on the ideas of the method of spectral mappings \cite{Yur02}. In the reconstruction technique of \cite{Bond22-alg}, an important role is played by the spectral data asymptotics which have been deduced in \cite{Bond23-asympt} by using the Birkhoff-type solutions recently obtained in \cite{SS20}. However, necessary and sufficient conditions (NSC) of inverse problem solvability for higher-order differential operators with distribution coefficients, to the best of the author's knowledge, have not been investigated yet.

Note that the NSC question is the most fundamental issue in the theory of inverse spectral problems. At the same time, this issue is usually the most difficult for investigation. As an example, let us consider the following well-known result for the Sturm-Liouville problem:
\begin{gather} \label{StL}
    -y'' + q(x) y = \la y, \quad x \in (0,1), \\ \label{StLbc}
    y'(0) - h y(0) = 0, \quad y'(1) + H y(1) = 0,
\end{gather}
where $q(x)$ is a real-valued function of $L_2(0,1)$, $h$ and $H$ are real constants. Denote by $\{ \la_n \}_{n = 1}^{\infty}$ and $\{ y_n(x) \}_{n = 1}^{\infty}$ the eigenvalues of the problem \eqref{StL}-\eqref{StLbc} and the corresponding eigenfunctions normalized by the condition $y_n(0) = 1$. 
Consider the spectral data $\{ \la_n, \al_n \}_{n = 1}^{\infty}$ which consist of the eigenvalues and
the weight numbers $\al_n := \int_0^1 y_n^2(x) \, dx$, $n \in \mathbb N$.

\begin{prop}[\cite{GL51, FY01}] \label{prop:StL}
For numbers $\{ \la_n, \al_n \}_{n = 1}^{\infty}$ to the the spectral data of some Sturm-Liouville problem of form \eqref{StL}-\eqref{StLbc}, the following conditions are necessary and sufficient:
\begin{gather} \label{nsc1}
    \la_n \in \mathbb R, \quad \la_n \ne \la_m, \: n \ne m, \quad \al_n > 0, \\ \label{nsc2}
    \sqrt{\la_n} = \pi n + \frac{\om}{\pi n} + \frac{\varkappa_n}{n}, \quad
    \al_n = \frac{1}{2} + \frac{\varkappa_{n1}}{n},
\end{gather}
where $\om = \frac{1}{2}\int_0^1 q(x) \, dx$, $\{ \varkappa_n \} \, \{ \varkappa_{n1} \} \in l_2$. 
\end{prop}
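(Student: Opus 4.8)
The plan is to treat the two directions separately: necessity is essentially asymptotic analysis, while sufficiency requires the full Gelfand--Levitan construction.

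\textbf{Necessity.} Suppose $\{\la_n, \al_n\}$ are the spectral data of a problem \eqref{StL}--\eqref{StLbc} with real $q \in L_2(0,1)$ and real $h, H$. The associated operator is self-adjoint in $L_2(0,1)$ with separated boundary conditions, so its spectrum is real and simple; this gives $\la_n \in \mathbb R$ and $\la_n \ne \la_m$ for $n \ne m$. The eigenfunctions $y_n$ may be taken real and are nonzero, so $\al_n = \int_0^1 y_n^2\,dx > 0$, which establishes \eqref{nsc1}. For the asymptotics \eqref{nsc2}, I would start from the transformation-operator representation of the solution $\varphi(x,\la)$ with $\varphi(0,\la)=1$, $\varphi'(0,\la)=h$, namely $\varphi(x,\la) = \cos(\sqrt{\la}\,x) + \int_0^x K(x,t)\cos(\sqrt{\la}\,t)\,dt$, where the kernel $K$ inherits the $L_2$-regularity of $q$. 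The eigenvalues are the zeros of the characteristic function $\Delta(\la) = \varphi'(1,\la) + H\varphi(1,\la)$; an asymptotic expansion of $\Delta$ together with a Rouch\'{e} argument yields $\sqrt{\la_n} = \pi n + \tfrac{\om}{\pi n} + \tfrac{\varkappa_n}{n}$ with $\om = \tfrac12\int_0^1 q\,dx$ and $\{\varkappa_n\} \in l_2$, the square-summable remainder being exactly the refinement gained from $q \in L_2$. Substituting the eigenfunction asymptotics into $\al_n = \int_0^1 \varphi^2(x,\la_n)\,dx$ and integrating then gives $\al_n = \tfrac12 + \tfrac{\varkappa_{n1}}{n}$.

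\textbf{Sufficiency.} This is the hard direction. Given numbers satisfying \eqref{nsc1}--\eqref{nsc2}, I would fix the model problem $\tilde{\mathcal L}$ with $\tilde q = 0$, $\tilde h = \tilde H = 0$, whose spectral data are $\{\tilde\la_n, \tilde\al_n\} = \{(\pi n)^2, \tfrac12\}$ and thus share the leading asymptotics of the given data, and then form the Gelfand--Levitan kernel
\begin{equation*}
F(x,t) = \sum_{n=1}^\infty \left( \frac{\cos(\sqrt{\la_n}\,x)\cos(\sqrt{\la_n}\,t)}{\al_n} - \frac{\cos(\pi n x)\cos(\pi n t)}{\tilde\al_n} \right).
\end{equation*}
Conditions \eqref{nsc1}--\eqref{nsc2} are precisely what guarantees that this series converges and that $F$ has the regularity needed below. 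For each fixed $x \in (0,1)$ one then solves the main integral equation
\begin{equation*}
F(x,t) + K(x,t) + \int_0^x K(x,s)\, F(s,t)\, ds = 0, \quad 0 < t < x,
\end{equation*}
for the kernel $K(x,t)$, sets $q(x) = 2\tfrac{d}{dx}K(x,x)$, recovers $h$ from $K(0,0)$ and $H$ from the behaviour at $x = 1$, and finally verifies that the reconstructed problem reproduces the prescribed spectral data.

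\textbf{Main obstacle.} The crux is the unique solvability of the main equation, i.e.\ the invertibility in $L_2(0,x)$ of the operator $I + F_x$, where $F_x$ denotes integration against $F(\cdot,\cdot)$ on $(0,x)$. Here the positivity $\al_n > 0$ is essential: it makes the associated quadratic form nonnegative, and the $l_2$-asymptotics upgrade this to strict positivity, so that $I + F_x$ has trivial kernel and, being a compact perturbation of the identity, is boundedly invertible. The second delicate point is closing the loop, namely checking that the $q, h, H$ built from $K$ actually have $\{\la_n, \al_n\}$ as spectral data; I would do this by differentiating the main equation to show that $\varphi(x,\la) = \cos(\sqrt{\la}\,x) + \int_0^x K(x,t)\cos(\sqrt{\la}\,t)\,dt$ solves \eqref{StL} and satisfies the boundary conditions, and then identifying the spectral data of the constructed problem with the given sequences through the completeness properties encoded in $F$.
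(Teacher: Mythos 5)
The paper gives no proof of Proposition~\ref{prop:StL}: it is quoted as a known result from \cite{GL51, FY01}, so there is no internal argument to compare against. Your sketch is the classical Gelfand--Levitan route of the first cited source — necessity from self-adjointness plus transformation-operator asymptotics and Rouch\'e, sufficiency from the G--L integral equation with invertibility of $I+F_x$ secured by $\al_n>0$ and completeness — and as an outline it is correct. One substantive caveat: the step you compress into ``conditions \eqref{nsc1}--\eqref{nsc2} are precisely what guarantees that this series converges and that $F$ has the regularity needed'' is exactly where the historical difficulty lay. The paper's own discussion notes that \cite{GL51} obtained sufficiency only under \emph{stronger} asymptotics, and that the gap-free version is the one in \cite{FY01}, where it is proved by the method of spectral mappings (contour integration and a main equation in the sequence space $m$ — the very machinery this paper extends to third order) rather than by the G--L equation. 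A G--L proof at the sharp $L_2$/$l_2$ level is possible, but it requires showing that $\frac{d}{dx}K(x,x)$ lands in $L_2(0,1)$, which needs considerably more than uniform convergence of the cosine series for $F$; that is the one place your sketch would need real work rather than routine verification. Two minor bookkeeping points, both inherited from the statement as printed: for Robin conditions the constant in the eigenvalue asymptotics is $\om = h+H+\frac12\int_0^1 q\,dt$, and the model problem $\tilde q=0$, $\tilde h=\tilde H=0$ has lowest eigenvalue $0$ with weight $1$, so its data are not literally $\{(\pi n)^2,\tfrac12\}_{n\ge1}$ and the index alignment must be adjusted.
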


Proposition~\ref{prop:StL} is remarkable by the concise form of the NSC, which include only simple structural properties \eqref{nsc1} and the asymptotics \eqref{nsc2}. For the first time, NSC of the inverse Sturm-Liouville problem solvability have been obtained in the seminal paper by Gelfand and Levitan \cite{GL51}. However, the results of \cite{GL51} were slightly different from Proposition~\ref{prop:StL}, because in \cite{GL51} there was a gap between the necessary conditions and the sufficient ones. By the sufficiency, more precise asymptotics were required. Later on, that gap has been removed. The NSC of Proposition~\ref{prop:StL} without the gap can be found, e.g., in \cite{FY01}. This historical example shows that, even for the simplest second-order operator, obtaining NSC on the spectral data required considerable effort.

For the higher-order differential operators \eqref{ho} with regular coefficients, the NSC of the inverse problem solvability have been obtained by Yurko (see \cite[Theorem~2.3.1]{Yur02}). However, in contrast to Proposition~\ref{prop:StL}, Theorem~2.3.1 of \cite{Yur02} contains several hard-to-verify conditions. First, it requires the existence of a model problem whose spectral data are asymptotically close in some sense to the given data. Second, the unique solvability of the main linear equation is required. Although this condition is unavoidable for non-self-adjoint operators, it is important to study the special cases when the main equation solvability can be deduced from some easy-to-verify conditions. Third, the reconstruction formulas for the differential expression coefficients in \cite{Yur02} have the form of series and, in Theorem~2.3.1, the a posteriori requirement of the series convergence in the appropriate spaces is imposed. The reason of this last requirement is that, in Theorem~2.3.1, the coefficients $p_k$ of \eqref{ho} belong to non-Hilbert spaces. In Theorem~2.3.3 of \cite{Yur02}, the NSC without such a posteriori requirement for the Hilbert space case are provided without proofs. Thus, despite the fact that Yurko's results were a great advance in the theory of inverse problems for higher-order differential operators, these results are not final and the study of such problems need to be continued. We also emphasize that Yurko's results concern only the case of regular coefficients. For differential operators with distribution coefficients, the NSC question is completely open. In the author's opinion, the study of the distribution coefficient case will help not only to construct the general inverse problem theory of higher-order differential operators but also to overcome some difficulties which arise in the case of regular coefficients.

The goal of this paper is to obtain the NSC on the spectral data of the third-order equation \eqref{eqv}. The third order is chosen because of the two reasons. First, for the third-order equation \eqref{eqv}, the inverse problem solvability conditions have been obtained in the simplest form (see Theorem~\ref{thm:suff}), without any hard-to-verify requirements. Second, the third order is convenient for presentation of the proof technique, which in the future can be generalized to arbitrary orders. We treat the differential equation \eqref{eqv} with distribution coefficient in terms of the Mirzoev-Shkalikov approach \cite{MS16, MS19}. For simplicity, we choose the boundary conditions \eqref{bc1}-\eqref{bc2} of the lowest possible orders. They are analogous to the Dirichlet boundary conditions $y(0) = y(1) = 0$ for the Sturm-Liouville operator. The other types of separated boundary conditions can be studied similarly. Non-separated boundary conditions (e.g., the periodic ones) are fundamentally different and so require a separate investigation.

As spectral data, we take the eigenvalues $\{ \la_{n,k} \}$ and the weight numbers $\{ \be_{n,k} \}$ of the two boundary value problems $\mathcal L_k$, $k = 1, 2$, and, for any pair of coinciding eigenvalues $\la_{n,1} = \la_{n,2}$, their ``common'' weight number $\ga_n$ is added. The rigorous definition of the spectral data is provided in Section~\ref{sec:main}. For solving the inverse problem, we use a constructive approach of \cite{Yur02, Bond22-alg}, which reduces the inverse problem to the so-called main equation in the Banach space of bounded infinite sequences. Applying this method, we prove the main theorem (Theorem~\ref{thm:nsc}) on the NSC of the inverse problem solvability in the general non-self-adjoint case. More precisely, using the main equation solution, we construct the functions $\tau_1$ and $\tau_0$ as some series. Then, we prove the convergence of these series in the spaces $L_2(0,1)$ and $W_2^{-1}(0,1)$, respectively, relying on the spectral data asymptotics. Thereafter, we show that the spectral data of the boundary value problems $\mathcal L_1$ and $\mathcal L_2$ with the constructed coefficients $\tau_0$ and $\tau_1$ coincide with the initially given numbers having prescribed asymptotical and structural properties.
Furthermore, we consider the case of a small perturbation of the spectral data and obtain the local solvability and stability of the inverse problem (Theorem~\ref{thm:loc}). Finally, we investigate the self-adjoint case, when the functions $\mathrm{i} \tau_0$ and $\tau_1$ are real-valued. For this case, we obtain simple sufficient conditions of the inverse problem solvability (Theorem~\ref{thm:suff}). The central role in the proofs of Theorem~\ref{thm:suff} belongs to the unique solvability of the main equation (Lemma~\ref{lem:solve}). In order to prove that lemma, we develop a new technique, which has no analogs in previous studies, because the main equation solvability for odd orders has not been investigated before.

The paper is organized as follows. In Section~\ref{sec:main}, the spectral data are defined, the main results are presented, and the proof strategy is briefly described. In Section~\ref{sec:sd}, we study structural and asymptotical properties of the spectral data. In Section~\ref{sec:equ}, we provide the construction of the inverse problem main equation from \cite{Bond22-alg}. Section~\ref{sec:proof} contains the proofs of Theorem~\ref{thm:nsc} on the NSC and of Theorem~\ref{thm:loc} on the local solvability and stability of the inverse problem. In Section~\ref{sec:sa}, the self-adjoint case is considered and Theorem~\ref{thm:suff} on the sufficient conditions of the inverse problem solvability for this case is proved. In Section~\ref{sec:concl}, we briefly summarize our results and discuss the possibility of generalizing them to arbitrary orders.

Throughout this paper, we use the following \textbf{notations}:
\begin{itemize}
    \item The prime $y'(x, \la)$ denotes the derivative with respect to $x$ and the dot $\dot y(x, \la)$, with respect to $\la$.
    \item $\de_{k,j}$ is the Kronecker delta.
    \item In estimates, the same symbol $C$ is used for various positive constants independent of $x$, $\la$, $n$, etc.
    \item If for $\la \to \la_0$
    $$
    A(\la) = \sum_{k = -q}^p a_k(\la - \la_0)^k + o((\la-\la_0)^p),
    $$
    then $A_{\langle k \rangle}(\la_0) := a_k$.
    \item Along with the differential expression $\ell(y) = y''' + (\tau_1(x) y)' + \tau_1(x) y' + \tau_0(x) y$, we consider the differential expressions $\tilde \ell$, $\ell^{\star}$, $\tilde \ell^{\star}$, $\ell^N$, and $\ell^{\dagger}$ of the analogous form but with different coefficients. We agree that, if a symbol $\al$ denotes an object related to $\ell$, then the symbols $\tilde \al$, $\al^{\star}$, $\tilde \al^{\star}$, $\al^N$, and $\al^{\dagger}$  will denote the analogous objects related to $\tilde \ell$, $\ell^{\star}$, $\tilde \ell^{\star}$, $\ell^N$, and $\ell^{\dagger}$, respectively. Note that the quasi-derivatives $y^{[j]}$ for $\ell$, $\tilde \ell$, $\ell^{\star}$, etc., will be defined differently.
\end{itemize}

\section{Main results} \label{sec:main}

We start with the regularization of equation \eqref{eqv}.
Following the approach of Mirzoev and Shkalikov \cite{MS16, MS19}, we understand equation \eqref{eqv} in terms of quasi-derivatives. Let us briefly describe this approach.

The associated matrix $F(x) = [f_{k,j}(x)]_{k,j = 1}^3$ of equation \eqref{eqv} has the form
\begin{equation} \label{defF}
F(x) = \begin{bmatrix}
            0 & 1 & 0 \\
            -(\sigma_0 + \tau_1) & 0 & 1 \\
            0 & (\sigma_0 - \tau_1) & 0
        \end{bmatrix},
\end{equation}
where $\sigma_0$ is any fixed antiderivative of $\tau_0$. Note that $\sigma_0$ can be chosen up to a constant. Nevertheless, the choice of $\sigma_0$ does not influence on the spectral data, which are defined below in this section.

The quasi-derivatives are defined by the formulas
\begin{equation} \label{quasi}
y^{[0]} := y, \quad y^{[k]} := (y^{[k-1]})' - \sum_{j = 1}^k f_{k,j} y^{[j - 1]}, \quad k = 1, 2, 3.
\end{equation}
Thus
$$
y^{[j]} = y^{(j)}, \quad j = 0,1, \quad y^{[2]} = y'' + (\sigma_0 + \tau_1) y, \quad y^{[3]} = (y^{[2]})' - (\sigma_0 - \tau_1) y'.
$$
Define the domain 
$$
\mathcal D_F := \{ y \colon y^{[k]} \in AC[0,1], \, k = 0,1,2 \}.
$$

It follows from the results of \cite{MS19} that, for any $y \in \mathcal D_F$, the differential expression $\ell(y) = y''' + (\tau_1(x) y)' + \tau_1(x) y + \tau_0(x) y$ produces a regular generalized function and $\ell(y) = y^{[3]}$. Therefore, a function $y$ is called a solution of equation \eqref{eqv} if $y \in \mathcal D_F$ and $y^{[3]} = \la y$ a.e. on $(0,1)$.

Denote by $C_k(x, \la)$, $k = 1, 2, 3$, the solutions of equation \eqref{eqv} satisfying the initial conditions
$$
C_k^{[j-1]}(0,\la) = \de_{k,j}, \quad k,j = 1, 2, 3,
$$

Obviously, the functions $C_k(x, \la)$ are uniquely defined as the solutions of the following initial value problems:
\begin{equation} \label{initC}
\begin{bmatrix}
    C_k \\ C_k' \\ C_k^{[2]}
\end{bmatrix}(0,\la) = 
\begin{bmatrix}
    \de_{k,1} \\ \de_{k,2} \\ \de_{k,3}
\end{bmatrix}, \quad
\frac{d}{dx}
\begin{bmatrix}
    C_k \\ C_k' \\ C_k^{[2]}
\end{bmatrix} = 
\begin{bmatrix}
    0 & 1 & 0 \\
    -(\sigma_0 + \tau_1) & 0 & 1 \\
    \la & (\sigma_0 - \tau_1) & 0
\end{bmatrix}
\begin{bmatrix}
    C_k \\ C_k' \\ C_k^{[2]}
\end{bmatrix}, \quad k = 1, 2, 3,
\end{equation}

Consequently, the quasi-derivatives $C_k^{[j-1]}(x, \la)$ are entire in $\la$ for each fixed $x \in [0,1]$, $k,j = 1,2,3$.
Define the entire functions
\begin{align} \label{defDelta1}
& \Delta_{1,1}(\la) := -\begin{vmatrix}
                        C_2(1,\la) & C_3(1,\la) \\
                        C_2'(1,\la) & C_3'(1,\la)
                    \end{vmatrix}, \quad
\Delta_{2,1}(\la) := -\begin{vmatrix}
                        C_1(1,\la) & C_3(1,\la) \\
                        C_1'(1,\la) & C_3'(1,\la)
                    \end{vmatrix}, \\ \label{defDelta2}
& \Delta_{3,1}(\la) := \begin{vmatrix}
                        C_1(1,\la) & C_2(1,\la) \\
                        C_1'(1,\la) & C_2'(1,\la)
                    \end{vmatrix}, \quad 
\Delta_{2,2}(\la) := C_3(1, \la), \quad \Delta_{3,2}(\la) := C_2(1,\la).
\end{align}

Proceed with the definition of the spectral data.
For $k =1, 2$, denote by $\{ \la_{n,k} \}_{n = 1}^{\infty}$ the eigenvalues of the corresponding boundary value problem $\mathcal L_k$. 
One can easily check that the eigenvalues $\{ \la_{n,k} \}_{n = 1}^{\infty}$ coincide with the zeros of the characteristic function $\Delta_{k,k}(\la)$, $k = 1,2$. Throughout the paper, we assume that the zeros of $\Delta_{k,k}(\la)$ are simple for $k = 1, 2$. The case of multiple eigenvalues can be studied by using the ideas of the papers \cite{But07, BSY13}, in which the inverse spectral problems have been investigated for the non-self-adjoint Sturm-Liouville operators. However, for the higher-order differential operators, the case of multiple eigenvalues is much more technically complicated, so we confine ourselves to the case of simple eigenvalues.
Anyway, it is possible that $\la_{n,1} = \la_{p,2}$ for some indices $n,p \ge 1$. In this case, we reorder the eigenvalues so that $n = p$ and define the set 
\begin{equation} \label{defK}
K := \{ n \in \mathbb N \colon \la_{n,1} = \la_{n,2} \}.
\end{equation}
For $n \not \in K$, we have $\la_{n,1} \not \in \{ \la_{p,2} \}_{p =  1}^{\infty}$.

Together with the eigenvalues, we will use additional spectral information. In the inverse problem theory of the higher-order differential operators, the most natural spectral characteristics is the Weyl-Yurko matrix, which generalizes the Weyl functions of the Sturm-Liouville operators (see \cite{Mar77, FY01}). For the higher-order differential operators with regular coefficients, the Weyl-Yurko matrix for the first time was introduced by Yurko \cite{Yur92, Yur02}. The Weyl-Yurko matrix uniquely specifies the differential operator in the general case, while the spectral data used, e.g., in \cite{Leib66, Leib71, Beals85} are sufficient only under some restrictions on the spectra. For differential operators with distribution coefficients, the Weyl-Yurko matrices were used in \cite{Bond21, Bond22-half, Bond22-alg}.

For the third-order equation \eqref{eqv}, the Weyl-Yurko matrix is defined as follows:
\begin{equation} \label{defM}
M(\la) = \begin{bmatrix}
            1 & 0 & 0 \\
            M_{2,1} & 1 & 0 \\
            M_{3,1} & M_{3,2} & 1
         \end{bmatrix}, \quad
 M_{j,k}(\la) = -\frac{\Delta_{j,k}(\la)}{\Delta_{k,k}(\la)}, \quad 1 \le k < j \le 3.
\end{equation}

Clearly, the so-called Weyl functions $M_{j,k}(\la)$, $1 \le k < j \le 3$, are meromorphic in $\la$ and their poles coincide with the eigenvalues $\{ \la_{n,k} \}_{n = 1}^{\infty}$. 
Define the weight numbers
\begin{equation} \label{defbeta}
\beta_{n,k} := -\Res_{\la =\la_{n,k}} M_{k+1,k}(\la) = \frac{\Delta_{k+1,k}(\la_{n,k})}{\dot \Delta_{k,k}(\la_{n,k})}, \quad n \in \mathbb N, \, k = 1,2,
\end{equation}
and $\dot\Delta(\la) = \frac{d}{d\la} \Delta(\la)$. It can be easily shown (see Lemma~\ref{lem:weight}) that $\beta_{n,1} \beta_{n2} = 0$ if and only if $n \in K$. Therefore, for $n \in K$, we put $\la_n := \la_{n,1}$ and define additional weight numbers
\begin{align} \label{defga1}
& \ga_n := \dfrac{\Delta_{3,1}(\la_n)}{\dot \Delta_{1,1}(\la_n)} \quad \text{if} \:\: \beta_{n,1} = 0,  \\ \label{defga2}
& \ga_n := \dfrac{C_1(1,\la_n)}{\dot \Delta_{2,2}(\la_n)} \quad \text{if} \:\: \beta_{n,2} = 0.
\end{align}

If $\be_{n,1} = 0$ and $\be_{n,2} = 0$, then the definitions \eqref{defga1} and \eqref{defga2} coincide with each other. We use the eigenvalues of the two problems $\mathcal L_1$, $\mathcal L_2$ together with the defined weight numbers as the spectral data of the inverse problem.

\begin{ip} \label{ip:main}
Given the spectral data 
$$
\mathfrak S := \bigl( \{ \la_{n,k} \}_{n \in \mathbb N, \, k = 1, 2}, \{ \beta_{n,k} \}_{n \in \mathbb N, \, k = 1, 2}, \{ \ga_n \}_{n \in K}\bigr), 
$$
find the coefficients $\mathcal T := (\tau_0, \tau_1)$ of equation \eqref{eqv}.
\end{ip}

We will write that $\mathcal T \in W$ if $\tau_0 \in W_2^{-1}(0,1)$, $\tau_1 \in L_2(0,1)$ and the zeros of $\Delta_{k,k}(\la)$ are simple for $k = 1, 2$. Along with $\mathcal T$, we consider another coefficient pair $\tilde{\mathcal T} = (\tilde \tau_0, \tilde \tau_1)$ of class $W$. We agree that, if a symbol $\al$ denotes an object related to $\mathcal T$, then the symbol $\tilde \al$ with tilde  will denote the analogous object related to $\tilde {\mathcal T}$. Thus, the uniqueness theorem for Inverse Problem~\ref{ip:main} is formulated as follows.

\begin{thm} \label{thm:uniq}
If $\mathfrak S = \tilde{\mathfrak S}$, then $\mathcal T = \tilde {\mathcal T}$, that is, $\tau_0 = \tilde \tau_0$ in $W_2^{-1}(0,1)$, $\tau_1 = \tilde \tau_1$ in $L_2(0,1)$. Thus, the coefficients $\mathcal T$ of class $W$ are uniquely specified by the spectral data $\mathfrak S$.
\end{thm}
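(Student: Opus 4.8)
The plan is to reduce the uniqueness of the coefficients to the uniqueness of the Weyl--Yurko matrix $M(\la)$, in the spirit of the method of spectral mappings \cite{Yur02, Bond21}. The argument splits into two parts: first, that the spectral data $\mathfrak S$ determine $M(\la)$; second, that $M(\la)$ determines the pair $\mathcal T$. Throughout I work with the matrix solution $C(x,\la) := [C_k^{[j-1]}(x,\la)]_{j,k=1}^3$.

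For the first part, observe that by \eqref{defM} the Weyl functions $M_{2,1}$ and $M_{3,2}$ are meromorphic with simple poles exactly at $\{\la_{n,1}\}$ and $\{\la_{n,2}\}$, and by \eqref{defbeta} their residues are $-\be_{n,1}$ and $-\be_{n,2}$; hence the principal parts of $M_{2,1}$ and $M_{3,2}$ are fixed by $\mathfrak S$. The only remaining entry is $M_{3,1} = -\Delta_{3,1}/\Delta_{1,1}$, whose poles lie among $\{\la_{n,1}\}$. To control its residues I would use an elementary determinant identity among the minors $\Delta_{j,k}$ (the vanishing of the determinant of $[C_k^{[j-1]}]$ with its first row repeated), which gives $\Delta_{2,1}\Delta_{3,2} + \Delta_{3,1}\Delta_{2,2} = 0$ at each zero $\la_{n,1}$ of $\Delta_{1,1}$. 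For $n \notin K$ we have $\Delta_{2,2}(\la_{n,1}) \neq 0$, and this yields $\Res_{\la = \la_{n,1}} M_{3,1} = -\be_{n,1} M_{3,2}(\la_{n,1})$, i.e. the residue is a known multiple of $\be_{n,1}$ through the already-recovered value of $M_{3,2}$ at the point $\la_{n,1}$, which is not a pole of $M_{3,2}$. For $n \in K$ the eigenvalues coincide and, by Lemma~\ref{lem:weight}, one of $\be_{n,1},\be_{n,2}$ vanishes, so the identity degenerates; this is precisely the case in which the extra number $\ga_n$ of \eqref{defga1}--\eqref{defga2} directly supplies the missing residue of $M_{3,1}$. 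Thus all principal parts of the entries of $M$ are determined by $\mathfrak S$, and combining this with the decay of the Weyl functions at infinity (so that each entire difference $M_{j,k} - \tilde M_{j,k}$, which has no poles once the principal parts agree, tends to zero along rays and vanishes by Liouville's theorem), I conclude $M(\la) \equiv \tilde M(\la)$.

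For the second part, introduce the Weyl solutions $\Phi_k(x,\la) = \sum_{j=1}^3 M_{j,k}(\la) C_j(x,\la)$, so that in matrix form $\Phi = C M$, and consider
\[
P(x,\la) := \Phi(x,\la)\,\tilde\Phi(x,\la)^{-1} = C(x,\la)\,M(\la)\tilde M(\la)^{-1}\,\tilde C(x,\la)^{-1}.
\]
Since $M = \tilde M$, this reduces to $P = C\tilde C^{-1}$. Because the trace of the coefficient matrix in \eqref{initC} is zero and $C(0,\la) = I$, we have $\det C(x,\la) \equiv 1$, so $\tilde C^{-1}$ is entire in $\la$ and therefore $P(x,\cdot)$ is entire for each fixed $x$. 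On the other hand, written as $\Phi\tilde\Phi^{-1}$ and using the Birkhoff-type asymptotics of the Weyl solutions from \cite{SS20, Bond23-asympt}, the matrix $P(x,\la)$ tends to the identity as $|\la| \to \infty$ in every sector of the $\la$-plane. An entire matrix function that is bounded and tends to $I$ must equal $I$ by Liouville's theorem, whence $P(x,\la) \equiv I$, i.e. $C(x,\la) = \tilde C(x,\la)$ for all $x,\la$. Finally, comparing the second and third rows of the system \eqref{initC} for $C_k$ and $\tilde C_k$ (whose quasi-derivatives are taken with respect to the two coefficient sets) and using that $[C_k^{[j-1]}]$ is nonsingular, I obtain $\sigma_0 + \tau_1 = \tilde\sigma_0 + \tilde\tau_1$ and $\sigma_0 - \tau_1 = \tilde\sigma_0 - \tilde\tau_1$ a.e., hence $\tau_1 = \tilde\tau_1$ in $L_2(0,1)$ and $\tau_0 = \sigma_0' = \tilde\sigma_0' = \tilde\tau_0$ in $W_2^{-1}(0,1)$.

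The main obstacle I anticipate is twofold. In the first part, the delicate point is the treatment of coinciding eigenvalues ($n \in K$), where the residue of $M_{3,1}$ cannot be read off from $\be_{n,1}$ alone and one must verify, via Lemma~\ref{lem:weight} and the compatibility of \eqref{defga1}--\eqref{defga2}, that $\ga_n$ correctly encodes it. In the second part, the genuinely technical ingredient is establishing $P(x,\la) \to I$ uniformly across all sectors: this rests on precise asymptotics of the Weyl solutions $\Phi_k$ and their quasi-derivatives, which must hold with enough uniformity in $\la$ to legitimize the application of Liouville's theorem to the entire function $P$.
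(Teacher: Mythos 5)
Your argument is correct and follows essentially the same route as the paper: the paper derives Theorem~\ref{thm:uniq} by combining Proposition~\ref{prop:uniq} (uniqueness from the eigenvalues and weight matrices $\mathcal N(\la_{n,k})$, cited from the earlier work) with Lemma~\ref{lem:wm} (which expresses $\mathcal N(\la_{n,k})$ through $\be_{n,k}$ and $\ga_n$), and your two steps --- recovering the principal parts of $M(\la)$ from $\mathfrak S$ and then deducing $\mathcal T$ from $M(\la)$ via $P(x,\la)=\Phi\,\tilde\Phi^{-1}$ and Liouville's theorem --- are precisely the content of those two cited results, unrolled for the third order. The only places where your sketch is thinner than a complete write-up would need are the subcase $n\in K$ with $\be_{n,2}=0\ne\be_{n,1}$, where the residue of $M_{3,1}$ at $\la_n$ equals $-\ga_n-\be_{n,1}M_{3,2}(\la_n)$ rather than $-\ga_n$ (cf.\ Corollary~\ref{cor:M*}), and the uniform estimates away from the poles needed to legitimize both Liouville applications; these are standard but must be checked.
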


Inverse Problem~\ref{ip:main} can be solved constructively by the method of \cite{Bond22-alg}. Namely, the inverse problem can be reduced to the linear main equation of form
\begin{equation} \label{main-intr}
(I - \tilde R(x)) \psi(x) = \tilde \psi(x), \quad x \in [0,1],
\end{equation}
in the space $m$ of infinite bounded sequences. The construction of the main equation is provided in detail in Section~\ref{sec:equ}. Here, we only mention that, for each fixed $x \in [0,1]$, $\psi(x)$ and $\tilde \psi(x)$ are elements of $m$, $\tilde R(x) \colon m \to m$ is a compact linear operator, and $I$ is the unit operator in $m$. The element $\tilde \psi(x)$ and the operator $\tilde R(x)$ are constructed by the spectral data $\mathfrak S$, while $\psi(x)$ is the unknown element, which is related to the coefficients $\mathcal T = (\tau_0, \tau_1)$ of equation \eqref{eqv}. Thus, solving the main equation \eqref{main-intr}, one can find $\psi(x)$ and then use it to obtain the solution of the inverse problem.

Now we are ready to formulate the main theorem on the necessary and sufficient conditions of the inverse problem solvability.

\begin{thm} \label{thm:nsc}
For numbers $\mathfrak S = \bigl( \{ \la_{n,k} \}_{n \in \mathbb N, \, k = 1, 2}, \{ \beta_{n,k} \}_{n \in \mathbb N, \, k = 1, 2}, \{ \ga_n \}_{n \in K}\bigr)$ to be the spectral data corresponding to $\mathcal T = (\tau_0, \tau_1) \in W$, it is necessary and sufficient to fulfill the following conditions:
\begin{enumerate}
    \item The following asymptotic formulas hold for $n \ge 1$, $k = 1, 2$:
    \begin{equation} \label{asymptla}
        \la_{n,k} = (-1)^{k+1} \left( \frac{2\pi}{\sqrt 3} \Bigl( n + \frac{1}{6} - \frac{\theta}{2\pi^2 n} + \frac{\varkappa_n}{n}  \Bigr)\right)^3, \quad \beta_{n,k} = 3 \la_{n,k} \left( 1 + \frac{\varkappa_{n1}}{n}\right), 
    \end{equation}
    where $\theta = \int\limits_0^1 \tau_1(x) \, dx$, $\{ \varkappa_n \}, \{ \varkappa_{n1} \} \in l_2$.
    \item $\la_{n,k} \ne \la_{n_0,k_0}$ if $n \ne n_0$;
    $\beta_{n,1} \beta_{n,2} = 0$ if and only if $n \in K$; $\ga_n \ne 0$ for $n \in K$.
    \item For each fixed $x \in [0,1]$, the operator $(I - \tilde R(x)) \colon m \to m$ of the main equation \eqref{main-intr} has a bounded inverse.
\end{enumerate}
\end{thm}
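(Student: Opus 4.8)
The plan is to split the proof into the necessity and sufficiency directions, treating necessity as essentially a compilation of results from the preceding sections and concentrating the real work on sufficiency.

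\textbf{Necessity.} Assuming that $\mathfrak S$ is the spectral data of some $\mathcal T = (\tau_0, \tau_1) \in W$, conditions~1 and~2 follow directly from the structural and asymptotic analysis of the spectral data carried out in Section~\ref{sec:sd}. The asymptotic formulas \eqref{asymptla} are precisely the spectral data asymptotics established there (building on \cite{Bond23-asympt}), the simplicity and distinctness of the eigenvalues are built into the class $W$, and the statement that $\beta_{n,1}\beta_{n,2} = 0$ holds if and only if $n \in K$, together with $\ga_n \ne 0$ for $n \in K$, is Lemma~\ref{lem:weight}. For condition~3, the main equation \eqref{main-intr} is derived in Section~\ref{sec:equ} from the genuine pair $(\mathcal T, \tilde{\mathcal T})$, so $\psi(x)$ is an honest solution; since $\tilde R(x)$ is compact on $m$, it suffices to prove that $(I - \tilde R(x))$ is injective. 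I would establish this by constructing the main equation with the roles of $\mathcal T$ and $\tilde{\mathcal T}$ interchanged and showing that the two resulting operators are mutually inverse, which yields a bounded inverse in both directions.

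\textbf{Sufficiency.} This is the substantial direction. Starting from numbers $\mathfrak S$ satisfying conditions~1--3, I would first fix a model coefficient pair $\tilde{\mathcal T} \in W$ whose spectral data $\tilde{\mathfrak S}$ share the leading asymptotics \eqref{asymptla} with $\mathfrak S$, and use the given $\mathfrak S$ together with the model to assemble the operator $\tilde R(x)$ and the element $\tilde \psi(x)$ exactly as in the construction of Section~\ref{sec:equ}. The asymptotics of condition~1 guarantee that $\tilde \psi(x) \in m$ and that $\tilde R(x)$ is a compact operator on $m$ depending continuously on $x$. Condition~3 then permits me to solve the main equation and set $\psi(x) := (I - \tilde R(x))^{-1} \tilde \psi(x)$. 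From $\psi(x)$ I would define candidate coefficients $\tau_1$ and $\tau_0$ through the reconstruction series of \cite{Bond22-alg} and prove their convergence in $L_2(0,1)$ and $W_2^{-1}(0,1)$ respectively, the convergence once more being forced by the $l_2$-asymptotics encoded in condition~1.

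The final and hardest step is to verify that the reconstructed $\mathcal T = (\tau_0, \tau_1)$ genuinely has $\mathfrak S$ as its spectral data. I would run the construction of Section~\ref{sec:equ} in reverse: using the main equation I would build the solutions $C_k(x, \la)$ of the reconstructed equation, check that they satisfy the correct initial conditions \eqref{initC} and the expected asymptotics, and deduce that the characteristic functions $\Delta_{k,k}(\la)$ of the new problem have exactly the prescribed zeros $\{ \la_{n,k} \}$ and that the residues in \eqref{defbeta}--\eqref{defga2} reproduce $\{ \beta_{n,k} \}$ and $\{ \ga_n \}$. The main obstacle will be this verification together with the convergence of the $W_2^{-1}$-series for $\tau_0$: the distributional nature of $\tau_0$ prevents any pointwise reasoning, so one must instead track the quasi-derivatives through the limiting process and control the remainder terms uniformly in $x$ and $\la$. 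It is here, rather than in the abstract Fredholm step, that the delicate estimates powered by the asymptotics of condition~1 become unavoidable.
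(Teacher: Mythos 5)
Your overall skeleton coincides with the paper's: necessity is indeed a compilation of Lemmas~\ref{lem:weight} and~\ref{lem:asympt} plus the explicit inverse of $(I - \tilde R(x))$ from \cite{Bond22-alg} (your idea of swapping the roles of $\mathcal T$ and $\tilde{\mathcal T}$ to exhibit the inverse operator is exactly what is done there), and the first three steps of your sufficiency argument (solve the main equation, apply the reconstruction formulas, prove convergence in $L_2$ and $W_2^{-1}$) match Lemma~\ref{lem:series} and the surrounding text.

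The genuine gap is in your final step. You propose to ``run the construction in reverse,'' build the solutions of the reconstructed equation directly from the main equation, and read off that the characteristic functions have the prescribed zeros and residues. This plan fails as stated, for the very reason you half-acknowledge: the candidate Weyl solutions produced by the main equation are infinite series of the form \eqref{defPhiN} with $V$ in place of $V^N$, and their term-by-term quasi-derivatives do not converge, so they cannot be substituted into the equation, nor can one directly evaluate them (or the $C_k$) at $x=1$ to identify the spectrum. ``Tracking the quasi-derivatives through the limiting process'' is not a mechanism; the paper supplies one in two parts that your proposal does not anticipate. First, it truncates the data to $\mathfrak S^N$ (formulas \eqref{defSN}--\eqref{deflaN}), so the main equation becomes a finite system \eqref{mainN}, the series for $\Phi_k^N$ are finite sums, and one can legitimately differentiate; even then, verifying the boundary conditions at $x=1$ (Lemma~\ref{lem:PhiNbc}) is not a routine check but requires constructing auxiliary meromorphic functions $\mathcal G_1,\mathcal G_2$, showing they vanish identically by Liouville's theorem, and extracting the boundary values from their residues. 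Second, to return from $\mathfrak S^N$ to $\mathfrak S$ one needs $\tau_j^N \to \tau_j$ in the right norms (Lemma~\ref{lem:limtau}, which rests on the operator estimate $\| \tilde R(x) - \tilde R^N(x)\|_{m \to m} = o(N^{-1})$) \emph{and} continuity of the spectral data with respect to the coefficients (Lemma~\ref{lem:limsd}, proved via Rouch\'e's theorem applied to $\Delta_{k,k}^N \to \Delta_{k,k}$). Without this approximation scheme, or an equivalent substitute, the sufficiency direction is not closed.
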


The first condition of Theorem~\ref{thm:nsc} means that the values $\{ \la_{n,k} \}_{n \in\mathbb N, \, k = 1, 2}$ can be numbered so that the asymptotic formulas \eqref{asymptla} are valid. Such numbering is non-unique, since the asymptotics determines only the order of sufficiently large eigenvalues. We fix a numbering satisfying this property and also the condition $n = p$ if $\la_{n,1} = \la_{p,2}$. This numbering specifies the set $K$ by \eqref{defK}. By virtue of the asymptotics \eqref{asymptla}, the values $\{ \la_{n,k} \}$ are separated for sufficiently large values of $n$, so the set $K$ is finite and the assumption of simplicity automatically holds for large eigenvalues. Therefore, this assumption is not very restrictive.

Let us briefly describe the proof strategy of Theorem~\ref{thm:nsc}.
The proof of the necessity consists in the study of structural and asymptotical properties of the spectral data in Section~\ref{sec:sd}. The necessity of the third condition has been proved in \cite{Bond22-alg}. The proof of the sufficiency is based on the constructive solution of Inverse Problem~\ref{ip:main}. Since the operator $(I - \tilde R(x))$ is invertible, then the main equation \eqref{main-intr} has the unique solution $\psi(x) \in m$. Using the entries of $\psi(x)$, we construct the functions $\tau_0$ and $\tau_1$ by the reconstruction formulas derived in \cite{Bond22-alg}. We show that $\tau_0 \in W_2^{-1}(0,1)$ and $\tau_1 \in L_2(0,1)$. Finally, we prove that the initially given numbers $\mathfrak S$ are the spectral data of equation \eqref{eqv} with the constructed coefficients $\tau_0$ and $\tau_1$. The last step is the most technically difficult. We need to check that some constructed series are solutions of equation~\eqref{eqv} and, moreover, fulfill certain boundary conditions. But the derivatives of these series do not converge and cannot be directly substituted into equation \eqref{eqv}. In order to overcome these difficulties, we use the approximation approach, which is described in detail in Section~\ref{sec:proof}.

Note that Theorem~\ref{thm:nsc} contains the condition of the main equation unique solvability (condition 3). Conditions of such kind usually required for non-self-adjoint operators. For example, the similar condition appears in the study of the non-self-adjoint Sturm-Liouville operators (see \cite[Theorem~1.6.3]{FY01}). Nevertheless, it is important to investigate special cases, when condition~3 can be deduced from some easy-to-verify conditions. In this paper, we consider the two such cases:
\begin{enumerate}
    \item Small perturbation of the spectral data.
    \item The self-adjoint case.
\end{enumerate}

For the case of a small perturbation, we obtain the following theorem on the local solvability and stability of Inverse Problem~\ref{ip:main}.

\begin{thm} \label{thm:loc}
Suppose that $\tilde {\mathcal T} = (\tilde \tau_0, \tilde \tau_1) \in W$ and $\tilde K = \varnothing$. Then, there exists $\eps > 0$ (which depends on $\tilde {\mathcal T}$) such that, for any complex numbers $\mathfrak S := \{ \la_{n,k}, \be_{n,k} \}_{n \in \mathbb N, \, k = 1, 2}$ satisfying the inequality
\begin{equation} \label{locsd}
d(\mathfrak S, \tilde{\mathfrak S}) := \sqrt{\sum_{n = 1}^{\infty} \sum_{k = 1, 2} \bigl( n^{-1} |\la_{n,k} - \tilde \la_{n,k}| + n^{-2} |\be_{n,k} - \tilde \be_{n,k}| \bigr)^2} \le \eps,
\end{equation}
there exist coefficients $\mathcal T = (\tau_0, \tau_1) \in W$ such that $\mathfrak S$ are the spectral data of $\mathcal T$. Moreover,
\begin{equation} \label{loctau}
 \| \tau_1 - \tilde \tau_1 \|_{L_2(0,1)} \le C d(\mathfrak S, \tilde{\mathfrak S}), \quad
 \| \tau_0 - \tilde \tau_0 \|_{W_2^{-1}(0,1)} \le C d(\mathfrak S, \tilde{\mathfrak S}),
\end{equation}
where the constant $C > 0$ depends only on $\tilde {\mathcal T}$ and $\eps$.
\end{thm}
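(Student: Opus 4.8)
The plan is to deduce Theorem~\ref{thm:loc} from the necessary and sufficient conditions of Theorem~\ref{thm:nsc}, taking the given pair $\tilde{\mathcal T}$ as the model problem used to build the main equation \eqref{main-intr}. Thus it suffices to show that, once $d(\mathfrak S, \tilde{\mathfrak S}) \le \eps$ with $\eps$ small enough, the perturbed data $\mathfrak S$ satisfies all three conditions of Theorem~\ref{thm:nsc}; the desired coefficients $\mathcal T$ are then furnished by the sufficiency part, and the stability estimates \eqref{loctau} follow by tracking the perturbation through the main equation and the reconstruction formulas. The crucial simplification is that when $\mathfrak S = \tilde{\mathfrak S}$ the operator $\tilde R(x)$ vanishes identically, so it is small when $\mathfrak S$ is close to $\tilde{\mathfrak S}$.

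First I would verify conditions 1 and 2. Since the metric $d$ controls $\{ n^{-1}(\la_{n,k} - \tilde\la_{n,k}) \}$ and $\{ n^{-2}(\be_{n,k} - \tilde\be_{n,k}) \}$ in $l_2$, and $\tilde{\mathfrak S}$ already obeys \eqref{asymptla}, a direct computation with the cube-root expansion shows that $\mathfrak S$ obeys \eqref{asymptla} as well, with new $l_2$ remainders $\{\varkappa_n\}, \{\varkappa_{n1}\}$ that are $O(d)$-close to the reference ones. Moreover the $l_2$-weight on $\{ n^{-1}(\la_{n,k} - \tilde\la_{n,k}) \}$ forces the coefficient of $n^{-1}$ in $\la_{n,k}^{1/3}$ to be unchanged, so that $\theta = \tilde\theta$ automatically; this is the consistency constraint $\int_0^1 \tau_1 = \int_0^1 \tilde\tau_1$ and creates no obstruction. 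For condition 2, I use $\tilde K = \varnothing$: every $\tilde\be_{n,k}$ is nonzero, and for large $n$ the asymptotics \eqref{asymptla} give $\be_{n,k} = 3\la_{n,k}(1 + O(n^{-1})) \ne 0$ and separate the eigenvalues, while for the finitely many remaining indices, continuity from the nonvanishing, pairwise distinct reference quantities keeps $\be_{n,k} \ne 0$ and the $\la_{n,k}$ distinct once $\eps$ is small. Hence $K = \varnothing$, in agreement with the absence of the numbers $\ga_n$ in the perturbed data, and condition 2 holds.

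The heart of the argument is condition 3. Relying on the explicit structure of $\tilde R(x)$ from Section~\ref{sec:equ} together with the spectral data asymptotics, I would establish the uniform bound
$$
\| \tilde R(x) \|_{m \to m} \le C \, d(\mathfrak S, \tilde{\mathfrak S}), \qquad x \in [0,1],
$$
with $C$ depending only on $\tilde{\mathcal T}$; this uses that each entry of $\tilde R(x)$ carries one factor of a spectral data difference and that the accompanying products of model solutions are uniformly bounded in $x$. Choosing $\eps > 0$ so that $C\eps < 1$, the Neumann series yields the invertibility of $I - \tilde R(x)$ for every $x \in [0,1]$, together with the uniform estimate $\| (I - \tilde R(x))^{-1} \|_{m \to m} \le (1 - C\eps)^{-1}$. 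All three conditions of Theorem~\ref{thm:nsc} now hold, so there exists $\mathcal T = (\tau_0, \tau_1) \in W$ whose spectral data equal $\mathfrak S$.

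It remains to prove the stability estimates \eqref{loctau}. Since $\psi(x) = (I - \tilde R(x))^{-1}\tilde\psi(x)$ and $\psi(x) = \tilde\psi(x)$ when there is no perturbation, I would write $\psi(x) - \tilde\psi(x) = (I - \tilde R(x))^{-1}\tilde R(x)\tilde\psi(x)$ and combine the previous bounds with the uniform boundedness of $\tilde\psi(x)$ to obtain $\sup_{x \in [0,1]} \| \psi(x) - \tilde\psi(x) \|_m \le C \, d(\mathfrak S, \tilde{\mathfrak S})$. Feeding this into the reconstruction formulas of \cite{Bond22-alg}, which express $\tau_1 - \tilde\tau_1$ and $\tau_0 - \tilde\tau_0$ as series in the entries of $\psi - \tilde\psi$ and in the spectral data differences, and estimating these series in $L_2(0,1)$ and $W_2^{-1}(0,1)$ exactly as in the convergence argument for Theorem~\ref{thm:nsc}, yields \eqref{loctau}. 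I expect the main obstacle to be precisely the two norm estimates that must be made uniform in $x$: the bound on $\| \tilde R(x) \|_{m \to m}$ and the Lipschitz continuity of the reconstruction map in the $L_2$ and $W_2^{-1}$ norms. Both require careful use of the $l_2$-asymptotics of the spectral data to absorb the $n$-dependent weights in the metric $d$, and the $W_2^{-1}$ estimate additionally demands showing that the series for $\tau_0$ converges after one integration, as in the sufficiency proof.
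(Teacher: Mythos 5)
Your proposal is correct and follows essentially the same route as the paper: verify conditions 1--2 of Theorem~\ref{thm:nsc} from the asymptotics and the smallness of the perturbation (using $\tilde K = \varnothing$ to keep $K = \varnothing$), bound $\| \tilde R(x) \|_{m \to m} \le C\, d(\mathfrak S, \tilde{\mathfrak S})$ via the entrywise estimate $|\tilde R_{v_0,v}(x)| \le C\xi_n/(|n-n_0|+1)$ and the comparison of $d$ with $\{ n\xi_n \}$ in $l_2$, invert $I - \tilde R(x)$ by a Neumann series, and then obtain \eqref{loctau} by rerunning the series estimates of Lemma~\ref{lem:series}. The paper's proof is exactly this, only stated more tersely.
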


The case $\tilde K \ne \varnothing$ requires a separate investigation, because the equal eigenvalues $\tilde \la_{n,1} = \tilde \la_{n,2}$ can split under a small perturbation of the spectra. In order to prove Theorem~\ref{thm:loc}, we show that \eqref{locsd} implies the conditions~1--3 of Theorem~\ref{thm:nsc}.

Now proceed to the self-adjoint case.
Suppose that the functions $\mathrm{i} \tau_0(x)$ and $\tau_1(x)$ are real-valued. Then, the eigenvalues of the boundary value problems $\mathcal L_k$, $k = 1, 2$, can be numbered so that
\begin{equation} \label{sala}
\la_{n,1} = -\overline{\la_{n,2}}, \quad \beta_{n,1} = -\overline{\beta_{n,2}}, \quad n \in \mathbb N.
\end{equation}

Strictly speaking, the differential expression $\ell(y) = y''' + (\tau_1(x) y)' + \tau_1(x) y' + \tau_0(x) y$ becomes self-adjoint when multiplied by $\mathrm{i}$. Then the boundary value problems $\mathcal L_1$ and $\mathcal L_2$ become adjoint to each other, so their spectra are complex conjugate to each other. However, in order to preserve similarity of notations with the previous studies \cite{Yur02, Bond22-alg}, we do not multiply by $\mathrm{i}$. Thus, the spectra of $\mathcal L_1$ and $\mathcal L_2$ are symmetric to each other with respect to the imaginary axis.

In view of \eqref{sala}, it is sufficient to consider only the spectrum $\{ \la_n \}_{n = 1}^{\infty}$, $\la_n := \la_{n,1}$, $\beta_n := \beta_{n,1}$. As before, assume that the zeros $\{ \la_n \}_{n = 1}^{\infty}$ of $\Delta_{1,1}(\la)$ are simple. Since the eigenvalues are numbered according to \eqref{sala}, it can happen that $\la_{n,1} = \la_{p,2}$ for $n \ne p$, that is, $\la_n = -\overline{\la_p}$. However, we are going to provide sufficient conditions of the inverse problem solvability, and we additionally assume that $\la_n \ne -\overline{\la_p}$ for $n \ne p$. Anyway, it is possible that $\la_n = -\overline{\la_n}$ for some $n \in \mathbb N$, then $\be_n = 0$ and $\ga_n > 0$, where $\ga_n$ can be defined by either \eqref{defga1} or \eqref{defga2}. Obviously $K = \{ n \in \mathbb N \colon \la_n = -\overline{\la_n} \}$. Thus, define the spectral data
\begin{equation} \label{defS+}
\mathfrak S^+ := \bigl( \{ \la_n \}_{n = 1}^{\infty}, \{ \beta_n \}_{n \in \mathbb N \setminus K}, \{ \ga_n \}_{n \in K} \bigr).
\end{equation}

We will say that $\mathcal T \in W^+$ if $\tau_1 \in L_2(0,1)$, $\tau_0 \in W_2^{-1}(0,1)$, the functions $\mathrm{i} \tau_0(x)$ and $\tau_1(x)$ are real-valued. 

\begin{thm} \label{thm:suff}
Let $\mathfrak S^+$ be arbitrary numbers of form \eqref{defS+} satisfying the conditions:
\begin{enumerate}
    \item The numbers $\{ \la_n \}_{n \in \mathbb N}$ and $\{ \beta_n \}_{n \in \mathbb N \setminus K}$ satisfy the asymptotics \eqref{asymptla} for $k = 1$.
    \item $\la_n \ne \la_p$ and $\la_n \ne -\overline{\la_p}$ if $n \ne p$; $\beta_n \ne 0$ for $n \in \mathbb N \setminus K$.
    \item $\mbox{Re} \, \la_n \ge 0$ for $n \in \mathbb N$ and $\ga_n > 0$ for $n \in K$.
\end{enumerate}
Then there exist a unique coefficient pair $\mathcal T = (\tau_0, \tau_1) \in W^+$ having the spectral data $\mathfrak S^+$.
\end{thm}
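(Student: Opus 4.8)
The plan is to derive Theorem~\ref{thm:suff} from the general criterion of Theorem~\ref{thm:nsc}. First I would use the symmetry relations \eqref{sala} to extend the reduced data $\mathfrak S^+$ to full spectral data $\mathfrak S$: set $\la_{n,1} := \la_n$, $\la_{n,2} := -\overline{\la_n}$, and $\beta_{n,1} := \beta_n$, $\beta_{n,2} := -\overline{\beta_n}$ for $n \in \mathbb N \setminus K$, put $\beta_{n,1} = \beta_{n,2} = 0$ for $n \in K$, and retain $\{ \ga_n \}_{n \in K}$. Condition~2 of Theorem~\ref{thm:nsc} then follows directly from hypotheses~2 and~3: the inequalities $\la_n \ne \la_p$ and $\la_n \ne -\overline{\la_p}$ for $n \ne p$ give pairwise distinctness of all $\la_{n,k}$ with $n \ne n_0$; for $n \in \mathbb N \setminus K$ the identity $\beta_{n,1}\beta_{n,2} = -|\beta_n|^2 \ne 0$ together with the vanishing for $n \in K$ yields $\beta_{n,1}\beta_{n,2} = 0$ precisely when $n \in K$; and $\ga_n > 0$, in particular $\ga_n \ne 0$. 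Condition~1 (the asymptotics for $k = 1, 2$) follows from hypothesis~1 by applying the map $\la \mapsto -\overline{\la}$, $\beta \mapsto -\overline{\beta}$ to the $k = 1$ asymptotics; here one uses that $\theta$ is real in the self-adjoint setting, so that $\{ \overline{\varkappa_n} \}, \{ \overline{\varkappa_{n1}} \} \in l_2$ reproduce \eqref{asymptla} for $k = 2$.

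The decisive and genuinely new step is Condition~3 of Theorem~\ref{thm:nsc}: the bounded invertibility of $(I - \tilde R(x))$ for every fixed $x \in [0,1]$, which is the content of Lemma~\ref{lem:solve}. Since $\tilde R(x)$ is compact on $m$, by the Fredholm alternative it suffices to show that $(I - \tilde R(x))\eta = 0$ forces $\eta = 0$ in $m$. My approach would be to exploit the self-adjoint structure: because $\mathrm i \ell$ is self-adjoint and $\mathcal L_1$, $\mathcal L_2$ are mutually adjoint, the reduced data satisfying \eqref{sala} induce a conjugation symmetry relating the $k = 1$ and $k = 2$ blocks of $\tilde R(x)$, while the weights enter with definite signs ($\ga_n > 0$, $\beta_n \ne 0$). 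From a hypothetical nonzero $\eta$ I would assemble an associated sesquilinear form and show, using the symmetry of the two spectra together with $\mathrm{Re}\,\la_n \ge 0$ and the positivity of the $\ga_n$, that this form must be simultaneously nonnegative and nonpositive, whence $\eta = 0$. This is the odd-order analogue of the classical proof that the Gelfand--Levitan operator $I + \tilde F$ is positive definite in the self-adjoint Sturm--Liouville case; I expect the construction of the correct form to be the main obstacle, since the two-spectrum coupling and the odd order have no precedent in the literature.

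Once Condition~3 holds, Theorem~\ref{thm:nsc} produces coefficients $\mathcal T = (\tau_0, \tau_1) \in W$ whose spectral data equal the symmetrized $\mathfrak S$. It remains to verify $\mathcal T \in W^+$, i.e.\ that $\mathrm i \tau_0$ and $\tau_1$ are real-valued. For this I would introduce the conjugate coefficient pair $\mathcal T^{\dagger}$ associated with the expression $\ell^{\dagger}$ that encodes the self-adjointness symmetry, and check that its spectral data again coincide with $\mathfrak S$; the relations $\la_{n,1} = -\overline{\la_{n,2}}$, $\beta_{n,1} = -\overline{\beta_{n,2}}$ are exactly what make $\mathcal T^{\dagger}$ yield the same data. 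By the uniqueness Theorem~\ref{thm:uniq}, $\mathcal T^{\dagger} = \mathcal T$, which is equivalent to $\mathrm i \tau_0$ and $\tau_1$ being real, so $\mathcal T \in W^+$. Finally, uniqueness of $\mathcal T$ within $W^+$ is immediate from Theorem~\ref{thm:uniq}. Thus the entire argument reduces to Lemma~\ref{lem:solve}, where essentially all of the difficulty is concentrated.
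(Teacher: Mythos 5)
Your outer scaffolding coincides with the paper's: the symmetrization \eqref{defsd} of $\mathfrak S^+$ into full data $\mathfrak S$, the verification of conditions 1--2 of Theorem~\ref{thm:nsc}, and the final step showing $\mathcal T \in W^+$ by comparing $\mathcal T$ with $\mathcal T^{\dagger} = (-\overline{\tau_0}, \overline{\tau_1})$ via \eqref{sd+} and invoking Theorem~\ref{thm:uniq} are all exactly what the paper does. However, there is a genuine gap at the point you yourself identify as the crux: you do not prove Lemma~\ref{lem:solve}, you only conjecture that some sesquilinear form built from a hypothetical nonzero solution of the homogeneous equation will turn out to be simultaneously nonnegative and nonpositive. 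Your stated model --- the positive-definiteness of the Gelfand--Levitan operator in the self-adjoint Sturm--Liouville case --- is precisely the mechanism the paper says is \emph{unavailable} here: for odd orders there is no ``middle'' self-adjoint boundary value problem, and no operator positivity is established.

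What the paper actually does is different in kind and is the content you are missing. From the components $z_{n,k,\eps}(x)$ of a putative homogeneous solution it builds the meromorphic functions $Z_{k_0}(x,\la)$ of \eqref{defZ}, interpolating the $z$'s at the eigenvalues, and then forms the two products $\mathcal B_1(x,\la) = -Z_1(x,\la)\,\overline{Z_3(x,-\overline{\la})}$ and $\mathcal B_2(x,\la) = -Z_2(x,\la)\,\overline{Z_2(x,-\overline{\la})}$, whose residues at $\la_{n,1,0}$ and $\la_{n,2,0}$ produce the quantities $r_n(x) = \be_{n,1,0} z_{n,1,0}(x)\overline{z_{n,2,0}(x)}$ and $\ga_n |z_n(x)|^2$ with the correct signs. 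The two opposite inequalities then come from two \emph{different contours}: integrating $\mathcal B_1$ over the full circle $\mathcal C_R$ and letting $R \to \iy$ gives $\sum_{n \notin K} r_n(x) + \sum_{n \in K}\ga_n|z_n(x)|^2 = 0$, hence $\sum r_n(x) \le 0$ by $\ga_n > 0$; integrating $\mathcal B_2$ over the half-circle $\mathcal C_R^0$ closed along the imaginary axis --- where the hypothesis $\mbox{Re}\,\la_n \ge 0$ is used to guarantee that exactly the poles $\{\la_{n,1,0}\}$ are enclosed while $\{\la_{n,2,0}\} = \{-\overline{\la_{n,1,0}}\}$ are not --- gives $\sum r_n(x) = \frac{1}{2\pi}\int_{-\iy}^{\iy}|Z_2(x,\mathrm i\tau)|^2\,d\tau \ge 0$. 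Forcing $Z_2 \equiv 0$ on the imaginary axis and hence everywhere kills $z_{n,2,0}$ and $z_{n,1,\eps}$, the positivity of $\ga_n$ then kills $z_n$ for $n \in K$, and a separate Liouville argument with the infinite product $P(\la)$ satisfying the lower bound \eqref{estP} is needed to conclude $Z_3 \equiv 0$ and dispose of the remaining components $z_{n,2,1}$. None of these constructions (the cross-pairing of $Z_1$ with $Z_3$ and of $Z_2$ with itself under $\la \mapsto -\overline{\la}$, the half-plane contour exploiting the separation of the two spectra, the infinite-product step) is present or implied in your sketch, so the proposal as written does not establish the theorem.
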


It is remarkable that Theorem~\ref{thm:suff} does not require the solvability of the main equation. Its unique solvability is proved in Section~\ref{sec:sa} by using the conditions 1--3 of Theorem~\ref{thm:suff}. The proof is based on the construction of special meromorphic functions, on the contour integration in the $\la$-plane, and on the application of the Residue Theorem. We emphasize that this construction is novel and different from the case of even-order differential operators. For even orders, the number of the boundary value problems $\mathcal L_k$ is odd, and the middle problem is self-adjoint. Consequently, the ideas analogous to the study of the second-order operators can be applied. For odd orders, there is no such ``middle'' self-adjoint problem. Therefore, we impose the condition $\mbox{Re} \, \la_n \ge 0$ for the two spectra to be separated by the imaginary axis. This separation plays an important role in the proof.

\section{Spectral data properties} \label{sec:sd}

In this section, we study structural and asymptotical properties of the spectral data $\mathfrak S$. In addition, we deduce the uniqueness theorem (Theorem~\ref{thm:uniq}) from the results of \cite{Bond22-alg}. Furthermore, we define the Weyl solutions $\Phi_k(x, \la)$ and $\Phi_k^{\star}(x, \la)$, $k = 1, 2, 3$, which play an important role in the investigation of the inverse problem. The relationship between the Weyl solutions and the spectral data is established.

Consider the boundary value problems $\mathcal L_k$, $k =1, 2$, defined in Section~\ref{sec:intr} for equation \eqref{eqv} with the coefficients $\mathcal T = (\tau_0, \tau_1) \in W$.

\begin{lem} \label{lem:weight}
$\beta_{n,1} \beta_{n,2}= 0$ if and only if $n \in K$. For $n \in K$, $\ga_n \ne 0$. If $\beta_{n,1} = \beta_{n,2}= 0$, then the definitions \eqref{defga1} and \eqref{defga2} coincide with each other.
\end{lem}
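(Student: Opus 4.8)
The plan is to reduce the entire lemma to two elementary facts about the matrix solution of \eqref{initC} at $x=1$. First I would introduce the fundamental matrix
\[
\mathcal C(\la) := \begin{bmatrix} C_1 & C_2 & C_3 \\ C_1' & C_2' & C_3' \\ C_1^{[2]} & C_2^{[2]} & C_3^{[2]} \end{bmatrix}(1,\la).
\]
Since the coefficient matrix in \eqref{initC} has zero trace, Liouville's formula shows that the Wronskian $\det \mathcal C$ is constant in $x$; evaluating at $x=0$ through the initial conditions $C_k^{[j-1]}(0,\la)=\de_{k,j}$ gives $\det\mathcal C(\la)\equiv 1$. Two consequences will be used repeatedly: the top $2\times 3$ block has rank $2$ at every $\la$, so the three minors $\Delta_{1,1},\Delta_{2,1},\Delta_{3,1}$ never vanish simultaneously; and its first row $(C_1,C_2,C_3)(1,\la)$ never vanishes entirely. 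The second ingredient is the linear dependence of the three columns of the top block, regarded as vectors in $\mathbb C^2$, which in the notation of \eqref{defDelta1}--\eqref{defDelta2} reads
\[
\Delta_{1,1}(\la)\begin{bmatrix} C_1 \\ C_1'\end{bmatrix}(1,\la) = \Delta_{2,1}(\la)\begin{bmatrix} C_2 \\ C_2'\end{bmatrix}(1,\la) + \Delta_{3,1}(\la)\begin{bmatrix} C_3 \\ C_3'\end{bmatrix}(1,\la).
\]

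For the equivalence, I would first note that, because the zeros of $\Delta_{k,k}$ are simple, $\be_{n,1}=0$ iff $\Delta_{2,1}(\la_{n,1})=0$ and $\be_{n,2}=0$ iff $C_2(1,\la_{n,2})=\Delta_{3,2}(\la_{n,2})=0$. If $n\in K$, then $\Delta_{1,1}(\la_n)=0$ and $\Delta_{2,2}(\la_n)=C_3(1,\la_n)=0$; the first coordinate of the dependence relation then yields $\Delta_{2,1}(\la_n)C_2(1,\la_n)=0$, hence $\be_{n,1}\be_{n,2}=0$. Conversely, if $\be_{n,2}=0$ then $C_2(1,\la_{n,2})=C_3(1,\la_{n,2})=0$, so the top row of the matrix defining $\Delta_{1,1}$ vanishes and $\Delta_{1,1}(\la_{n,2})=0$. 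If instead $\be_{n,1}=0$, then $\Delta_{1,1}(\la_{n,1})=\Delta_{2,1}(\la_{n,1})=0$, so the rank remark gives $\Delta_{3,1}(\la_{n,1})\neq0$, and the dependence relation forces $C_3(1,\la_{n,1})=C_3'(1,\la_{n,1})=0$. In either case $\la_{n,k}$ is a common eigenvalue of $\mathcal L_1$ and $\mathcal L_2$; by the indexing convention adopted after \eqref{defK} this forces $n\in K$, completing the equivalence.

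For the nonvanishing of $\ga_n$ I would split into the two defining cases. If $\be_{n,1}=0$, then (as above) $\Delta_{1,1}(\la_n)=\Delta_{2,1}(\la_n)=0$, so $\Delta_{3,1}(\la_n)\neq0$ by the rank remark, while $\dot\Delta_{1,1}(\la_n)\neq0$ by simplicity; hence $\ga_n$ from \eqref{defga1} is nonzero. If $\be_{n,2}=0$, then $C_2(1,\la_n)=C_3(1,\la_n)=0$, and $C_1(1,\la_n)\neq0$, for otherwise the whole first row of $\mathcal C$ would vanish, contradicting $\det\mathcal C\equiv1$; together with $\dot\Delta_{2,2}(\la_n)\neq0$ this makes $\ga_n$ from \eqref{defga2} nonzero.

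Finally, for the coincidence of the two definitions when $\be_{n,1}=\be_{n,2}=0$, the relations above give $C_2(1,\la_n)=C_3(1,\la_n)=C_3'(1,\la_n)=0$. Then $\Delta_{3,1}(\la_n)=C_1(1,\la_n)C_2'(1,\la_n)$, and differentiating $\Delta_{1,1}=-(C_2C_3'-C_3C_2')(1,\cdot)$ in $\la$ while discarding every term carrying a vanishing factor leaves $\dot\Delta_{1,1}(\la_n)=C_2'(1,\la_n)\dot C_3(1,\la_n)$. Since $\Delta_{3,1}(\la_n)\neq0$ forces $C_2'(1,\la_n)\neq0$, dividing shows that both \eqref{defga1} and \eqref{defga2} reduce to $C_1(1,\la_n)/\dot C_3(1,\la_n)$. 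I expect this last step — keeping track of exactly which terms survive the differentiation, and justifying the cancellation of $C_2'(1,\la_n)$ — to be the only delicate point; everything else follows mechanically from the constancy of the Wronskian and the single linear-dependence identity.
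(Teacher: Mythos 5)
Your proof is correct and follows essentially the same route as the paper: exploiting the unit Wronskian of the fundamental matrix at $x=1$ and the resulting cofactor/linear-dependence identities among $\Delta_{1,1}$, $\Delta_{2,1}$, $\Delta_{3,1}$, then splitting into the cases $C_2(1,\la_n)=0$ and $C_3'(1,\la_n)=0$ and computing $\dot\Delta_{1,1}(\la_n)=C_2'(1,\la_n)\dot C_3(1,\la_n)$ for the coincidence of \eqref{defga1} and \eqref{defga2}. The only difference is cosmetic: you package the argument through an explicit rank-$2$ remark and a single dependence relation, and you write out the converse direction ($n\notin K$ implies $\beta_{n,1}\beta_{n,2}\ne 0$) that the paper dismisses with ``can be shown similarly.''
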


\begin{proof}
Suppose that $\la_{n,1} = \la_{n,2} =: \la_n$. Then $\Delta_{1,1}(\la_n) = \Delta_{2,2}(\la_n) = 0$. Hence, the relations \eqref{defDelta1}-\eqref{defDelta2} imply $C_3(1,\la_n) = 0$ and $C_2(1,\la_n) C_3'(1,\la_n) = 0$. Consider the two possible cases:

\smallskip

\textsc{Case 1.} $C_2(1,\la_n) = 0$, that is, $\Delta_{3,2}(\la_n) = 0$. Therefore, the Weyl function $M_{3,2}(\la) = -\dfrac{\Delta_{3,2}(\la)}{\Delta_{2,2}(\la)}$ is analytic at $\la = \la_n$, so $\beta_{n,2} = 0$. Note that, in this case, $C_1(1, \la_n) \ne 0$. Otherwise $C_1(1,\la_n) = C_2(1,\la_n) = C_3(1,\la_n) = 0$, which is impossible, since the functions $C_k(x, \la_n)$, $k =1, 2, 3$, form a fundamental solution system of equation \eqref{eqv}. Hence $\ga_n = \dfrac{C_1(1,\la_n)}{\dot \Delta_{2,2}(\la_n)} \ne 0$.

\smallskip

\textsc{Case 2.} $C_3'(1,\la_n) = 0$ implies that $\Delta_{2,1}(\la_n) = 0$, so the Weyl function $M_{2,1}(\la) = -\dfrac{\Delta_{2,1}(\la)}{\Delta_{1,1}(\la)}$ is analytic at $\la = \la_n$. Hence $\beta_{n,1} = 0$. Then
$$
\begin{vmatrix}
    C_1 & C_2 & C_3 \\
    C_1' & C_2' & C_3' \\
    C_1^{[2]} & C_2^{[2]} & C_3^{[2]} 
\end{vmatrix}(1,\la_n) = -C_1^{[2]}(1,\la_n) \Delta_{1,1}(\la_n) + C_2^{[2]}(1,\la_n) \Delta_{2,2}(\la_n) + C_3^{[2]}(1,\la_n) \Delta_{3,1}(\la_n)= 1. 
$$
Since $\Delta_{1,1}(\la_n) = \Delta_{2,1}(\la_n) = 0$, then $\Delta_{3,1}(\la_n) \ne 0$, so $\ga_n = \dfrac{\Delta_{3,1}(\la_n)}{\dot \Delta_{1,1}(\la_n)} \ne 0$. If additionally $\beta_{n,2}= 0$, then $C_2(1,\la_n) = 0$. Therefore,
$$
\Delta_{3,1}(\la_n) = C_1(1,\la_n) C_2'(1,\la_n), \quad 
\dot \Delta_{1,1}(\la_n) = \dot C_3(1,\la_n) C_2'(1,\la_n).
$$
Consequently, in this case, $\dfrac{\Delta_{3,1}(\la_n)}{\dot \Delta_{1,1}(\la_n)} = \dfrac{C_1(1,\la_n)}{\dot \Delta_{2,2}(\la_n)}$, that is, the definitions \eqref{defga1} and \eqref{defga2} of $\ga_n$ are equivalent to each other.

\smallskip

It can be shown similarly that $\beta_{n,1} \ne 0$ and $\beta_{n,2} \ne 0$ if $n \not\in K$.
\end{proof}

Recall that, under our assumptions, the eigenvalues $\{ \la_{n,k} \}$ are simple poles of the Weyl-Yurko meromorphic matrix function.
Consider the Laurent series
$$
M(\la) = \frac{M_{\langle -1 \rangle}(\la_{n,k})}{\la - \la_{n,k}} + M_{\langle 0 \rangle}(\la_{n,k}) + M_{\langle 1 \rangle}(\la_{n,k})(\la - \la_{n,k}) + \dots,
$$
and define the weight matrices
\begin{equation} \label{defN}
\mathcal N(\la_{n,k}) := (M_{\langle 0 \rangle}(\la_{n,k}))^{-1} M_{\langle -1\rangle}(\la_{n,k}).
\end{equation}

\begin{prop}[\cite{Bond22-alg}]\label{prop:uniq}  
Suppose that $\mathcal T = (\tau_0, \tau_1)$ and $\tilde {\mathcal T} = (\tilde \tau_0, \tilde \tau_1)$ belong to $W$ and $\la_{n,k} = \tilde \la_{n,k}$, $\mathcal N(\la_{n,k}) = \tilde{\mathcal N}(\la_{n,k})$ for all $n \in \mathbb N$, $k = 1, 2$. Then $\tau_0 = \tilde \tau_0$ in $W_2^{-1}(0,1)$ and $\tau_1 = \tilde \tau_1$ in $L_2(0,1)$. Thus, the spectral data $\{ \la_{n,k}, \mathcal N(\la_{n,k}) \}_{n \in \mathbb N, \, k = 1, 2}$ uniquely specify the coefficients $\tau_0$ and $\tau_1$ of equation \eqref{eqv}. 
\end{prop}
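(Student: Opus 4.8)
The plan is to prove Proposition~\ref{prop:uniq} by the method of spectral mappings, comparing the Weyl solutions of $\ell$ and $\tilde\ell$ through an auxiliary matrix function that will turn out to be the identity. I would assemble the fundamental matrix $C(x,\la) = [C_k^{[j-1]}(x,\la)]_{j,k=1}^3$ from the solutions $C_k$ and their quasi-derivatives, and let $\Phi(x,\la) = [\Phi_k^{[j-1]}(x,\la)]_{j,k=1}^3$ be the analogous matrix built from the Weyl solutions $\Phi_k$, i.e.\ the solutions of \eqref{eqv} with $\Phi_k^{[j-1]}(0,\la) = M_{j,k}(\la)$. Since the system matrix in \eqref{initC} has zero trace, Liouville's formula gives $\det C(x,\la)\equiv 1$, and as $M(\la)$ is unipotent we obtain the factorization $\Phi(x,\la) = C(x,\la)M(\la)$ together with $\det\Phi(x,\la)\equiv 1$. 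The central object is the \emph{spectral mapping}
\[
P(x,\la) := \Phi(x,\la)\,\tilde\Phi(x,\la)^{-1} = C(x,\la)\,M(\la)\tilde M(\la)^{-1}\,\tilde C(x,\la)^{-1},
\]
whose entries are, for fixed $x$, meromorphic in $\la$ with possible poles only at the common eigenvalues $\la_{n,k} = \tilde\la_{n,k}$.

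The key step is to show that $P(x,\la)$ is in fact entire in $\la$. For this I would first establish the residue relation $\Res_{\la=\la_{n,k}}\Phi(x,\la) = \Phi_{\langle 0\rangle}(x,\la_{n,k})\,\mathcal N(\la_{n,k})$, which follows from $\Phi = CM$, the identity $M_{\langle -1\rangle} = M_{\langle 0\rangle}\mathcal N$ built into the definition \eqref{defN}, and the nilpotency $\mathcal N(\la_{n,k})^2 = 0$. The latter holds because, under our assumptions, the principal part $M_{\langle -1\rangle}$ at $\la_{n,k}$ is a rank-one strictly lower-triangular matrix (only the Weyl functions $M_{j,k}$, $j>k$, are singular there, and by Lemma~\ref{lem:weight} at a coincidence $\la_{n,1}=\la_{n,2}$ one of them stays regular), so $\mathcal N = M_{\langle 0\rangle}^{-1}M_{\langle -1\rangle}$ is strictly lower triangular of rank one and hence of vanishing trace and square. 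Writing $z = \la - \la_{n,k}$ and $Q(\la) := I + z^{-1}\mathcal N(\la_{n,k})$, the residue relation yields $\Phi(x,\la) = \Phi_{\langle 0\rangle}(x,\la_{n,k})\,Q(\la) + O(z)$, and since $\mathcal N = \tilde{\mathcal N}$ the \emph{same} factor $Q$ appears for $\tilde\Phi$. Because $\mathcal N^2 = 0$ gives $Q^{-1} = I - z^{-1}\mathcal N$ and $\det Q\equiv 1$, both $\Phi Q^{-1}$ and $\tilde\Phi Q^{-1}$ are holomorphic at $\la_{n,k}$ with determinant identically $1$; hence $P = (\Phi Q^{-1})(\tilde\Phi Q^{-1})^{-1}$ is holomorphic at every $\la_{n,k}$, so $P(x,\cdot)$ extends to an entire function.

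Next I would invoke the Birkhoff-type asymptotics of the Weyl solutions $\Phi_k,\tilde\Phi_k$ as $|\la|\to\infty$. In each sector of the $\la$-plane the matrices $\Phi$ and $\tilde\Phi$ share the same leading exponential behaviour, governed only by the cube roots of $\la$ and not by the coefficients; consequently $P(x,\la) = I + O(|\la|^{-1/3})$ uniformly on the rays bounding the sectors. Combining this bound with the entireness, a Phragm\'en--Lindel\"of argument in each sector followed by Liouville's theorem applied to the entries of $P(x,\la)-I$ forces $P(x,\la)\equiv I$, that is, $\Phi(x,\la) = \tilde\Phi(x,\la)$ for all $x,\la$. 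I expect this uniform asymptotic control over all sectors, including the behaviour on the sector boundaries, to be the main obstacle, since it rests on the delicate Birkhoff-type expansions and on keeping the error terms uniform in $x$.

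Finally, the identity $\Phi = \tilde\Phi$ unravels the coefficients. Comparing the rows of the two quasi-derivative matrices, the zeroth and first rows give $\Phi_k = \tilde\Phi_k$ and $\Phi_k' = \tilde\Phi_k'$, while equality of the second rows $\Phi_k^{[2]} = \tilde\Phi_k^{[2]}$ reads $(\sigma_0+\tau_1)\Phi_k = (\tilde\sigma_0+\tilde\tau_1)\Phi_k$ for $k=1,2,3$; since $\Phi_1,\Phi_2,\Phi_3$ form a fundamental system and cannot vanish simultaneously on a set of positive measure, this yields $\sigma_0+\tau_1 = \tilde\sigma_0+\tilde\tau_1$ a.e. Substituting $\Phi_k^{[3]} = \la\Phi_k = \tilde\Phi_k^{[3]}$ into the definition of the third quasi-derivative gives, in the same way, $\sigma_0-\tau_1 = \tilde\sigma_0-\tilde\tau_1$ a.e. Adding and subtracting these relations produces $\sigma_0 = \tilde\sigma_0$ and $\tau_1 = \tilde\tau_1$ in $L_2(0,1)$, whence $\tau_0 = \sigma_0' = \tilde\sigma_0' = \tilde\tau_0$ in $W_2^{-1}(0,1)$, which is the assertion of the proposition.
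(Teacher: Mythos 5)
The paper itself gives no proof of this proposition: it is imported verbatim from \cite{Bond22-alg}, so there is no in-paper argument to compare against. Your sketch reproduces the standard method-of-spectral-mappings uniqueness argument — the matrix $P(x,\la)=\Phi(x,\la)\tilde\Phi(x,\la)^{-1}$, removal of its poles via the common weight matrices $\mathcal N(\la_{n,k})$ with $\mathcal N^2=0$ (correctly justified through Lemma~\ref{lem:weight}), Liouville's theorem from the Birkhoff-type asymptotics, and finally reading $\sigma_0\pm\tau_1$ off the quasi-derivative rows — which is exactly the route of the cited source and of Yurko's general theory, so in outline it is correct and matches the intended proof. The one point to tighten is the normalization of the antiderivatives: since $\sigma_0$ and $\tilde\sigma_0$ are each fixed only up to an additive constant, the limit of $P(x,\la)$ is the identity only up to a constant multiple of the $(3,1)$ matrix unit, and the argument actually yields $\sigma_0-\tilde\sigma_0\equiv\mathrm{const}$ and $\tau_1=\tilde\tau_1$, which is precisely the asserted equality $\tau_0=\tilde\tau_0$ in $W_2^{-1}(0,1)$.
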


The following lemma establishes the relationship between the weight matrices $\mathcal N(\la_{n,k})$ and the weight numbers $\beta_{n,k}$, $\ga_n$.

\begin{lem} \label{lem:wm}
The following relations hold:
\begin{align*}
& n \not\in K \colon \quad \mathcal N(\la_{n,1}) = -\begin{bmatrix}
                            0 & 0 & 0 \\
                            \beta_{n,1} & 0 & 0 \\
                            0 & 0 & 0
                        \end{bmatrix},
\quad \mathcal N(\la_{n,2}) = -\begin{bmatrix}
                            0 & 0 & 0 \\
                            0 & 0 & 0 \\
                            0 & \beta_{n,2} & 0
                        \end{bmatrix}, \\
& n \in K \colon \quad \mathcal N(\la_n) = -\begin{bmatrix}
                            0 & 0 & 0 \\
                            \beta_{n,1} & 0 & 0 \\
                            \ga_n & \beta_{n,2} & 0
                        \end{bmatrix}.
\end{align*}
\end{lem}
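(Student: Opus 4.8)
The plan is to compute $\mathcal N(\la_{n,k})$ directly from \eqref{defN}, exploiting the unit lower-triangular structure of $M(\la)$ in \eqref{defM}. Denote the Laurent coefficients of the matrix $M$ at the pole $\la_{n,k}$ entrywise by $(M_{\langle -1\rangle})_{j,i} = \rho_{j,i}$ and $(M_{\langle 0\rangle})_{j,i} = \mu_{j,i}$. Since $M$ is lower triangular with unit diagonal, so is $M_{\langle 0\rangle}$, and $\rho_{j,i} = 0$ for $j \le i$. By \eqref{defM} each Weyl function $M_{j,i} = -\Delta_{j,i}/\Delta_{i,i}$ has at most a simple pole at a simple zero $\la_{n,i}$ of $\Delta_{i,i}$ with residue $\rho_{j,i} = -\Delta_{j,i}(\la_{n,i})/\dot\Delta_{i,i}(\la_{n,i})$; in particular $\rho_{2,1} = -\be_{n,1}$ and $\rho_{3,2} = -\be_{n,2}$ by \eqref{defbeta}. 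Inverting the unit lower-triangular matrix $M_{\langle 0\rangle}$ explicitly and multiplying out, the only possibly nonzero entries of $\mathcal N(\la_{n,k})$ are
\[
\mathcal N_{2,1} = \rho_{2,1}, \qquad \mathcal N_{3,2} = \rho_{3,2}, \qquad \mathcal N_{3,1} = \rho_{3,1} - \mu_{3,2}\,\rho_{2,1}.
\]
The first two already yield the claimed entries $-\be_{n,1}$ and $-\be_{n,2}$ in every case, so the whole content of the lemma is concentrated in $\mathcal N_{3,1}$.

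For $n \notin K$ the value $\la_{n,1}$ is not a zero of $\Delta_{2,2}$, so $M_{3,2}$ is analytic there and $\mu_{3,2} = M_{3,2}(\la_{n,1}) = -C_2(1,\la_{n,1})/C_3(1,\la_{n,1})$ by \eqref{defDelta2}. Writing $\rho_{3,1} = -\Delta_{3,1}(\la_{n,1})/\dot\Delta_{1,1}(\la_{n,1})$ and using $\rho_{2,1} = -\be_{n,1} = -\Delta_{2,1}(\la_{n,1})/\dot\Delta_{1,1}(\la_{n,1})$, so that $-\mu_{3,2}\rho_{2,1} = \mu_{3,2}\be_{n,1}$, clearing denominators reduces the required vanishing of $\mathcal N_{3,1}$ to the purely algebraic identity
\[
C_3(1,\la)\,\Delta_{3,1}(\la) + C_2(1,\la)\,\Delta_{2,1}(\la) = C_1(1,\la)\,\Delta_{1,1}(\la),
\]
which follows at once by expanding the $2\times 2$ determinants \eqref{defDelta1}--\eqref{defDelta2}. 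Since $\Delta_{1,1}(\la_{n,1}) = 0$, the right-hand side vanishes, whence $\mathcal N_{3,1} = 0$. At $\la_{n,2}$ the functions $M_{2,1}$ and $M_{3,1}$ are analytic (as $\Delta_{1,1}(\la_{n,2}) \ne 0$), so $\rho_{2,1} = \rho_{3,1} = 0$, giving $\mathcal N_{3,1} = 0$ and $\mathcal N_{3,2} = -\be_{n,2}$, as claimed.

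The new case is $n \in K$, where $\Delta_{1,1}(\la_n) = \Delta_{2,2}(\la_n) = 0$ and, by Lemma~\ref{lem:weight}, at least one of $\be_{n,1}, \be_{n,2}$ vanishes. If $\be_{n,1} = 0$, then $\rho_{2,1} = 0$ and $\mathcal N_{3,1} = \rho_{3,1} = -\Delta_{3,1}(\la_n)/\dot\Delta_{1,1}(\la_n) = -\ga_n$ by \eqref{defga1}. If $\be_{n,2} = 0$, then $M_{3,2}$ is again analytic, but now $C_2(1,\la_n) = C_3(1,\la_n) = 0$, so $\mu_{3,2}$ must be computed as the indeterminate limit $\mu_{3,2} = -\dot C_2(1,\la_n)/\dot C_3(1,\la_n)$. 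Substituting this together with $\rho_{3,1} = -C_1 C_2'/\dot\Delta_{1,1}$ and $\be_{n,1} = -C_1 C_3'/\dot\Delta_{1,1}$ (all quasi-derivatives at $(1,\la_n)$, using $C_2(1,\la_n) = C_3(1,\la_n) = 0$) into $\mathcal N_{3,1} = \rho_{3,1} + \mu_{3,2}\be_{n,1}$, and invoking the Wronskian-type identity
\[
\dot C_2(1,\la_n)\,C_3'(1,\la_n) - C_2'(1,\la_n)\,\dot C_3(1,\la_n) = -\dot\Delta_{1,1}(\la_n),
\]
obtained by differentiating $\Delta_{1,1} = -(C_2 C_3' - C_3 C_2')$ and using $C_2(1,\la_n) = C_3(1,\la_n) = 0$, the expression collapses to $-C_1(1,\la_n)/\dot C_3(1,\la_n) = -C_1(1,\la_n)/\dot\Delta_{2,2}(\la_n) = -\ga_n$ by \eqref{defga2}. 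When both $\be_{n,1}$ and $\be_{n,2}$ vanish, the two computations agree precisely because \eqref{defga1} and \eqref{defga2} coincide, as already shown in Lemma~\ref{lem:weight}.

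I expect the decisive step to be the sub-case $\be_{n,2} = 0$ with $n \in K$: there the value of $M_{3,2}$ is a $0/0$ indeterminacy that must be resolved, and the final cancellation rests on correctly identifying the Wronskian identity for $\dot\Delta_{1,1}$ at a common zero of $C_2(1,\cdot)$ and $C_3(1,\cdot)$. Every other entry reduces to bookkeeping of residues together with the single algebraic relation among the $C_k(1,\la)$.
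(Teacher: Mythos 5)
Your proposal is correct and follows essentially the same route as the paper: both compute $\mathcal N(\la_{n,k}) = (M_{\langle 0\rangle})^{-1}M_{\langle -1\rangle}$ element-wise using the unit lower-triangular structure, identify the $(2,1)$ and $(3,2)$ entries with $-\be_{n,k}$ via \eqref{defbeta}, and reduce everything to showing $M_{3,1,\langle -1\rangle}-M_{3,2,\langle 0\rangle}M_{2,1,\langle -1\rangle}$ equals $0$ for $n\notin K$ and $-\ga_n$ for $n\in K$. The paper dispatches that last step with a one-line reference to the technique of Lemma~\ref{lem:weight}, whereas you carry out the determinant identity and the $0/0$ resolution of $M_{3,2}$ explicitly; your computations check out.
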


\begin{proof}
The relation \eqref{defN} in the element-wise form yields
$$
\mathcal N(\la_{n,k}) = 
\begin{bmatrix}
    0 & 0 & 0 \\
    M_{2,1, \langle -1 \rangle} & 0 & 0 \\
    M_{3,1, \langle -1 \rangle} - M_{3,2,\langle 0 \rangle} M_{2,1, \langle -1 \rangle} & M_{3,2, \langle -1 \rangle} & 0
\end{bmatrix}(\la_{n,k}).
$$

By virtue of the definition \eqref{defbeta}, $M_{k+1,k \langle -1 \rangle}(\la_{n,k}) = -\beta_{n,k}$, $k =1, 2$. The technique similar to the proof of Lemma~\ref{lem:weight} shows that
$$
M_{3,1, \langle -1 \rangle}(\la_{n,k}) - M_{3,2,\langle 0 \rangle}(\la_{n,k}) M_{2,1, \langle -1 \rangle}(\la_{n,k}) = -
\begin{cases}
\ga_n, \:\: n \in K, \\
0,  \quad n \not\in K.  
\end{cases}
$$

This concludes the proof.
\end{proof}

Proposition~\ref{prop:uniq} together with Lemma~\ref{lem:wm} imply Theorem~\ref{thm:uniq} on the uniqueness of recovering $\mathcal T$ from the spectral data $\mathfrak S = (\{ \la_{n,k} \}, \{ \beta_{n,k} \}, \{ \ga_n \})$.

Proceed to the asymptotic properties of the spectral data. 

\begin{lem} \label{lem:asympt}
    The eigenvalues $\{ \la_{n,k} \}_{n \in \mathbb N, \, k = 1, 2}$ and the weight numbers $\{ \be_{n,k} \}_{n \in \mathbb N, \, k = 1, 2}$ satisfy the asymptotic relations \eqref{asymptla}.
\end{lem}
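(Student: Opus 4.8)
The plan is to reduce the required asymptotics to sharp expansions of the fundamental solutions $C_k(x,\la)$ and then to read off the behaviour of the characteristic functions $\Delta_{k,k}$ and of the weight numbers directly from their definitions. I introduce the spectral parameter $\rho$ by $\la = \rho^3$ and denote by $\omega_1,\omega_2,\omega_3$ the three cube roots of unity, so that $e^{\rho\omega_j x}$ are the solutions of the principal part $y''' = \la y$. Because $\tau_1 \in L_2(0,1)$ and $\tau_0 = \sigma_0'$ with $\sigma_0 \in L_2(0,1)$, I would invoke the Birkhoff-type fundamental system recently constructed in \cite{SS20} and analysed in \cite{Bond23-asympt}: in each sector of the $\rho$-plane one has solutions of the form $e^{\rho\omega_j x}\bigl(1 + O(\rho^{-1})\bigr)$, where the $O(\rho^{-1})$ term carries the antiderivative $\int_0^x \tau_1(t)\,dt$ and the remainder is controlled in $L_2$ as a function of $n$. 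Expanding each $C_k(x,\la)$ in this basis subject to the normalization $C_k^{[j-1]}(0,\la) = \de_{k,j}$ yields asymptotic formulas for the quasi-derivatives $C_k^{[j-1]}(1,\la)$ consisting of a leading exponential term, an explicit $\rho^{-1}$-correction governed by $\theta = \int_0^1 \tau_1(x)\,dx$, and an $l_2$ remainder.

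Next I would substitute these expansions into the determinantal definitions \eqref{defDelta1}--\eqref{defDelta2}. In the sector relevant for the large eigenvalues each $\Delta_{k,k}(\la)$ becomes a linear combination of the exponentials $e^{\rho\omega_j}$; retaining the two exponentials whose exponents have the largest real parts produces a transcendental equation whose zeros accumulate along a ray. Here $\Delta_{2,2}(\la) = C_3(1,\la)$ is a single solution value and hence the most transparent case, while $\Delta_{1,1}$ is a $2\times 2$ minor; the factor $(-1)^{k+1}$ in \eqref{asymptla} simply records that the two spectra accumulate along opposite rays. A Rouch\'e-type argument then shows that the zeros of $\Delta_{k,k}$ can be enumerated so that, to leading order, $\bigl((-1)^{k+1}\la_{n,k}\bigr)^{1/3} = \frac{2\pi}{\sqrt 3}\bigl(n + \tfrac16\bigr) + O(n^{-1})$, the spacing $\frac{2\pi}{\sqrt 3}$ being dictated by $|\omega_i - \omega_j| = \sqrt 3$. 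Solving the characteristic equation to one further order converts the $O(n^{-1})$ into the explicit term $-\theta/(2\pi^2 n)$ together with an $l_2$-sequence $\varkappa_n/n$, which gives the first formula in \eqref{asymptla}.

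For the weight numbers I would insert the eigenvalue asymptotics into the definition \eqref{defbeta}, that is, $\be_{n,k} = \Delta_{k+1,k}(\la_{n,k})/\dot\Delta_{k,k}(\la_{n,k})$. The normalization of the $C_k$ makes $C_k(1,\la)$ of order $\rho^{-(k-1)}$ times an exponential factor, so the two minors $\Delta_{k+1,k}$ and $\Delta_{k,k}$ differ by a fixed power of $\rho$; combining this with the chain-rule factor arising from $\la = \rho^3$ in $\dot\Delta_{k,k} = \frac{1}{3\rho^2}\frac{d}{d\rho}\Delta_{k,k}$ produces the leading constant $3$ and the power $\rho^3 = \la$, whence $\be_{n,k} = 3\la_{n,k}\bigl(1 + O(n^{-1})\bigr)$ with the correction again lying in $l_2$. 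This is the second formula in \eqref{asymptla}.

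The main obstacle is the uniform bookkeeping in the first two steps. One has to keep the Birkhoff expansions to exactly the order at which $\theta$ first appears and, crucially, control every remainder not merely as $o(1)$ but as an $l_2$-sequence in $n$; it is this sharp form that later makes the sufficiency part of Theorem~\ref{thm:nsc} and the stability estimate possible. Equally delicate is the choice of sectors and the Rouch\'e count that guarantees the enumeration of the zeros to be exhaustive and to match the indexing fixed after Theorem~\ref{thm:nsc}. Since these estimates have essentially been carried out in \cite{Bond23-asympt}, the proof may largely appeal to the results obtained there.
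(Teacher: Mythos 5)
Your strategy is sound and would work, but it follows a genuinely different (and more laborious) route than the paper. You propose the direct derivation: expand the $C_k(x,\la)$ in the Birkhoff-type basis of \cite{SS20}, substitute into the determinants \eqref{defDelta1}--\eqref{defDelta2}, locate the zeros by Rouch\'e's theorem, and extract the $\theta$-term and the $l_2$ remainders by solving the characteristic equation to second order. The paper instead uses a comparison trick: it introduces the exactly analysable constant-coefficient model $y''' + 2\tau_1^0 y' = \la y$ with $\tau_1^0 = \int_0^1 \tau_1(x)\,dx$, computes \emph{its} spectral data asymptotics by the classical method of \cite[Ch.~II]{Nai68} (this is where the explicit term $-\theta/(2\pi^2 n)$ comes from, essentially for free), and then cites Theorems~1.2 and~6.4 of \cite{Bond23-asympt}, which --- precisely because $\int_0^1(\tau_1 - \tau_1^0)\,dx = 0$ --- give the $l_2$-type difference bounds $\rho_{n,k} - \rho_{n,k}^0 = \varkappa_n/n$ and $\be_{n,k} - \be_{n,k}^0 = n^2\varkappa_{n1}$. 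What the paper's route buys is exactly what you identify as the main obstacle: all of the ``uniform bookkeeping'' of the $\theta$-dependence and of the $l_2$ remainders through the Birkhoff expansions, the determinants, and the Rouch\'e count is delegated to the cited difference theorems, and only a constant-coefficient computation remains. Your closing remark that the proof may largely appeal to \cite{Bond23-asympt} is correct, but note that the results actually invoked there are the spectral-data-\emph{difference} asymptotics under the matching-mean condition, not merely the Birkhoff solution estimates; if you carry out your direct version instead, you must verify that every remainder in the expansion of $C_k^{[j-1]}(1,\la)$ is $l_2$ in $n$ (not just $O(n^{-1})$ or $o(1)$), since the later parts of the paper (the convergence of the reconstruction series and the stability estimate) genuinely need the $l_2$ form.
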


\begin{proof}
Along with \eqref{eqv}, consider the equation
$$
y''' + 2 \tau_1^0 y' = \la y, \quad x \in (0,1),
$$
where $\tau_1^0 = \int\limits_0^1 \tau_1(x) \, dx$, and the corresponding spectral data $\{ \la_{n,k}^0, \beta_{n,k}^0 \}_{n \in \mathbb N, \, k= 1, 2}$ generated by the boundary conditions \eqref{bc1} and \eqref{bc2}. The asymptotics of $\{ \la_{n,k}^0, \beta_{n,k}^0 \}_{n \in \mathbb N, \, k= 1, 2}$ can be found by the standard method (see, e.g., \cite[Chapter~II]{Nai68}):
\begin{gather*}
\la_{n,k}^0 = (-1)^{k+1} (\rho_{n,k}^0)^3, \quad
\rho_{n,k}^0 = \frac{2\pi}{\sqrt 3} \left( n + \frac{1}{6} - \frac{\tau_1^0}{2 \pi^2 n} + O\bigl( n^{-2}\bigr)\right), \\
\beta_{n,k}^0 = 3 \la_{n,k}^0 \left( 1 + O\bigl( n^{-2}\bigr)\right), \quad n \in \mathbb N, \quad k = 1, 2.
\end{gather*}

Applying Theorems~1.2 and 6.4 of \cite{Bond23-asympt} and taking the relation $\int_0^1 (\tau_1 - \tau_1^0)(x) \, dx = 0$ into account, we conclude that
\begin{gather*}
\la_{n,k} = (-1)^{k+1} \rho_{n,k}^3, \quad
\rho_{n,k} - \rho_{n,k}^0 = \frac{\varkappa_n}{n}, \quad \be_{n,k} - \be_{n,k}^0 = n^2 \varkappa_{n1}, \\
\{ \varkappa_n \}, \, \{ \varkappa_{n1} \} \in l_2, 
\quad n \in \mathbb N, \, k = 1, 2.
\end{gather*}

This yields the claim of the lemma.
\end{proof}

Denote by $\Phi_k(x, \la)$, $k = 1, 2, 3$, the so-called Weyl solutions of equation \eqref{eqv} satisfying the boundary conditions
$$
\Phi_k^{[j-1]}(0,\la) = \de_{k,j}, \quad j = \overline{1,k}, \quad
\Phi_k^{[3-j]}(1,\la) = 0, \quad j = \overline{k+1,3}.
$$

It can be easily shown that 
\begin{equation} \label{PhiCM}
\Phi_k(x, \la) = C_k(x, \la) + \sum_{j = k+1}^3 M_{j,k}(\la) C_j(x, \la), \quad k = 1, 2, 3.
\end{equation}

Consequently, the solutions $\Phi_1(x, \la)$ and $\Phi_2(x, \la)$ are meromorphic in $\la$ with the simple poles $\{ \la_{n,1} \}_{n = 1}^{\infty}$ and $\{ \la_{n,2} \}_{n = 1}^{\infty}$, respectively. The solution $\Phi_3(x, \la) \equiv C_3(x, \la)$ is entire in $\la$.
Using \eqref{PhiCM}, \eqref{defM}, \eqref{defbeta}, and \eqref{defga1}, one can easily establish the following connection between the Weyl solutions and the spectral data.

\begin{lem} \label{lem:WSD}
The following relations hold:
\begin{align*} 
& \Phi_2'(1, \la_{n,1}) = 0, \quad \Phi_3(1, \la_{n,2}) = 0, \quad n \not\in K, \\
& \be_{n,1} = -\Res\limits_{\la = \la_{n,1}} \Phi_1'(0, \la_{n,1}), \quad \be_{n,2} = -\Res\limits_{\la = \la_{n,2}} \Phi_2^{[2]}(0,\la), \quad n \in \mathbb N, \\ 
& \Phi_3(1,\la_n) = \Phi_3'(1,\la_n) = 0, \quad \ga_n = -\Res_{\la = \la_n} \Phi_1^{[2]}(0, \la), \quad n \in K, \, \text{if} \:\: \be_{n,1} = 0.
\end{align*}
\end{lem}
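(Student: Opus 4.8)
The plan is to extract all six relations directly from the explicit representation \eqref{PhiCM} of the Weyl solutions, combined with the initial data \eqref{initC} of the functions $C_k$ and the definitions \eqref{defM}, \eqref{defbeta}, \eqref{defga1} of the Weyl functions and weight numbers. No new machinery is needed; the work consists of a few substitutions and one appeal to the case analysis already carried out in Lemma~\ref{lem:weight}.

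First I would evaluate the relevant quasi-derivatives of $\Phi_1$, $\Phi_2$, $\Phi_3$ at $x = 0$. Since the coefficients $M_{j,k}(\la)$ in \eqref{PhiCM} do not depend on $x$ and $C_k^{[j-1]}(0,\la) = \de_{k,j}$, the representation collapses at $x = 0$ to $\Phi_1'(0,\la) = M_{2,1}(\la)$, $\Phi_1^{[2]}(0,\la) = M_{3,1}(\la)$, and $\Phi_2^{[2]}(0,\la) = M_{3,2}(\la)$. The residue formulas then follow immediately: taking residues at the simple poles and using \eqref{defbeta} gives $\Res_{\la=\la_{n,k}} M_{k+1,k}(\la) = -\be_{n,k}$, which yields the second and fourth displayed identities, while \eqref{defga1} gives $\Res_{\la=\la_n} M_{3,1}(\la) = -\ga_n$ for $n \in K$ with $\be_{n,1}=0$, which is the sixth identity.

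Next I would treat the boundary values at $x = 1$. For $\Phi_3 \equiv C_3$ the relations $\Phi_3(1,\la_{n,2}) = 0$ and $\Phi_3(1,\la_n) = 0$ are immediate, since $C_3(1,\la) = \Delta_{2,2}(\la)$ and $\la_{n,2}$, $\la_n$ are zeros of $\Delta_{2,2}$. For $\Phi_2$, I would substitute $M_{3,2} = -\Delta_{3,2}/\Delta_{2,2}$ and use $\Delta_{3,2} = C_2(1,\cdot)$, $\Delta_{2,2} = C_3(1,\cdot)$ together with the determinant definition \eqref{defDelta1} of $\Delta_{1,1}$ to obtain the clean identity $\Phi_2'(1,\la) = \Delta_{1,1}(\la)/\Delta_{2,2}(\la)$; at $\la = \la_{n,1}$ with $n \notin K$ the numerator vanishes and the denominator does not, so $\Phi_2'(1,\la_{n,1}) = 0$.

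The only point requiring extra care is $\Phi_3'(1,\la_n) = C_3'(1,\la_n) = 0$ for $n \in K$, which does not follow merely from $\la_n$ being a common zero. Here I would invoke the case analysis of Lemma~\ref{lem:weight}: the hypothesis $\be_{n,1} = 0$ places us in Case~2, where precisely $C_3'(1,\la_n) = 0$. I expect this last step, together with the accompanying bookkeeping of which $M_{j,k}$ remain analytic at each relevant $\la_{n,k}$ (so that the evaluations at $x = 1$ and the residue computations are legitimate, e.g. $\Delta_{2,2}(\la_{n,1}) \ne 0$ for $n \notin K$), to be the main — though modest — obstacle; the remaining five relations are direct substitutions.
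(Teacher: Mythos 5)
Your proposal is correct and follows exactly the route the paper indicates (the paper omits the proof, merely citing \eqref{PhiCM}, \eqref{defM}, \eqref{defbeta}, \eqref{defga1}): evaluating \eqref{PhiCM} at $x=0$ and $x=1$ via the initial conditions of $C_k$, and handling $\Phi_3'(1,\la_n)=C_3'(1,\la_n)=0$ through the dichotomy of Lemma~\ref{lem:weight}. The one step worth writing out is the converse direction of Case~2 — that $\be_{n,1}=0$ forces $C_3'(1,\la_n)=0$, since otherwise $\Delta_{2,1}(\la_n)=-C_1(1,\la_n)C_3'(1,\la_n)=0$ and $C_2(1,\la_n)C_3'(1,\la_n)=0$ would give $C_1(1,\la_n)=C_2(1,\la_n)=C_3(1,\la_n)=0$, contradicting that the $C_k$ form a fundamental system — but you correctly identify this as the only point needing care.
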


The Weyl solutions $\Phi_k(x, \la)$ are remarkable by their behavior as $|\la| \to \infty$. In order to describe this behavior, put $\la = \rho^3$ and divide the $\rho$-plane into the sectors $\Gamma_s = \left\{ \rho \in \mathbb C \colon \arg \rho \in \bigl( \frac{\pi (s-1)}{3}, \frac{\pi s}{3}\bigr)\right\}$, $s = \overline{1,6}$. In each fixed sector $\Gamma_s$, denote by $\{ \om_k \}_{k = 1}^3$ the roots of the equation $\om^3 = 1$ numbered so that
\begin{equation} \label{order}
\mbox{Re} \, (\rho \om_1) < \mbox{Re} \, (\rho \om_2) < \mbox{Re} \, (\rho \om_3), \quad \rho \in \Gamma_s.
\end{equation}

Clearly, the inequalities \eqref{order} become non-strict for $\rho \in \overline{\Gamma_s}$.

\begin{lem}\label{lem:estPhi}
In each closed sector $\overline{\Gamma_s}$, the following estimate is fulfilled
$$
|\Phi_k^{[j]}(x, \rho^3)| \le C |\rho|^{j - k + 1} |\exp(\rho \om_k x)|, \quad k = 1,2,3, \:\: j = 0,1,2, \:\: x \in [0,1], \:\: |\rho| \ge \rho^*.
$$
\end{lem}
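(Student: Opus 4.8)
The plan is to reduce the estimate to the known Birkhoff-type asymptotics of a fundamental solution system of equation~\eqref{eqv}. By the Birkhoff-type solutions obtained in \cite{SS20} and used in \cite{Bond23-asympt}, in each sector $\Gamma_s$ the equation possesses a fundamental system $\{ y_t(x,\rho) \}_{t=1}^3$ whose quasi-derivatives satisfy
\begin{equation*}
y_t^{[j]}(x,\rho) = (\rho\om_t)^j \exp(\rho\om_t x)\bigl( 1 + O(\rho^{-1}) \bigr), \quad |\rho| \to \iy,
\end{equation*}
uniformly in $x \in [0,1]$ and $\rho \in \overline{\Gamma_s}$ for $j = 0, 1, 2$, where $\{ \om_t \}_{t=1}^3$ are ordered as in \eqref{order}; in particular $|y_t^{[j]}(x,\rho)| \le C|\rho|^j |\exp(\rho\om_t x)|$ for $|\rho| \ge \rho^*$. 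First I would expand each Weyl solution in this basis, writing $\Phi_k(x,\la) = \sum_{t=1}^3 a_{k,t}(\rho)\, y_t(x,\rho)$, and determine the coefficients $a_{k,t}$ from the defining relations $\Phi_k^{[i]}(0,\la) = \de_{k,i+1}$ for $i = \overline{0,k-1}$ and $\Phi_k^{[i]}(1,\la) = 0$ for $i = \overline{0,2-k}$.

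Substituting the expansion into these relations yields a $3\times3$ linear system for $(a_{k,1}, a_{k,2}, a_{k,3})$ whose matrix entries are, up to $(1 + O(\rho^{-1}))$ factors, equal to $(\rho\om_t)^i$ for the $k$ conditions at $x = 0$ and to $(\rho\om_t)^i \exp(\rho\om_t)$ for the $3-k$ conditions at $x = 1$. I would then solve the system by Cramer's rule. The determinant factors, to leading order, as a nonzero Vandermonde-type constant in the $\om_t$ times an explicit power of $|\rho|$ times $\prod_{t > k} \exp(\rho\om_t)$, the exponentials arising precisely from the $x=1$ conditions, which couple to the fastest-growing solutions $y_{k+1}, \dots, y_3$ by virtue of the ordering \eqref{order}. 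A cofactor computation then gives $a_{k,k}(\rho) = (\rho\om_k)^{-(k-1)}(1 + O(\rho^{-1}))$, the coefficients $a_{k,t}$ with $t < k$ of the same order $O(|\rho|^{-(k-1)})$, and the coefficients $a_{k,t}$ with $t > k$ exponentially suppressed relative to $a_{k,k}$ by a factor $\exp(\rho(\om_k - \om_t))$.

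Combining the coefficient bounds with the asymptotics of $y_t^{[j]}$ yields the estimate termwise. The $t = k$ term contributes $|a_{k,k}|\,|\rho|^j |\exp(\rho\om_k x)| \le C|\rho|^{j-k+1}|\exp(\rho\om_k x)|$, which is exactly the claimed bound. For $t < k$ the coefficient has the same order as $a_{k,k}$, and since $\mbox{Re}(\rho\om_t) \le \mbox{Re}(\rho\om_k)$ and $x \ge 0$ we have $|\exp(\rho\om_t x)| \le |\exp(\rho\om_k x)|$, so these terms obey the same bound. For $t > k$ I would write $\exp(\rho\om_k)\exp(\rho\om_t x) = \exp(\rho\om_k x)\exp(\rho(1-x)(\om_k - \om_t))$ and use $1 - x \ge 0$ together with $\mbox{Re}(\rho(\om_k - \om_t)) \le 0$ to bound the exponentially suppressed contribution by $C|\rho|^{j-k+1}|\exp(\rho\om_k x)|$ as well. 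Summing the three contributions proves the lemma.

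The main obstacle is to make all the determinant and coefficient estimates uniform up to the boundary of the sector. On the boundary rays the strict inequalities \eqref{order} degenerate into equalities, so two of the exponentials $\exp(\rho\om_t)$ acquire equal moduli and the ``dominant'' term of the determinant is no longer strictly separated from a competing one. I would handle this by computing the determinant of the limiting exponential system exactly --- it is a generalized Vandermonde that does not vanish for distinct $\om_t$ --- and then absorbing the $O(\rho^{-1})$ perturbation, so that the determinant stays bounded below, being $\ge c|\rho|^{p}\prod_{t>k}|\exp(\rho\om_t)|$ in modulus, uniformly on $\overline{\Gamma_s}$. Once this lower bound is secured, the remaining steps go through without change.
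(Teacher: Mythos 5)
Your proof follows exactly the route the paper indicates (and omits): expansion of the Weyl solutions in the Birkhoff-type fundamental system, determination of the coefficients from the boundary conditions by Cramer's rule, and termwise estimation using the ordering \eqref{order} — this is precisely the argument of Lemma~3 in \cite{Bond21} and Proposition~2 in \cite{Bond22-alg} to which the paper defers. The only caveat, common to your write-up and to the lemma as stated, is that the lower bound on the characteristic determinant (and hence the estimate itself) can hold only for $\rho$ bounded away from the poles of $\Phi_k$, which accumulate near the boundary rays of $\overline{\Gamma_s}$; this restriction is implicit here and is made explicit later in the paper via the regions $\Gamma_{s,\de}$.
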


The proof of Lemma~\ref{lem:estPhi} is based on expansions of the Weyl solutions by the fundamental system of the Birkhoff-type solutions. Such expansions have been estimated for differential operators of arbitrary order in \cite[Lemma 3]{Bond21} and in \cite[Proposition~2]{Bond22-alg}. It is worth mentioning that the estimates of Lemma~\ref{lem:estPhi} are similar to the ones for the case of differential operators with regular coefficients (see formulas (2.1.20) in \cite{Yur02}), and there are no principal differences in the proofs. Therefore, we omit the proof of Lemma~\ref{lem:estPhi}.

Along with $F(x)$, consider the matrix function
\begin{equation} \label{defF*}
     F^{\star}(x) := \begin{bmatrix}
            0 & 1 & 0 \\
            (\sigma_0 - \tau_1) & 0 & 1 \\
            0 & -(\sigma_0 + \tau_1) & 0
        \end{bmatrix},
\end{equation}
which defines the quasi-derivatives
\begin{equation} \label{quasiz}
z^{[0]} := z, \quad z^{[k]} := (z^{[k-1]})' - \sum_{j = 1}^k f^{\star}_{k,j} z^{[j - 1]}, \quad k = 1, 2, 3,
\end{equation}
the domain 
$$
\mathcal D_{F^{\star}} := \{ z \colon z^{[k]} \in AC[0,1], \, k = 0, 1, 2 \},
$$
and the differential expression $\ell^{\star}(z) = -z^{[3]}$. Analogously to $C_k(x, \la)$ and $\Phi_k(x, \la)$, we define the solutions $C_k^{\star}(x, \la)$ and $\Phi_k^{\star}(x, \la)$, $k = 1, 2, 3$, of the equation $\ell^{\star}(z) = \la z$ satisfying the initial conditions
$$
C_k^{\star[j-1]}(0, \la) = \de_{k,j}, \quad k,j = 1, 2, 3,
$$
and the boundary conditions
$$
\Phi_k^{\star [j-1]}(0,\la) = \de_{k,j}, \quad j = \overline{1,k}, \quad
\Phi_k^{\star [3-j]}(1,\la) = 0, \quad j = \overline{k+1,3},
$$
respectively, where the quasi-derivatives \eqref{quasiz} are used. The following relations, similar to \eqref{PhiCM}, hold:
$$
\Phi_k^{\star}(x, \la) = C_k^{\star}(x, \la) + \sum_{j = k+1}^3 M_{j,k}^{\star}(\la) C_j^{\star}(x, \la), \quad k = 1, 2, 3,
$$
where $M_{j,k}^{\star}(\la)$ are the entries of the Weyl-Yurko matrix $M^{\star}(\la)$ analogous to $M(\la)$.

The relationship between $M(\la)$ and $M^{\star}(\la)$ has been established in Section~2 of \cite{Bond22-alg}. In particular, for the third-order case, Lemma~2 of \cite{Bond22-alg} implies the following proposition. 

\begin{prop} \label{prop:M*}
$M_{2,1}(\la) \equiv M^{\star}_{3,2}(\la)$, $M_{3,2}(\la) \equiv M_{2,1}^{\star}(\la)$, and $M_{3,1}^{\star}(\la) - M_{2,1}^{\star} M_{2,1}(\la) + M_{3,1}(\la) \equiv 0$.   
\end{prop}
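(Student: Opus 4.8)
The plan is to obtain all three identities simultaneously from a single Lagrange-type bilinear relation between $\ell$ and $\ell^{\star}$, which is precisely the third-order instance of \cite[Lemma~2]{Bond22-alg}. First I would pass to the associated first-order systems. Writing $\vec y = (y, y', y^{[2]})^{T}$ and $\vec z = (z, z', z^{[2]})^{T}$ (the quasi-derivatives being taken for $\ell$ and for $\ell^{\star}$, respectively), the equations $\ell(y) = \la y$ and $\ell^{\star}(z) = \la z$ take the form $\vec y\,' = A(x,\la) \vec y$ and $\vec z\,' = B(x,\la)\vec z$, where $A$ is the matrix of \eqref{initC} and, since $\ell^{\star}(z) = -z^{[3]}$,
\begin{equation*}
B(x,\la) = \begin{bmatrix} 0 & 1 & 0 \\ (\sigma_0 - \tau_1) & 0 & 1 \\ -\la & -(\sigma_0 + \tau_1) & 0 \end{bmatrix}.
\end{equation*}
The crucial algebraic fact, to be checked by direct multiplication, is that the constant matrix
\begin{equation*}
J = \begin{bmatrix} 0 & 0 & 1 \\ 0 & -1 & 0 \\ 1 & 0 & 0 \end{bmatrix}
\end{equation*}
satisfies $B^{T} J + J A \equiv 0$. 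Consequently, for any solutions $\vec y$, $\vec z$ of the two systems, the bilinear concomitant $\vec z\,^{T} J \vec y = z\, y^{[2]} - z'\, y' + z^{[2]} y$ is independent of $x$.

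Next I would promote this to the fundamental matrices. Let $\mathcal C$, $\mathcal C^{\star}$ denote the matrices whose columns are the quasi-derivative vectors of $C_k$, $C_k^{\star}$, and let $\Psi = \mathcal C M$, $\Psi^{\star} = \mathcal C^{\star} M^{\star}$ be the corresponding matrices of Weyl solutions (this reformulates \eqref{PhiCM} and its starred analogue, using that $M$, $M^{\star}$ are unipotent lower-triangular). Because $\mathcal C(0,\la) = \mathcal C^{\star}(0,\la) = I$ by the initial conditions, the $x$-independence of the concomitant yields $(\mathcal C^{\star})^{T} J \mathcal C \equiv J$, whence the concomitant matrix of the Weyl solutions is
\begin{equation*}
G(\la) := (\Psi^{\star})^{T} J \Psi = (M^{\star})^{T} J M,
\end{equation*}
again independent of $x$. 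A short computation of $(M^{\star})^{T} J M$ from the form \eqref{defM} shows that its $(1,1)$, $(1,2)$, $(2,1)$ entries are exactly
\begin{equation*}
G_{1,1} = M_{3,1} - M^{\star}_{2,1} M_{2,1} + M^{\star}_{3,1}, \quad G_{1,2} = M_{3,2} - M^{\star}_{2,1}, \quad G_{2,1} = M^{\star}_{3,2} - M_{2,1},
\end{equation*}
so the three assertions of the proposition are equivalent to $G_{1,1} = G_{1,2} = G_{2,1} = 0$.

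It then remains to evaluate $G$ at $x = 1$ and read off these entries. Here I would use the defining boundary conditions of the Weyl solutions: at $x = 1$ one has $\Phi_1 = \Phi_1' = 0$, $\Phi_2 = 0$, and $\Phi^{\star}_1 = (\Phi^{\star}_1)' = 0$, $\Phi^{\star}_2 = 0$. Thus $\vec\Phi_1(1)$ and $\vec\Phi^{\star}_1(1)$ have only their last component possibly nonzero, while $\vec\Phi_2(1)$ and $\vec\Phi^{\star}_2(1)$ have vanishing first component. Substituting these into $z\, y^{[2]} - z'\, y' + z^{[2]} y$, this triangular pattern of zeros makes every term of $G_{1,1}$, $G_{1,2}$, $G_{2,1}$ vanish, so all three entries are $0$ without computing any of the surviving boundary values. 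Equating the two expressions for these entries gives $M^{\star}_{3,2} = M_{2,1}$, $M^{\star}_{2,1} = M_{3,2}$, and $M^{\star}_{3,1} - M^{\star}_{2,1} M_{2,1} + M_{3,1} = 0$; since all the functions are meromorphic in $\la$, these identities extend from their common domain of regularity to all of $\mathbb C$.

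The main obstacle is bookkeeping rather than conceptual: one must correctly read off the adjoint system matrix $B$ from $F^{\star}$ (in particular the sign of the $\la$-entry, coming from $\ell^{\star}(z) = -z^{[3]}$) and then identify the unique intertwiner $J$ with $B^{T} J + J A \equiv 0$, keeping the two different quasi-derivative conventions aligned so that the components paired by $J$ in the concomitant are the intended ones. Once $J$ is verified, the remainder is formal linear algebra, and the argument above is the self-contained version of the cited lemma specialized to the third order.
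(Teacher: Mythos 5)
Your proof is correct. The paper itself does not prove this proposition but simply cites Lemma~2 of \cite{Bond22-alg}; your Lagrange-concomitant argument is the natural self-contained route and all the key steps check out: the intertwining identity $B^{T}J + JA \equiv 0$ holds with your $J$ (consistent with the paper's relation \eqref{wron} when $\mu=\la$), the factorizations $\Psi = \mathcal C M$, $\Psi^{\star} = \mathcal C^{\star}M^{\star}$ with $\mathcal C(0,\la)=\mathcal C^{\star}(0,\la)=I$ give $G=(M^{\star})^{T}JM$, the entries $G_{1,1}, G_{1,2}, G_{2,1}$ are exactly the three claimed combinations, and the boundary conditions $\Phi_1(1)=\Phi_1'(1)=\Phi_2(1)=0$ (and their starred analogues) kill every term of those entries at $x=1$, with meromorphic continuation handling the excluded $\la$.
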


\begin{cor} \label{cor:M*}
 The function $M_{3,2}^{\star}(\la)$ has the simple poles at $\la = \la_{n,1}$ such that $\be_{n,1} \ne 0$, and the function $M_{2,1}^{\star}(\la)$ has the simple poles at $\la = \la_{n,2}$ such that $\be_{n,2} \ne 0$. Moreover,
\begin{align*}
& \be_{n,1} = -\Res_{\la = \la_{n,1}} M_{3,2}^{\star}(\la), \quad
\be_{n,2} = -\Res_{\la = \la_{n,2}} M_{2,1}^{\star}(\la), \\
\text{if} \:\: \be_{n,1} = 0 \colon \quad & \ga_n = -\Res_{\la = \la_n} M_{3,1}(\la) = \Res_{\la = \la_n} (M_{3,1}^{\star}(\la) - M_{2,1}^{\star} M_{2,1}(\la)), \\
\text{if} \:\: \be_{n,2} = 0 \colon \quad & \ga_n = -\Res_{\la = \la_n} (M_{3,1}(\la) - M_{2,1}^{\star}(\la) M_{2,1}(\la)) = \Res_{\la = \la_n} M_{3,1}^{\star}(\la).
\end{align*}
\end{cor}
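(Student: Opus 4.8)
The plan is to derive every assertion from Proposition~\ref{prop:M*}, the definition \eqref{defbeta} of $\be_{n,k}$ as a negative residue, the definitions \eqref{defga1}--\eqref{defga2} of $\ga_n$, and the elementary observation that $M_{j,k}(\la) = -\Delta_{j,k}(\la)/\Delta_{k,k}(\la)$ can only have poles at the zeros $\{\la_{n,k}\}$ of $\Delta_{k,k}$. No new analytic tools are needed; the work is bookkeeping of residues together with one genuine computation.

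First I would settle the two $\be$-formulas. Proposition~\ref{prop:M*} gives $M_{3,2}^{\star} \equiv M_{2,1}$. Since $M_{2,1} = -\Delta_{2,1}/\Delta_{1,1}$ and the zeros $\la_{n,1}$ of $\Delta_{1,1}$ are simple, $M_{2,1}$ has a simple pole at $\la_{n,1}$ precisely when $\Delta_{2,1}(\la_{n,1}) \ne 0$, that is, when $\be_{n,1} \ne 0$ by \eqref{defbeta}; otherwise the pole cancels and $M_{2,1}$ is analytic there. Taking residues and using \eqref{defbeta}, $\be_{n,1} = -\Res_{\la = \la_{n,1}} M_{2,1} = -\Res_{\la = \la_{n,1}} M_{3,2}^{\star}$. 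The identical argument applied to $M_{2,1}^{\star} \equiv M_{3,2}$ yields the corresponding statement for $\be_{n,2}$.

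Next come the $\ga$-formulas. In the case $\be_{n,1} = 0$, definition \eqref{defga1} together with $M_{3,1} = -\Delta_{3,1}/\Delta_{1,1}$ gives at once $\ga_n = \Delta_{3,1}(\la_n)/\dot\Delta_{1,1}(\la_n) = -\Res_{\la = \la_n} M_{3,1}$. The third relation of Proposition~\ref{prop:M*}, rewritten as $M_{3,1} = -(M_{3,1}^{\star} - M_{2,1}^{\star} M_{2,1})$, then converts this into $\ga_n = \Res_{\la = \la_n}(M_{3,1}^{\star} - M_{2,1}^{\star} M_{2,1})$. For the case $\be_{n,2} = 0$, the two expressions $-\Res_{\la = \la_n}(M_{3,1} - M_{2,1}^{\star} M_{2,1})$ and $\Res_{\la = \la_n} M_{3,1}^{\star}$ are equal by the same identity, so the real content is to show that their common value is $\ga_n$ as given by \eqref{defga2}.

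I expect this last point to be the main obstacle. Unlike the $\be_{n,1} = 0$ case, $M_{3,1}$ alone no longer produces $\ga_n$: one must retain the product term $M_{2,1}^{\star} M_{2,1} = M_{3,2} M_{2,1}$, in which $M_{3,2}$ is analytic but in general nonzero at $\la_n$ while $M_{2,1}$ may carry a simple pole. I would compute the residue directly. Using Case~1 of the proof of Lemma~\ref{lem:weight}, at $\la_n$ one has $C_2(1,\la_n) = C_3(1,\la_n) = 0$ and $C_1(1,\la_n) \ne 0$, whence $\Delta_{3,1}(\la_n) = C_1 C_2'$, $\Delta_{2,1}(\la_n) = -C_1 C_3'$, $\dot\Delta_{2,2}(\la_n) = \dot C_3$, $\dot\Delta_{3,2}(\la_n) = \dot C_2$ (all evaluated at $(1,\la_n)$), and, since both $\Delta_{3,2} = C_2(1,\cdot)$ and $\Delta_{2,2} = C_3(1,\cdot)$ vanish at $\la_n$, $M_{3,2}(\la_n) = -\dot C_2(1,\la_n)/\dot C_3(1,\la_n)$. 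Collecting the residue of $M_{3,1}$ and of $-M_{3,2}M_{2,1}$ gives
\[
-\Res_{\la = \la_n}(M_{3,1} - M_{2,1}^{\star} M_{2,1}) = \frac{C_1(1,\la_n)}{\dot\Delta_{1,1}(\la_n)\,\dot C_3(1,\la_n)}\bigl(C_2'(1,\la_n)\dot C_3(1,\la_n) - \dot C_2(1,\la_n) C_3'(1,\la_n)\bigr).
\]
The key simplification is that, since $C_2(1,\la_n) = C_3(1,\la_n) = 0$, differentiating $\Delta_{1,1} = -(C_2 C_3' - C_3 C_2')$ in $\la$ leaves exactly $\dot\Delta_{1,1}(\la_n) = C_2'(1,\la_n)\dot C_3(1,\la_n) - \dot C_2(1,\la_n) C_3'(1,\la_n)$, which cancels the bracketed factor and reduces the right-hand side to $C_1(1,\la_n)/\dot C_3(1,\la_n) = C_1(1,\la_n)/\dot\Delta_{2,2}(\la_n) = \ga_n$, as required by \eqref{defga2}. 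The same formula stays valid when additionally $\be_{n,1} = 0$ (the product term then being analytic), recovering the consistency already recorded in Lemma~\ref{lem:weight}.
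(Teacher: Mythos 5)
Your proposal is correct and follows essentially the same route the paper intends: the corollary is stated without a separate proof as a direct consequence of Proposition~\ref{prop:M*} combined with the definitions \eqref{defbeta}, \eqref{defga1}--\eqref{defga2} and residue computations of exactly the kind carried out in the proof of Lemma~\ref{lem:weight} (your cancellation $\dot\Delta_{1,1}(\la_n) = C_2'(1,\la_n)\dot C_3(1,\la_n) - \dot C_2(1,\la_n)C_3'(1,\la_n)$ in the case $\be_{n,2}=0$ checks out). Nothing further is needed.
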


Consequently, the functions $\Phi_1^{\star}(x, \la)$ and $\Phi_2^{\star}(x, \la)$ are meromorphic in $\la$ with the simple poles $\{\la_{n,2} \}_{n = 1}^{\infty}$ and $\{\la_{n,1} \}_{n = 1}^{\infty}$, respectively, and $\Phi_3^{\star}(x, \la)$ is entire in $\la$. 

\section{Main equation} \label{sec:equ}

In this section, we provide the main equation of Inverse Problem~\ref{ip:main}. The main equation for higher-order differential operators with distribution coefficients in the general form has been derived in \cite{Bond22-alg}. Here, we introduce necessary notations and adapt the results of \cite{Bond22-alg} to our case.

Consider equation \eqref{eqv} with the coefficients $\mathcal T = (\tau_0, \tau_1) \in W$ and the spectral data $\mathfrak S$. Choose any coefficients $\tilde {\mathcal T} = (\tilde \tau_0, \tilde \tau_1)$ satisfying the following conditions:
\begin{enumerate}
    \item $\int\limits_0^1 \tau_1(x) \, dx = \int\limits_0^1 \tilde \tau_1(x) \, dx$.
    \item $\tilde {\mathcal T} \in W$.
    \item $\{ \tilde \la_{n,1} \} \cap \{ \tilde \la_{n,2} \} = \varnothing$, that is, $\tilde K = \varnothing$.
    \item $\tilde \la_{n,k} \ne \la_{n_0,k_0}$, $n, n_0 \in \mathbb N$, $k,k_0 = 1, 2$.
\end{enumerate}

We call $\tilde {\mathcal T}$ the model problem.
It can be shown that, for any values $\int_0^1 \tau_1(x) \, dx$ and $\{ \la_{n,k} \}_{n \in \mathbb N, \, k = 1, 2}$ satisfying the conditions 1 and 2 of Theorem~\ref{thm:nsc}, a model problem with the above properties exists.
Indeed, in order to achieve the condition 1, one can take $\tilde \tau_1 := \int\limits_0^1 \tau_1(x) \, dx$ and $\tilde \tau_0 := 0$. If the conditions 2--4 are not fulfilled, one can implement a minor shift of the spectral data to achieve these conditions. On the other hand, the conditions 2--4 are not principal. If some of them do not hold, the main results still remain valid, but the form of the main equation will be slightly different.

Note that the problems with the coefficients $\mathcal T$ and $\tilde{\mathcal T}$ have different quasi-derivatives. Recall that the quasi-derivatives related to $\mathcal T$ are defined via \eqref{quasi} by using the entries of the associated matrix $F(x)$ given by \eqref{defF}. The quasi-derivatives related to $\tilde {\mathcal T}$ are generated similarly by the following associated matrix:
$$
\tilde F(x) = \begin{bmatrix}
            0 & 1 & 0 \\
            -(\tilde \sigma_0 + \tilde \tau_1) & 0 & 1 \\
            0 & (\tilde \sigma_0 - \tilde \tau_1) & 0
        \end{bmatrix}, \qquad (\tilde \tau_0 = \tilde \sigma_0').
$$

Analogously to $F^{\star}(x)$ (see \eqref{defF*}), define the matrix function $\tilde F^{\star}(x)$, the corresponding quasi-derivatives, etc.
For $y \in \mathcal D_{\tilde F}$ and $z \in \mathcal D_{\tilde F^{\star}}$, define the Lagrange bracket
$$
\langle z, y \rangle := z^{[2]} y - z' y' + z y^{[2]},
$$
where $z^{[2]} = z'' + (\tilde \sigma_0 + \tilde \tau_1) z$, $y^{[2]} = y'' - (\tilde \sigma_0 - \tilde \tau_1) y$. If $y$ and $z$ satisfy the relations $\tilde \ell(y) = \mu y$ and $\tilde \ell^{\star}(z) = \la z$, respectively, then
\begin{equation} \label{wron}
\frac{d}{dx} \langle z, y \rangle = (\mu - \la) z y.
\end{equation}

Denote
\begin{align} \label{defD}
\tilde D_{k,j}(x, \la, \mu) = \frac{\langle \tilde \Phi_k^{\star}(x, \la), \tilde \Phi_j(x, \mu)\rangle}{\mu - \la}, \quad k,j = 1, 2, 3, \\ \nonumber
\tilde D_{k,j,\langle 0 \rangle}(x, \la, \mu_0) = (\tilde D_{k,j}(x, \la, \mu))_{\langle 0 \rangle, \, \mu = \mu_0}.
\end{align}

Using \eqref{wron} and the initial conditions on $\tilde \Phi^{\star}_k$ and $\tilde \Phi_j$ at $x = 0$, we obtain
\begin{align} \label{D32}
& \tilde D_{k,j}(x, \la, \mu) = \int_0^x \tilde \Phi_k^{\star}(t, \la) \tilde \Phi_j(t, \mu) \, dt, \quad (k,j) = (2,3), (3,2), (3,3), \\ \label{D22}
& \tilde D_{2,2}(x, \la, \mu) = \frac{1}{\la - \mu} + \int_0^x \tilde \Phi_2^{\star}(t, \la) \tilde \Phi_2(t, \mu) \, dt.
\end{align}

Introduce the notations
\begin{gather} \nonumber
    V := \{ (n,k,\eps) \colon n \in \mathbb N, \, k = 1, 2, \, \eps = 0, 1 \},  \\ \nonumber
    \la_{n,k,0} := \la_{n,k}, \quad \la_{n,k,1} := \tilde \la_{n,k}, \quad \be_{n,k,0} := \be_{n,k}, \quad \be_{n,k,1} := \tilde \be_{n,k}, \\ \nonumber
    \vv_{n,k,\eps}(x) := \Phi_{k+1, \langle 0 \rangle}(x, \la_{n,k,\eps}), \quad \tilde \vv_{n,k,\eps}(x) := \tilde \Phi_{k+1}(x, \la_{n,k,\eps}), \quad (n,k,\eps) \in V, \\ \label{defG}
    \tilde G_{(n,k,\eps), (n_0,k_0,\eps_0)}(x) := 
    \begin{cases}
        (\be_{n,2,0} \tilde D_{2,k_0+1,\langle 0 \rangle} - \ga_n \tilde D_{3,k_0+1})(x, \la_{n,2,0}, \la_{n_0, k_0, \eps_0}), \:\: \text{if} \:\: n \in K, \, k = 2, \, \eps = 0, \\
        (-1)^k \beta_{n,k,\eps} \tilde D_{4-k, k_0+1}(x, \la_{n,k,\eps}, \la_{n_0,k_0,\eps_0}), \quad \text{otherwise}.
    \end{cases}
\end{gather}

Note that the functions $\Phi_2(x, \la)$ and $\tilde \Phi_2(x, \la)$ have the poles $\{ \la_{n,2,0} \}$ and $\{ \la_{n,2,1} \}$, respectively, and the functions $\Phi_3(x, \la)$ and $\tilde \Phi_3(x, \la)$ are entire, so $\langle 0 \rangle$ is absent in the definition of $\tilde \vv_{n,k,\eps}(x)$ and can be removed in the definition of $\varphi_{n,k,\eps}(x)$ in all the cases except $n \in K$, $k = 1$, $\eps = 0$. Analogously, in view of \eqref{D32} and \eqref{D22}, 
$$
\tilde D_{k,j}(x, \la, \mu) \:\: \textit{is analytic for} \:\: 
\begin{cases}
\la \ne \la_{n,1,1}, \, \mu \ne \la_{n,2,1}, \, \la \ne \mu, \quad (k,j) = (2,2), \\
\la \ne \la_{n,1,1}, \quad (k,j) = (2,3), \\
\mu \ne \la_{n,2,1}, \quad (k,j) = (3,2), \\
\text{all $\la$ and $\mu$}, \quad (k,j) = (3,3).
\end{cases}
$$

Hence, the functions $\tilde G_{(n,k,\eps),(n_0,k_0,\eps_0)}(x)$ are correctly defined for $(n,k,\eps), (n_0,k_0,\eps_0) \in V$ and $\langle 0 \rangle$ is necessary only for $n \in K$, $(n,k,\eps) = (n,2,0)$, $(n_0,k_0,\eps_0) = (n,1,0)$.

The following proposition has been proved in \cite{Bond22-alg} by the contour integral method.

\begin{prop}[\cite{Bond22-alg}]
For $(n_0,k_0,\eps_0) \in V$, $x \in [0,1]$, the following relation holds:
\begin{equation} \label{infphi}
\vv_{n_0, k_0,\eps_0}(x) = \tilde \vv_{n_0,k_0,\eps_0}(x) + \sum_{(n,k,\eps) \in V}(-1)^{\eps} \vv_{n,k,\eps}(x) \tilde G_{(n,k,\eps), (n_0, k_0,\eps_0)}(x).
\end{equation}
\end{prop}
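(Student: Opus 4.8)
The plan is to establish \eqref{infphi} by the contour-integral technique of the method of spectral mappings, with $x \in [0,1]$ and $\mu_0 := \la_{n_0,k_0,\eps_0}$ fixed throughout. The starting point, taken from \cite{Bond22-alg} and reconstructed for the third-order case, is an integral representation of the difference $\Phi_{k_0+1}(x,\mu_0) - \tilde\Phi_{k_0+1}(x,\mu_0)$ through the spectral mapping
$$\Psi(x,\la) := \sum_{k = 1,2} (-1)^{k+1}\, \Phi_{k+1}(x,\la)\, \hat M_{k+1,k}(\la)\, \tilde D_{4-k,\,k_0+1}(x,\la,\mu_0), \qquad \hat M := M - \tilde M,$$
where $\tilde D_{k,j}$ is the bracket kernel \eqref{defD}. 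This function pairs the actual Weyl solutions $\Phi_{k+1}$ (which carry the unknown coefficients) with purely model-dependent quantities; its derivation rests on the relation \eqref{PhiCM} between the Weyl solutions and the fundamental system, together with the Lagrange-bracket identity \eqref{wron}. I would first reproduce this representation in our setting and verify that $\Psi(x,\cdot)$ is meromorphic in $\la$ with poles only at the points $\la_{n,k,\eps}$.

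Next I would apply the Residue Theorem on an expanding family of circles $\gamma_N = \{ |\la| = R_N \}$, the radii $R_N \to \infty$ being chosen by means of the eigenvalue asymptotics \eqref{asymptla} so as to stay uniformly bounded away from every $\la_{n,k,\eps}$. For an index $n \notin K$ the factors $\Phi_{k+1}(x,\la)$ and $\tilde D_{4-k,k_0+1}(x,\la,\mu_0)$ are analytic at $\la_{n,k,\eps}$, whereas $\hat M_{k+1,k}$ has a simple pole there: one coming from $M$ at $\la_{n,k}$ (with residue $-\be_{n,k}$, by \eqref{defbeta}) when $\eps = 0$, and one coming from $-\tilde M$ at $\tilde\la_{n,k}$ (with residue $+\tilde\be_{n,k}$) when $\eps = 1$. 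Hence $\Res_{\la_{n,k,\eps}}\Psi$ equals $\vv_{n,k,\eps}(x)$ multiplied by $(-1)^{k}\be_{n,k,\eps}\tilde D_{4-k,k_0+1}(x,\la_{n,k,\eps},\mu_0)$, up to the sign distinguishing $M$ from $-\tilde M$; matching that sign against the factor $(-1)^{\eps}$ reproduces exactly the generic branch of $\tilde G_{(n,k,\eps),(n_0,k_0,\eps_0)}$ in \eqref{defG}. Since the representation identifies $\Phi_{k_0+1,\langle 0\rangle}(x,\mu_0) - \tilde\Phi_{k_0+1}(x,\mu_0) = \vv_{n_0,k_0,\eps_0} - \tilde\vv_{n_0,k_0,\eps_0}$ with $\tfrac{1}{2\pi i}\oint_{\gamma_N}\Psi\,d\la$ in the limit, summing the residues inside $\gamma_N$ and passing to $N \to \infty$ yields \eqref{infphi}.

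The main difficulty is twofold. First, the degenerate indices $n \in K$, where $\la_{n,1} = \la_{n,2} =: \la_n$, demand a separate residue computation. Here $\Phi_2(x,\la)$ itself is singular at $\la_n$ (the pole of $M_{3,2}$ in \eqref{PhiCM}), and by Lemma~\ref{lem:weight} one of $\be_{n,1}, \be_{n,2}$ vanishes while $\ga_n$ is given by \eqref{defga1}--\eqref{defga2}; extracting the residue then couples the Laurent coefficients of the $(n,1,0)$ and $(n,2,0)$ contributions. This is precisely why only the constant term $\Phi_{2,\langle 0\rangle}$ survives in $\vv_{n,1,0}$ and why the coefficient for $(n,2,0)$ acquires the two-term form $\be_{n,2,0}\tilde D_{2,k_0+1,\langle 0\rangle} - \ga_n \tilde D_{3,k_0+1}$ of \eqref{defG}; I would disentangle these using the residue relations of Lemma~\ref{lem:WSD} and Corollary~\ref{cor:M*}. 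Second, one must justify the limit $N \to \infty$: that the residue series converges in $m$ uniformly in $x$ and that the far arcs of $\gamma_N$ contribute the asserted leading term. This rests on the Birkhoff-type estimates of Lemma~\ref{lem:estPhi}, by which, in each closed sector $\overline{\Gamma_s}$, the exponential factors carried by $\Phi_{k+1}$ and by the adjoint solution inside $\tilde D$ combine---owing to the ordering \eqref{order} and the extra decay $|\mu_0 - \la|^{-1}$ in \eqref{defD}---into an integrand that decays fast enough along $\gamma_N$. I expect this sectorial control of the exponentials, together with the $n \in K$ bookkeeping, to be the crux of the argument.
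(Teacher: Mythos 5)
Your proposal follows essentially the same route as the paper, which gives no proof of its own but attributes the result to \cite{Bond22-alg} ``by the contour integral method'': you build the spectral mapping from $\Phi_{k+1}\hat M_{k+1,k}\tilde D_{4-k,k_0+1}$, collect residues at $\la_{n,k,\eps}$ (which correctly reproduce the generic branch of \eqref{defG} and, for $n\in K$, the $\ga_n$ term with the $\langle 0\rangle$ regularization), and control the expanding contours via Lemma~\ref{lem:estPhi}. This is a faithful reconstruction of the cited argument; no substantive deviation or gap.
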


The relations \eqref{infphi} can be treated as an infinite linear system of equations with respect to $\vv_{n,k,\eps}(x)$, $(n,k,\eps) \in V$. The elements $\tilde \vv_{n_0,k_0,\eps_0}(x)$ and $G_{(n,k,\eps), (n_0, k_0,\eps_0)}(x)$ are constructed by using the model problem $\tilde {\mathcal T}$ and the spectral data $\mathfrak S$ and $\tilde{\mathfrak S}$ of the both problems, while the elements $\vv_{n,k,\eps}(x)$ are related to the desired coefficients $\mathcal T$. Thus, the system \eqref{infphi} can be used for solving Inverse Problem~\ref{ip:main}. However, it is inconvenient to use \eqref{infphi} as the main equations of the inverse problem, because the series in \eqref{infphi} converges only ``with brackets'':
$$
\sum_{(n,k,\eps) \in V} = \sum_{(n,k)} \left( \sum_{\eps = 0, 1} (\dots)\right).
$$

Therefore, in \cite{Bond22-alg}, the system \eqref{infphi} has been transformed to a linear equation in the Banach space $m$ of bounded infinite sequences. Let us provide that transform. 

Define the numbers $\{ \xi_n \}$ which characterize ``the difference'' of the spectral data $\mathfrak S$ and $\tilde {\mathfrak S}$:
\begin{equation} \label{defxi}
\xi_n := \sum_{k = 1, 2} (n^{-2}|\la_{n,k} - \tilde \la_{n,k}| + n^{-3}|\beta_{n,k} - \tilde \be_{n,k}|), \quad n \in \mathbb N.
\end{equation}

It follows from the asymptotics \eqref{asymptla} that $\{ n \xi_n \} \in l_2$. By Lemma~7 of \cite{Bond22-alg}, the following estimates hold:
\begin{equation} \label{estvv}
|\vv_{n,k,\eps}(x)| \le C w_{n,k}(x), \quad |\vv_{n,k,0}(x) - \vv_{n,k,1}(x)| \le C w_{n,k}(x) \xi_n, 
\end{equation}
where 
$$
w_{n,k}(x) := n^{-k} \exp(-xl \cot (k\pi/n)),
$$
and the constant $C$ does not depend on $x$, $n$, $k$, and $\eps$. 

Introduce the notations
\begin{equation} \label{defpsi}
\begin{bmatrix}
\psi_{n,k,0}(x) \\ \psi_{n,k,1}(x)
\end{bmatrix} := 
w_{n,k}^{-1}(x)
\begin{bmatrix}
\xi_n^{-1} & -\xi_n^{-1} \\ 0 & 1
\end{bmatrix}
\begin{bmatrix}
\vv_{n,k,0}(x) \\ \vv_{n,k,1}(x)
\end{bmatrix}, 
\end{equation}
\begin{multline} \label{defR}
\begin{bmatrix}
\tilde R_{(n_0,k_0,0),(n,k,0)}(x) & \tilde R_{(n_0,k_0,0),(n,k,1)}(x) \\
\tilde R_{(n_0,k_0,1),(n,k,0)}(x) & \tilde R_{(n_0,k_0,1),(n,k,1)}(x)
\end{bmatrix} := \\ 
\frac{w_{n,k}(x)}{w_{n_0,k_0}(x)}
\begin{bmatrix}
\xi_{n_0}^{-1} & -\xi_{n_0}^{-1} \\ 0 & 1
\end{bmatrix}
\begin{bmatrix}
\tilde G_{(n,k,0),(n_0,k_0,0)}(x) & \tilde G_{(n,k,1),(n_0,k_0,0)}(x) \\
\tilde G_{(n,k,0),(n_0,k_0,1)}(x) & \tilde G_{(n,k,1),(n_0,k_0,1)}(x)
\end{bmatrix}
\begin{bmatrix}
\xi_n & 1 \\ 0 & -1
\end{bmatrix},
\end{multline}
and define $\tilde \psi_{n,k,\eps}(x)$ analogously to $\psi_{n,k,\eps}(x)$.

For brevity, put $\psi_v(x) := \psi_{n,k,\eps}(x)$, $\tilde \psi_v(x) := \tilde \psi_{n,k,\eps}(x)$, $\tilde R_{v_0,v}(x) := \tilde R_{(n_0,k_0,\eps_0),(n,k,\eps)}(x)$, $v = (n,k,\eps)$, $v_0 = (n_0,k_0,\eps_0)$, $v,v_0 \in V$. Then, the relations \eqref{infphi} can be rewritten as follows:
\begin{equation} \label{sumpsi}
    \psi_{v_0}(x) = \tilde \psi_{v_0}(x) + \sum_{v \in V} \tilde R_{v_0,v}(x) \psi_v(x), \quad v_0 \in V
\end{equation}

In \cite{Bond22-alg}, the following estimates have been obtained:
\begin{equation} \label{estpsiR}
|\psi_v(x)|, |\tilde \psi_v(x)| \le C, \quad
|\tilde R_{v_0,v}(x)| \le \frac{C\xi_n}{|n-n_0| + 1}, \quad v,v_0 \in V,
\end{equation}

It follows from \eqref{estpsiR} that the series in \eqref{sumpsi} converges absolutely and uniformly with respect to $x \in [0,1]$.

Consider the Banach space $m$ of bounded infinite sequences $\al = [\al_v]_{v \in V}$ with the norm $\| \al \|_m = \sum\limits_{v \in V} |\al_v|$. Obviously, $\psi(x), \tilde \psi(x) \in m$ for each fixed $x \in [0,1]$. Define the linear operator $\tilde R(x) = [\tilde R_{v_0,v}(x)]_{v_0, v \in V}$ acting on an element $\al = [\al_v]_{v \in V} \in m$ by the following rule:
\begin{equation} \label{Ral}
[\tilde R(x) \al]_{v_0} = \sum_{v \in V} \tilde R_{v_0,v}(x) \al_v, \quad v_0 \in V.
\end{equation}

The main result of this section is formulated as follows.

\begin{prop}[\cite{Bond22-alg}] \label{prop:maineq}
For each fixed $x\in[0,1]$, the linear operator $\tilde R(x)$ is compact in $m$ and can be approximated by finite-rank operators. Furthermore, the following relation holds
\begin{equation} \label{main}
    (I - \tilde R(x)) \psi(x) = \tilde \psi(x), \quad x \in [0,1],
\end{equation}
where $I$ is the unit operator in $m$.
\end{prop}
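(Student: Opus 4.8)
The plan is to treat the two assertions of the proposition separately, since the second one — the main equation \eqref{main} — is essentially a reformulation of the already-established infinite system \eqref{sumpsi}, while the genuine analytic content lies in the compactness of $\tilde R(x)$. First I would record that $\psi(x),\tilde\psi(x)\in m$ for each fixed $x$, which is immediate from the uniform bounds $|\psi_v(x)|,|\tilde\psi_v(x)|\le C$ in \eqref{estpsiR}. Reading \eqref{sumpsi} componentwise and recognizing the sum $\sum_{v\in V}\tilde R_{v_0,v}(x)\psi_v(x)$ as $[\tilde R(x)\psi(x)]_{v_0}$ via the definition \eqref{Ral}, the system \eqref{sumpsi} becomes exactly $\psi(x)=\tilde\psi(x)+\tilde R(x)\psi(x)$, i.e. \eqref{main}. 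The only point to verify is that the defining series for $[\tilde R(x)\psi(x)]_{v_0}$ converges, which is guaranteed by the absolute and uniform convergence already noted after \eqref{estpsiR}.

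For boundedness and compactness of $\tilde R(x)$, the key input is the entry estimate $|\tilde R_{v_0,v}(x)|\le C\xi_n/(|n-n_0|+1)$ from \eqref{estpsiR} together with $\{\xi_n\}\in l_1$. The latter follows from $\{n\xi_n\}\in l_2$ by Cauchy--Schwarz, since $\sum_n\xi_n=\sum_n n^{-1}(n\xi_n)\le\|\{n^{-1}\}\|_{l_2}\,\|\{n\xi_n\}\|_{l_2}<\infty$. Writing $v=(n,k,\eps)$ and summing the four terms attached to a given $n$, I would estimate the $v_0$-th row sum by
\[
\sum_{v\in V}|\tilde R_{v_0,v}(x)|\le 4C\sum_{n=1}^{\infty}\frac{\xi_n}{|n-n_0|+1}\le 4C\sum_{n=1}^{\infty}\xi_n=:C_0,
\]
uniformly in $v_0$ and in $x$. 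Consequently, for any $\al\in m$ one has $|[\tilde R(x)\al]_{v_0}|\le\sum_v|\tilde R_{v_0,v}(x)|\,|\al_v|\le C_0\|\al\|_m$, so $\tilde R(x)$ is bounded on $m$ with $\|\tilde R(x)\|\le C_0$.

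To obtain compactness, I would approximate $\tilde R(x)$ by the finite-rank operators $\tilde R^N(x)$ obtained by truncating the columns, setting $\tilde R^N_{v_0,v}(x):=\tilde R_{v_0,v}(x)$ for $n\le N$ and $0$ for $n>N$. Each $\tilde R^N(x)$ is a finite sum of rank-one operators $\al\mapsto\al_v\,[\tilde R_{v_0,v}(x)]_{v_0}$ over the finitely many $v$ with $n\le N$, and every column $[\tilde R_{v_0,v}(x)]_{v_0}$ lies in $m$ because $|\tilde R_{v_0,v}(x)|\le C\xi_n$ is bounded in $v_0$; hence $\tilde R^N(x)$ is finite-rank, so compact. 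The remainder is controlled exactly as the row sum above, retaining only $n>N$:
\[
\|\tilde R(x)-\tilde R^N(x)\|\le\sup_{v_0}\sum_{v:\,n>N}|\tilde R_{v_0,v}(x)|\le 4C\sum_{n>N}\xi_n\xrightarrow[N\to\infty]{}0,
\]
the convergence being uniform in $x$ thanks to $\{\xi_n\}\in l_1$. Being a norm limit of finite-rank operators, $\tilde R(x)$ is compact, which completes both claims.

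The steps above are routine bookkeeping; the substantive obstacle — already discharged in \cite{Bond22-alg} and therefore assumed here — is the derivation of the estimates \eqref{estvv}--\eqref{estpsiR} and of the system \eqref{infphi} itself. In particular, the decisive factor $\xi_n$ in the bound on $\tilde R_{v_0,v}$ is produced by the similarity transformation \eqref{defpsi}--\eqref{defR}, whose right factor carries a $\xi_n$ and whose difference structure exploits the near-coincidence of $\la_{n,k,0}$ and $\la_{n,k,1}$ reflected in \eqref{estvv}, while the exponential weights hidden in $w_{n,k}$ are what keep the Wronskian-type kernels $\tilde D_{k,j}$ in \eqref{defG} uniformly bounded. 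Reproving those facts from scratch would require the Birkhoff-type asymptotics underlying Lemma~\ref{lem:estPhi} and the contour-integration argument that yields \eqref{infphi}; granting them, the compactness of $\tilde R(x)$ and the main equation \eqref{main} follow by the elementary summation estimates sketched here.
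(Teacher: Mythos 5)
Your argument is correct, and it is the standard one: the paper itself gives no proof of this proposition but imports it from \cite{Bond22-alg}, where the reasoning is exactly the reduction of \eqref{sumpsi} to \eqref{main} via \eqref{Ral} plus the row-sum/truncation estimates you carry out (the genuine analytic content being the estimates \eqref{estpsiR}, which you correctly treat as given). One small point worth flagging: your boundedness and compactness estimates implicitly use the supremum norm on $m$ (you bound $|\al_v|$ by $\|\al\|_m$ and control row sums), which is the correct reading --- the norm $\|\al\|_m=\sum_{v}|\al_v|$ as literally printed in the paper is a typo, since with the $\ell_1$ norm one would instead need the column sums $\sum_{v_0}|\tilde R_{v_0,v}(x)|$, and these diverge logarithmically under \eqref{estpsiR}.
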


The relation \eqref{main} is called the main equation of Inverse Problem~\ref{ip:main}. It plays an important role in the proofs of Theorems~\ref{thm:nsc}, \ref{thm:loc}, and~\ref{thm:suff} in the next sections.

\section{Proofs of Theorems~\ref{thm:nsc} and~\ref{thm:loc}} \label{sec:proof}

In this section, we prove Theorem~\ref{thm:nsc} on the necessary and sufficient conditions of the inverse problem solvability. Then, as a corollary, we obtain Theorem~\ref{thm:loc} on the local solvability and stability.

Suppose that $\mathfrak S = (\{ \la_{n,k} \}_{n \in \mathbb N, \, k = 1, 2}, \{ \beta_{n,k} \}_{n \in \mathbb N, \, k = 1, 2}, \{ \ga_n \}_{n \in K})$ are the spectral data of the corresponding boundary value problems for equation \eqref{eqv} with the coefficients $\mathcal T = (\tau_0,\tau_1) \in W$. The necessity of the conditions 1 and 2 in Theorem~\ref{thm:nsc} (asymptotics and structural properties of the spectral data) is given by Lemmas~\ref{lem:weight} and~\ref{lem:asympt}. The invertibility of the operator $(I - \tilde R(x))$ from the main equation \eqref{main} has been proved in \cite{Bond22-alg}. The inverse operator $(I - \tilde R(x))^{-1}$ has been found explicitly in the case when the differential expression coefficients are known (see Theorem~1 in \cite{Bond22-alg}). Thus, the necessity part of Theorem~\ref{thm:nsc} has been already proved. Therefore, in this section, we focus on the proof of the sufficiency.

Let $\mathfrak S$ be any numbers satisfying the conditions 1--3 of Theorem~\ref{thm:nsc}. We have to prove the existence of some coefficients $\mathcal T = (\tau_0, \tau_1)$ such that the numbers $\mathfrak S$ are their spectral data. Let us outline the proof.

\begin{enumerate}
    \item The solution $\psi(x)$ of the main equation \eqref{main} is constructed and its properties are studied.
    \item Using the entries of $\psi(x)$, we find the functions $\tau_0$ and $\tau_1$ by the reconstruction formulas \eqref{rec1} and \eqref{rec0}.
    \item We prove that $\tau_0 \in W_2^{-1}(0,1)$ and $\tau_1 \in L_2(0,1)$ (Lemma~\ref{lem:series}).
    \item We prove that the initially given numbers $\mathfrak S$ are the spectral data of equation \eqref{eqv} with the constructed coefficients $\tau_0$ and $\tau_1$ (Lemma~\ref{lem:sdt}). For this purpose, the approximation approach is used.
    \begin{enumerate}
        \item We define the ``truncated'' spectral data $\mathfrak S^N$ by \eqref{defSN} and \eqref{deflaN} and prove the unique solvability of the corresponding main equation, which turns into a finite linear system (Lemma~\ref{lem:solveN}).
        \item We construct the functions $\tau_1^N$ and $\tau_0^N$ analogous to $\tau_1$ and $\tau_0$, respectively, and the functions $\Phi^N_k(x, \la)$, $k = 1, 2, 3$, by the formula \eqref{defPhiN}. Then, we prove that $\Phi^N_k(x, \la)$ are the Weyl solutions and $\mathfrak S^N$ are the spectral data of $\mathcal T^N = (\tau_0^N, \tau_1^N)$ (Lemmas~\ref{lem:PhiNeq} and~\ref{lem:PhiNbc}). The advantage of considering the ``truncated'' data $\mathfrak S^N$ is that the series for $\Phi^N_k(x, \la)$ are finite and so these functions can be easily substituted into equation \eqref{eqv} with the coefficients $\mathcal T^N$.
        \item It is shown that $\tau_1^N \to \tau_1$ and $\tau_0^N \to \tau_0$ in the corresponding spaces as $N \to \infty$ (Lemma~\ref{lem:limtau}).
        \item We prove the stability of the spectral data with respect to $\tau_1$ and $\tau_0$ (Lemma~\ref{lem:limsd}).
        \item The fact that $\mathfrak S^N \to \mathfrak S$ finishes the proof of Lemma~\ref{lem:sdt}.
    \end{enumerate}
\end{enumerate}

Now, proceed to the detailed proof. By virtue of the condition~3, the operator $(I - \tilde R(x))$ has a bounded inverse. Therefore, the main equation \eqref{main} has a unique solution in $m$ for each fixed $x \in [0,1]$:
$$
\psi(x) = (I - \tilde R(x))^{-1} \tilde \psi(x), \quad \psi(x) = [\psi_v(x)]_{v \in V}.
$$

Note that the estimates \eqref{estpsiR} on the entries $\tilde \psi_v(x)$ and $\tilde R_{v_0, v}(x)$ are similar to the ones for the Sturm-Liouville operator with regular potential $q \in L_2$ (see, e.g., \cite[Section~1.6.1]{FY01}). Therefore, repeating the proof arguments of Lemma~1.6.7 in \cite{FY01}, we easily show that the entries $\psi_v(x)$ of the main equation solution have the following properties:
\begin{equation} \label{proppsi}
\psi_v(x) \in C^1[0,1], \quad |\psi^{(\nu)}(x)| \le C n^{\nu}, \quad 
|\psi_v(x) - \tilde \psi_v(x)| \le C \chi_n, \quad |\psi_v'(x) - \tilde \psi_v'(x)| \le C, 
\end{equation}
where $v = (n,k,\eps) \in V$, $x \in [0,1]$, $\nu = 0, 1$, and
$$
\chi_n := \left( \sum_{k = 1}^{\infty} \frac{1}{k^2 (|n-k|+1)^2} \right)^{1/2}, \quad \{ \chi_n \} \in l_2.
$$

Using the entries $\psi_v(x)$, construct the functions $\vv_{n,k,\eps}(x)$ by inverting the formula \eqref{defpsi}:
\begin{equation} \label{defvvpsi}
\begin{bmatrix}
    \vv_{n,k,0}(x) \\ \vv_{n,k,1}(x)
\end{bmatrix} = w_{n,k}(x)
\begin{bmatrix}
    \xi_n & 1 \\ 0 & 1
\end{bmatrix}
\begin{bmatrix}
    \psi_{n,k,0}(x) \\ \psi_{n,k,1}(x).
\end{bmatrix}
\end{equation}

It follows from \eqref{proppsi} that
\begin{gather} \nonumber
    \vv_{n,k,\eps}(x) \in C^1[0,1], \quad |\vv^{(\nu)}_{n,k,\eps}(x)| \le C w_{n,k}(x) n^{\nu}, \quad
    |\vv^{(\nu)}_{n,k,0}(x) - \vv^{(\nu)}_{n,k,1}(x)| \le C w_{n,k}(x) n^{\nu} \xi_n, \\ \label{estvv1}
\left.\begin{array}{c}
    |\vv_{n,k,\eps}^{(\nu)}(x) - \tilde \vv^{(\nu)}_{n,k,\eps}(x)| \le C w_{n,k}(x) \chi_n^{1 - \nu}, \\
    |\vv_{n,k,0}^{(\nu)}(x) - \vv_{n,k,1}^{(\nu)}(x) - \tilde \vv_{n,k,0}^{(\nu)}(x) + \tilde \vv_{n,k,1}^{(\nu)}(x)| \le C w_{n,k}(x) \xi_n \chi_n^{1 - \nu},
\end{array}\right\}
\end{gather}
for $(n,k,\eps) \in V$, $\nu = 0, 1$, $x \in [0,1]$.

Using the functions $\vv_{n,k,\eps}(x)$, find $\tau_1$ and $\tau_0$ by the reconstruction formulas (see \cite[Section~4.3]{Bond22-alg}):
\begin{align} \label{rec1}
    & \tau_1 := \tilde \tau_1 - \frac{3}{2} \sum_{V} (-1)^{\eps}(\vv'_{n,k,\eps} \tilde \eta_{n,k,\eps} + \vv_{n,k,\eps} \tilde \eta'_{n,k,\eps}), \\ \label{rec0}
    & \tau_0 := \tilde \tau_0 - \hat \tau_1' - 3 \frac{d}{dx}  \sum_{V}(-1)^{\eps} \vv_{n,k,\eps}' \tilde \eta_{n,k,\eps} - 2 \hat \tau_1 \sum_{V} (-1)^{\eps}\vv_{n,k,\eps} \tilde \eta_{n,k,\eps},
\end{align}
where $\hat \tau_1 = \tau_1 - \tilde \tau_1$,
\begin{equation} \label{defeta}
\eta_{n,k,\eps}(x) := 
\begin{cases}
(\beta_{n,2,0} \tilde \Phi_2^{\star} - \ga_n \tilde \Phi_3^{\star}) (x, \la_{n,2,0}), \quad n \in K, \, k = 2, \, \eps = 0, \\
(-1)^k \beta_{n,k,\eps} \tilde \Phi_{4-k}^{\star}(x, \la_{n,k,\eps}), \quad \text{otherwise}.
\end{cases} 
\end{equation}

The functions $\tilde \eta_{n,k,\eps}(x)$ are defined by using the model problem and the spectral data $\mathfrak S$, so they can be estimated similarly to \eqref{estvv}:
\begin{equation} \label{esteta}
\tilde \eta_{n,k,\eps} \in C^1[0,1], \quad
|\tilde \eta_{n,k,\eps}^{(\nu)}(x)| \le C w_{n,k}^{-1}(x) n^{\nu}, \quad
|\tilde \eta_{n,k,0}^{(\nu)}(x) - \tilde \eta_{n,k,1}^{(\nu)}(x)| \le C w_{n,k}^{-1}(x) n^{\nu} \xi_n,
\end{equation}
for $(n,k,\eps) \in V$, $\nu = 0, 1$, $x \in [0,1]$.

\begin{lem} \label{lem:series}
The series \eqref{rec1} converges in $L_2(0,1)$ and the formula \eqref{rec0} defines the function of $W_2^{-1}(0,1)$.
\end{lem}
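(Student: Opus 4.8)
The plan is to prove convergence of the two series by bundling the $\eps=0,1$ terms into pairs and exploiting the telescoping structure already built into the definitions. Observe that in both \eqref{rec1} and \eqref{rec0} the summand carries the factor $(-1)^{\eps}$, so for each fixed $(n,k)$ the contribution is a difference of the $\eps=0$ and $\eps=1$ terms. The point is that the individual products $\vv_{n,k,\eps}\tilde\eta_{n,k,\eps}$ are only $O(1)$ uniformly in $x$ (since $|\vv_{n,k,\eps}|\le C w_{n,k}$ and $|\tilde\eta_{n,k,\eps}|\le C w_{n,k}^{-1}$ by \eqref{estvv} and \eqref{esteta}), which is far too weak for summability; but the difference of the paired terms gains a decisive factor $\xi_n$. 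Concretely, I would write
\[
\vv_{n,k,0}\tilde\eta_{n,k,0}-\vv_{n,k,1}\tilde\eta_{n,k,1}
=(\vv_{n,k,0}-\vv_{n,k,1})\tilde\eta_{n,k,0}
+\vv_{n,k,1}(\tilde\eta_{n,k,0}-\tilde\eta_{n,k,1}),
\]
and estimate each piece using the second inequalities in \eqref{estvv1} and \eqref{esteta} together with the fact that the $w_{n,k}$ factors cancel. This yields a bound of the form $C\xi_n$ for each paired term (and $C n\,\xi_n$ when one derivative falls on a factor), and similarly $|\vv'_{n,k,\eps}\tilde\eta_{n,k,\eps}+\vv_{n,k,\eps}\tilde\eta'_{n,k,\eps}|$-type pairs will be controlled by $C n\,\xi_n$.

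For \eqref{rec1} specifically, after pairing I expect each term $\bigl(\vv'_{n,k,0}\tilde\eta_{n,k,0}+\vv_{n,k,0}\tilde\eta'_{n,k,0}\bigr)-\bigl(\vv'_{n,k,1}\tilde\eta_{n,k,1}+\vv_{n,k,1}\tilde\eta'_{n,k,1}\bigr)$ to be bounded pointwise in $x$ by $C n\,\xi_n$. Since $\{n\xi_n\}\in l_2$ (noted right after \eqref{defxi}), summing over $n$ and $k=1,2$ gives an $l_2$-summable majorant, so the series \eqref{rec1} converges in $L_2(0,1)$; in fact the partial sums form a Cauchy sequence in $L_2(0,1)$ because the tail norm is dominated by $\bigl(\sum_{n>N}(n\xi_n)^2\bigr)^{1/2}\to 0$. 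One subtlety to address is that the exponential weight $w_{n,k}(x)$ is not uniformly bounded in $x$, but in the paired differences the $w_{n,k}$ and $w_{n,k}^{-1}$ factors cancel exactly, so the resulting bounds are genuinely uniform in $x\in[0,1]$; this is what makes the $L_2$-estimate legitimate.

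For \eqref{rec0} the structure is analogous but complicated by the presence of the full $\frac{d}{dx}$ applied to the series $\sum_V(-1)^{\eps}\vv'_{n,k,\eps}\tilde\eta_{n,k,\eps}$. Here the plan is \emph{not} to differentiate termwise (the differentiated series need not converge in $L_2$), but rather to interpret $\frac{d}{dx}$ in the distributional sense: I would show that $g:=\sum_V(-1)^{\eps}\vv'_{n,k,\eps}\tilde\eta_{n,k,\eps}$ converges in $L_2(0,1)$ by the same pairing argument as above, so that $g\in L_2$ and hence $g'\in W_2^{-1}(0,1)$ by definition of the space. The remaining pieces of \eqref{rec0}, namely $-\hat\tau_1'$ and $-2\hat\tau_1\sum_V(-1)^{\eps}\vv_{n,k,\eps}\tilde\eta_{n,k,\eps}$, are handled by noting $\hat\tau_1=\tau_1-\tilde\tau_1\in L_2$ (from \eqref{rec1} and $\tilde\tau_1\in L_2$), so $\hat\tau_1'\in W_2^{-1}$, and the last series converges in $L_2$ again by pairing, whence its product with the bounded-in-$L_2$ function $\hat\tau_1$ must be checked to land in $L_2$ (or at least in $W_2^{-1}$). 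The main obstacle I anticipate is precisely this last term: a product of two $L_2$-functions is only $L_1$ in general, so to keep everything inside $W_2^{-1}$ I would either argue that $\hat\tau_1\cdot(L_2\text{-series})\in L_1\subset W_2^{-1}$ via the embedding $L_1(0,1)\hookrightarrow W_2^{-1}(0,1)$, or use the boundedness of the partial sums of $\sum_V(-1)^{\eps}\vv_{n,k,\eps}\tilde\eta_{n,k,\eps}$ more carefully. Verifying that these convergences combine to place $\tau_0$ in $W_2^{-1}(0,1)$, rather than merely in some larger distribution space, is the delicate point, and I would lean on the reconstruction-formula framework of \cite{Bond22-alg} to ensure the distributional derivative is taken consistently.
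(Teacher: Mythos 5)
Your pairing strategy is the right first move, but it cannot carry the whole proof, and the place where it breaks is exactly the place where the paper has to invoke an external result. After pairing the $\eps=0$ and $\eps=1$ terms, the summand of \eqref{rec1} that carries a derivative is bounded only by $C\,n\,\xi_n$ (each derivative costs a factor $n$, see \eqref{estvv1} and \eqref{esteta}), and the hypothesis is $\{n\xi_n\}\in l_2$, \emph{not} $l_1$. A pointwise majorant $\{c_n\}\in l_2$ does not give convergence of $\sum_n a_n(x)$ in $L_2(0,1)$: the triangle inequality controls the tail norm by $\sum_{n>N}c_n$, not by $\bigl(\sum_{n>N}c_n^2\bigr)^{1/2}$, and the Cauchy estimate you assert would require the terms to be (almost) orthogonal in $L_2$, which is not automatic and is not something the pairing argument provides. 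The same gap reappears in your treatment of $g=\sum_V(-1)^{\eps}\vv'_{n,k,\eps}\tilde\eta_{n,k,\eps}$ for \eqref{rec0}.

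The paper's proof is structured precisely to circumvent this. It splits the series into $\mathscr S_1$ (the terms containing the differences $\vv_{n,k,\eps}-\tilde\vv_{n,k,\eps}$) and $\mathscr S_2$ (the purely model-problem terms $\tilde\vv'_{n,k,\eps}\tilde\eta_{n,k,\eps}+\tilde\vv_{n,k,\eps}\tilde\eta'_{n,k,\eps}$). For $\mathscr S_1$ the pairing argument does close, but only because \eqref{estvv1} supplies an \emph{additional} smallness factor $\chi_n^{1-\nu}$ with $\{\chi_n\}\in l_2$, so the dangerous terms are bounded by $Cn\xi_n\chi_n$, summable by Cauchy--Schwarz; your argument never uses this factor. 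For $\mathscr S_2$ no such gain is available, and the $L_2$-convergence is taken from Lemma~8 of \cite{Bond22-alg}, which exploits the oscillatory structure of the model-problem Weyl solutions (a Bessel-type inequality) rather than termwise majorization; the analogous series $\mathscr T_2$ in \eqref{rec0} even converges only after subtracting regularization constants, a phenomenon invisible to pointwise bounds. Your handling of the remaining pieces of \eqref{rec0} is essentially sound: the series $\sum_V(-1)^{\eps}\vv_{n,k,\eps}\tilde\eta_{n,k,\eps}$ has paired bound $C\xi_n$ with no factor of $n$, hence converges absolutely and uniformly (since $\sum_n\xi_n<\infty$ by Cauchy--Schwarz), so its product with $\hat\tau_1\in L_2(0,1)$ lies in $L_2(0,1)$ and no appeal to $L_1\hookrightarrow W_2^{-1}$ is needed. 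But the two derivative-bearing series require the $\mathscr S_1/\mathscr S_2$ splitting and the cited lemma; without them the proof does not go through.
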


\begin{proof}
Represent the series in \eqref{rec1} as the sum of the two series
\begin{align*}
    & \mathscr S_1 := \sum_V (-1)^{\eps}((\vv'_{n,k,\eps} - \tilde \vv'_{n,k,\eps}) \tilde \eta_{n,k,\eps} + (\vv_{n,k,\eps} - \tilde \vv_{n,k,\eps}) \tilde \eta'_{n,k,\eps}), \\ 
    & \mathscr S_2 := \sum_V (-1)^{\eps}(\tilde \vv'_{n,k,\eps} \tilde \eta_{n,k,\eps} + \tilde \vv_{n,k,\eps} \tilde \eta'_{n,k,\eps}).
\end{align*}
Obviously,
\begin{align*}
    \mathscr S_1 = & \sum_{(n,k)} \bigl((\vv'_{n,k,0} - \vv'_{n,k,1} - \tilde \vv'_{n,k,0} + \tilde \vv'_{n,k,1}) \tilde \eta_{n,k,0} + (\vv'_{n,k,1} - \tilde \vv'_{n,k,1}) (\tilde \eta_{n,k,0} - \tilde \eta_{n,k,1}) \\
    & + (\vv_{n,k,0} - \vv_{n,k,1} - \tilde \vv_{n,k,0} + \tilde \vv_{n,k,1}) \tilde \eta'_{n,k,0} + (\vv_{n,k,1} - \tilde \vv_{n,k,1}) (\tilde \eta'_{n,k,0} - \tilde \eta'_{n,k,1})\bigr).
\end{align*}

Using the estimates \eqref{estvv1} and \eqref{esteta}, we obtain
\begin{equation} \label{estS1}
|\mathscr S_1| \le C \sum_{n = 1}^{\infty} \xi_n + C \sum_{n = 1}^{\infty} n \xi_n \chi_n.
\end{equation}

Recall that $\{ n \xi_n \} \in l_2$ and $\{ \chi_n \} \in l_2$. Hence, the both series in the right-hand side of \eqref{estS1} converge, so the series $\mathscr S_1$ converges absolutely and uniformly with respect to $x \in [0,1]$. For the series $\mathscr S_2$, the convergence in $L_2(0,1)$ follows from Lemma~8 in \cite{Bond22-alg}. Therefore, the series for $\tau_1(x)$ in \eqref{rec1} converges in $L_2(0,1)$. 

Thus $\hat \tau_1' \in W_2^{-1}(0,1)$. Let us study the convergence of the other terms in \eqref{rec0}. The series $\sum\limits_V \vv'_{n,k,\eps} \tilde \eta_{n,k,\eps}$ can be formally represented as $(\mathscr T_1 + \mathscr T_2)$, where
$$
\mathscr T_1 := \sum_V (\vv_{n,k,\eps}' - \tilde \vv_{n,k,\eps}') \tilde \eta_{n,k,\eps}, \quad
\mathscr T_2 := \sum_V \tilde \vv'_{n,k,\eps} \tilde \eta_{n,k,\eps}.
$$

The series $\mathscr T_1$, similarly to $\mathscr S_1$, converges absolutely and uniformly with respect to $x \in [0,1]$. By virtue of Lemma~8 from \cite{Bond22-alg}, the series $\mathscr T_2$ converges in $L_2(0,1)$ with some regularization constants $a_{n,k,\eps}$:
$$
\sum_V (\tilde \vv'_{n,k,\eps}(x) \tilde \eta_{n,k,\eps}(x) - a_{n,k,\eps}) \in L_2(0,1).
$$

Anyway, the constants $a_{n,k,\eps}$ are unimportant because of the differentiation in \eqref{rec0}. The series $\sum\limits_V (-1)^{\eps} \vv_{n,k,\eps} \tilde \eta_{n,k,\eps}$ converges absolutely and uniformly on $[0,1]$. Hence, the function $\tau_0$ defined by \eqref{rec0} belongs to $W_2^{-1}(0,1)$.
\end{proof}

Now consider equation \eqref{eqv} with the coefficients $\mathcal T = (\tau_0, \tau_1)$ defined via \eqref{rec1}--\eqref{rec0} by using the initially given data $\mathfrak S$. In order to finish the proof Theorem~\ref{thm:nsc}, it remains to prove the following lemma.

\begin{lem} \label{lem:sdt}
$\mathfrak S$ are the spectral data of $\mathcal T = (\tau_0, \tau_1)$.
\end{lem}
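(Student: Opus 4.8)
The plan is to verify Lemma~\ref{lem:sdt} through the approximation scheme already outlined in the proof strategy, which replaces the infinite spectral data $\mathfrak S$ by truncated data $\mathfrak S^N$ that agree with $\mathfrak S$ for indices $n \le N$ and coincide with the model data $\tilde{\mathfrak S}$ for $n > N$. For such truncated data the main equation reduces to a finite-dimensional linear system. First I would establish the unique solvability of this finite system (Lemma~\ref{lem:solveN}): since $(I-\tilde R(x))$ is invertible by condition~3 and the truncated operator $\tilde R^N(x)$ is a finite-rank perturbation that converges to $\tilde R(x)$ in operator norm as $N \to \infty$, invertibility of $(I - \tilde R^N(x))$ holds for all sufficiently large $N$ uniformly in $x \in [0,1]$. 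This yields truncated solution entries $\vv^N_{n,k,\eps}(x)$ and, via the reconstruction formulas \eqref{rec1}--\eqref{rec0} with finite sums, coefficients $\mathcal T^N = (\tau_0^N, \tau_1^N)$.

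The core of the argument is to prove that $\mathfrak S^N$ are genuinely the spectral data of $\mathcal T^N$. I would define the candidate Weyl solutions $\Phi_k^N(x,\la)$ by the finite-sum analogue of the representation underlying \eqref{infphi}, namely formula \eqref{defPhiN}. Because the series are finite, these functions and their quasi-derivatives can be differentiated term by term and substituted directly into equation \eqref{eqv} with coefficients $\mathcal T^N$; this is where one verifies, in Lemma~\ref{lem:PhiNeq}, that $\Phi_k^N$ actually solve the differential equation, and in Lemma~\ref{lem:PhiNbc} that they satisfy the correct boundary conditions at $x=0$ and $x=1$ so that they are the true Weyl solutions. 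The relations of Lemma~\ref{lem:WSD} and Corollary~\ref{cor:M*}, rephrased for the truncated problem, then identify the poles and residues of the associated Weyl functions, showing that the eigenvalues and weight numbers of $\mathcal L_1^N, \mathcal L_2^N$ are exactly the prescribed $\{\la_{n,k}^N, \be_{n,k}^N, \ga_n^N\}$.

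Finally I would pass to the limit $N \to \infty$. Using the uniform bounds on $\psi^N_v(x)$ inherited from \eqref{proppsi} and the norm convergence $\tilde R^N(x) \to \tilde R(x)$, one shows $\vv^N_{n,k,\eps} \to \vv_{n,k,\eps}$ and hence, through the reconstruction formulas and the convergence estimates of Lemma~\ref{lem:series}, that $\tau_1^N \to \tau_1$ in $L_2(0,1)$ and $\tau_0^N \to \tau_0$ in $W_2^{-1}(0,1)$ (Lemma~\ref{lem:limtau}). Then the stability of the spectral data under coefficient convergence (Lemma~\ref{lem:limsd})---which follows from the continuous dependence of the characteristic functions $\Delta_{k,k}$ and the residues \eqref{defbeta}, \eqref{defga1}, \eqref{defga2} on the coefficients---gives that the spectral data of $\mathcal T^N$ converge to those of $\mathcal T$. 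Since the spectral data of $\mathcal T^N$ are $\mathfrak S^N$ and $\mathfrak S^N \to \mathfrak S$ by construction, the spectral data of $\mathcal T$ must equal $\mathfrak S$, completing the proof.

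I expect the main obstacle to be Lemma~\ref{lem:PhiNeq}--\ref{lem:PhiNbc}, i.e.\ showing that the constructed $\Phi^N_k$ genuinely solve equation \eqref{eqv} with coefficients $\mathcal T^N$ and satisfy the Weyl boundary conditions. The difficulty flagged in the proof strategy is precisely that for the full data $\mathfrak S$ the series defining $\Phi_k$ do not admit term-by-term differentiation, so the equation cannot be checked directly; the truncation is introduced specifically to circumvent this. The delicate points are verifying that the finite-sum reconstruction is self-consistent with the quasi-derivative structure generated by $\tau_0^N, \tau_1^N$, and ensuring all estimates in the limiting step \eqref{estvv1}--\eqref{esteta} are uniform in $N$ so that the convergences in Lemmas~\ref{lem:limtau} and~\ref{lem:limsd} hold in the correct norms.
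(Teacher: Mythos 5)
Your proposal is correct and follows essentially the same route as the paper: truncation of the spectral data, unique solvability of the resulting finite system via the norm convergence $\tilde R^N(x) \to \tilde R(x)$, verification that the finite-sum functions $\Phi_k^N$ are genuine Weyl solutions of the truncated problem, and passage to the limit by combining $\tau_j^N \to \tau_j$ with the continuous dependence of the spectral data on the coefficients. The only point the paper makes explicit that you leave implicit is the intermediate conclusion that the limit problems $\mathcal L_k$ have simple spectra (so that the weight numbers $\be_{n,k}^{\bullet}$ and $\ga_n^{\bullet}$ are well defined before invoking the second and third assertions of Lemma~\ref{lem:limsd}), but this is a minor presentational difference rather than a gap.
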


For sufficiently large $N \in \mathbb N$ ($N \ge \max K$), define the data
\begin{gather} \label{defSN}
    \mathfrak S^N := (\{ \la_{n,k}^N \}_{n \in \mathbb N, \, k = 1, 2}, \{ \be_{n,k}^N \}_{n \in \mathbb N, \, k = 1, 2}, \{ \ga_n \}_{n \in K}), \\ \label{deflaN}
    \la_{n,k}^N := \begin{cases}
                        \la_{n,k}, \quad n \le N, \\
                        \tilde \la_{n,k}, \quad n > N,
                    \end{cases}
    \qquad
    \be_{n,k}^N := \begin{cases}
                        \be_{n,k}, \quad n \le N, \\
                        \tilde \be_{n,k}, \quad n > N.
                    \end{cases}    
\end{gather}

Repeat the arguments of Section~\ref{sec:equ} using the same model problem $\tilde {\mathcal T}$ and the data $\mathfrak S^N$ instead of $\mathfrak S$. Then, instead of the infinite system~\eqref{sumpsi}, we obtain the finite linear system
\begin{equation} \label{mainN}
\psi_{v_0}^N(x) = \tilde \psi_{v_0}(x) + \sum_{v \in V^N} \tilde R_{v_0,v}(x) \psi_v^N(x), \quad v_0 \in V^N, \quad x \in [0,1],
\end{equation}
where
$$
V^N := \{ (n,k,\eps) \in V \colon n \le N \}.
$$

\begin{lem} \label{lem:solveN}
For every sufficiently large $N$, the system \eqref{mainN} is uniquely solvable for each $x \in [0,1]$.
\end{lem}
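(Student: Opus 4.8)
The plan is to read the finite system \eqref{mainN} as the compression of the main equation \eqref{main} to the coordinate subspace indexed by $V^N$, and to derive its unique solvability from the invertibility of $(I - \tilde R(x))$ guaranteed by condition~3 of Theorem~\ref{thm:nsc}. Let $P_N$ be the projection of $m$ onto the coordinates $v = (n,k,\eps)$ with $n \le N$, and put $Q_N := I - P_N$. First I would note that, for $v, v_0 \in V^N$, the entries $\tilde R_{v_0,v}(x)$ built from the truncated data $\mathfrak S^N$ coincide with those built from $\mathfrak S$: by \eqref{deflaN} we have $\la_{n,k}^N = \la_{n,k}$, $\be_{n,k}^N = \be_{n,k}$ for $n \le N$, so the difference numbers satisfy $\xi_n^N = \xi_n$ for $n \le N$, while the functions $\tilde G$ entering \eqref{defR} involve only data with indices $\le N$ (recall $N \ge \max K$). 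Hence the matrix of \eqref{mainN} is exactly $[\tilde R_{v_0,v}(x)]_{v_0,v\in V^N}$, and since $P_N\tilde R(x)P_N$ acts as the identity on $\Ran Q_N$ and as this matrix on $\Ran P_N$, the finite system \eqref{mainN} is uniquely solvable if and only if the operator $(I - P_N\tilde R(x)P_N)$ is invertible in $m$; the latter equivalence also uses that $\Ran P_N$ is finite-dimensional.

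Next I would establish that the truncation error tends to zero uniformly in $x$, i.e. $\delta_N := \sup_{x\in[0,1]}\|\tilde R(x) - P_N\tilde R(x)P_N\|_{m\to m} \to 0$ as $N \to \infty$. Writing $\tilde R - P_N\tilde R P_N = Q_N\tilde R + P_N\tilde R Q_N$, it suffices to bound $\|Q_N\tilde R(x)\|$ and $\|\tilde R(x)Q_N\|$. Since the operator norm on $m$ equals $\sup_{v_0}\sum_{v}|\tilde R_{v_0,v}(x)|$, the estimate $|\tilde R_{v_0,v}(x)| \le C\xi_n/(|n-n_0|+1)$ from \eqref{estpsiR}, together with $\{n\xi_n\} \in l_2$ and the Cauchy--Schwarz inequality, gives the sums over $v$ controlled by $C\chi_{n_0}$ and their $n>N$ tails controlled by $C\bigl(\sum_{n>N}(n\xi_n)^2\bigr)^{1/2}$; both vanish as $N\to\infty$ uniformly in $x$, because the constant $C$ in \eqref{estpsiR} is independent of $x$ and $\{\chi_n\}\in l_2$ (see \eqref{proppsi}).

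Then I would obtain a uniform bound $C_0 := \sup_{x\in[0,1]}\|(I-\tilde R(x))^{-1}\|_{m\to m} < \infty$. This holds because $x \mapsto \tilde R(x)$ is continuous in the operator norm on the compact interval $[0,1]$ (the entries are continuous in $x$ and the tails of the sums $\sum_v|\tilde R_{v_0,v}(x)|$ are small uniformly in $x$, which yields norm continuity), $(I - \tilde R(x))$ is invertible for every $x$ by condition~3, and operator inversion is continuous on the set of invertible operators. Choosing $N_0$ so large that $C_0 \delta_N < 1$ for $N \ge N_0$, the factorization
\[
I - P_N\tilde R(x)P_N = (I - \tilde R(x))\bigl(I + (I-\tilde R(x))^{-1}(\tilde R(x) - P_N\tilde R(x)P_N)\bigr)
\]
exhibits the right-hand factor as invertible via the Neumann series, uniformly in $x$. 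Hence $(I - P_N\tilde R(x)P_N)$ is invertible for all $x\in[0,1]$ and all $N\ge N_0$, which by the reduction above gives the unique solvability of \eqref{mainN}.

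The main obstacle is the uniformity in $x$ of both ingredients: the smallness of the truncation error and the boundedness of the inverse. The former is delicate because the crude bound for the sums defining $\|\tilde R(x)\|$ only barely converges, so the tail estimate must exploit the $l_2$-summability of $\{n\xi_n\}$ and $\{\chi_n\}$ through Cauchy--Schwarz rather than through absolute summation; the latter requires promoting the pointwise invertibility of condition~3 to a uniform estimate by means of the norm-continuity of $x\mapsto \tilde R(x)$ and the compactness of $[0,1]$.
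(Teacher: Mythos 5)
Your proposal is correct and follows essentially the same route as the paper: replace $\tilde R(x)$ by a finite-rank truncation whose invertibility on $m$ is equivalent to the non-singularity of the finite matrix of \eqref{mainN}, show the truncation error is small in the operator norm of $m$ using $|\tilde R_{v_0,v}(x)| \le C\xi_n/(|n-n_0|+1)$ and $\{n\xi_n\}\in l_2$, and conclude by a Neumann-series perturbation of $(I-\tilde R(x))^{-1}$. The only (harmless) differences are that the paper truncates columns only, i.e.\ works with $\tilde R^N(x)=\tilde R(x)P_N$ rather than $P_N\tilde R(x)P_N$ (which is what makes its Corollary~\ref{cor:RN} immediate), and that you spell out the uniformity in $x$ of the bound on $(I-\tilde R(x))^{-1}$, a point the paper leaves implicit.
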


\begin{proof}
Consider an auxiliary operator
\begin{align} \nonumber
\tilde R^N(x) & = [\tilde R_{v_0,v}^N(x)]_{v_0,v \in V} \colon m \to m, \\
\label{defRN}
\tilde R^N_{(n_0,k_0,\eps_0),(n,k,\eps)}(x)  & = 
\begin{cases}
    0, \quad n > N, \\
    \tilde R_{(n_0,k_0,\eps_0),(n,k,\eps)}(x), \quad \text{otherwise}.
\end{cases}
\end{align}

Clearly, the operator $\tilde R^N(x)$ is finite-rank, and the operator $(I - \tilde R^N(x))$ has a bounded inverse if and only if the matrix of the system \eqref{mainN} is non-singular.

Using \eqref{estpsiR} and \eqref{defRN}, we estimate
$$
\| \tilde R(x) - \tilde R^N(x) \|_{m \to m} \le \sup_{n_0 \ge 1} \sum_{n = N+1}^{\infty} \frac{C \xi_n}{|n - n_0| + 1} \le \frac{C}{N} \sqrt{\sum_{n = N+1}^{\infty} (n \xi_n)^2}.
$$
Recall that $\{ n \xi_n \} \in l_2$. Hence 
$$
\| \tilde R(x) - \tilde R^N(x) \|_{m \to m} = o\left(N^{-1}\right), \quad N \to \infty.
$$

Since the operator $(I - \tilde R(x))$ has a bounded inverse, then so does $(I - \tilde R^N(x))$ for sufficiently large $N$. Consequently, equation \eqref{mainN} is uniquely solvable.
\end{proof}

The proof of Lemma~\ref{lem:solveN} readily implies the following corollary.

\begin{cor} \label{cor:RN}
The solution $\{ \psi_v^N(x) \}_{v \in V^N}$ of the finite system \eqref{mainN} can be found as the entries with the indices $v \in V^N$ of the solution $\psi^N(x) \in m$ of the equation
$$
(I - \tilde R^N(x)) \psi^N(x) = \tilde \psi(x), \quad x \in [0,1],
$$ 
where the operator $\tilde R^N(x)$ is defined by \eqref{defRN}. Moreover, the following estimate holds:
$$
\| R(x) - \tilde R^N(x) \|_{m \to m} \le \eps_N, \quad \eps_N = o\left(N^{-1}\right), \quad N \to \infty.
$$
\end{cor}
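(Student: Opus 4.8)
The plan is to exploit the sparse block structure of the truncated operator $\tilde R^N(x)$ rather than to analyze the finite system \eqref{mainN} directly. The key observation is that, by the definition \eqref{defRN}, every entry $\tilde R^N_{(n_0,k_0,\eps_0),(n,k,\eps)}(x)$ with $n > N$ vanishes; equivalently, all columns of $\tilde R^N(x)$ indexed by $v = (n,k,\eps) \notin V^N$ are zero. Consequently, if I write the full equation $(I - \tilde R^N(x))\psi^N(x) = \tilde\psi(x)$ component-wise, then for every $v_0 \in V$ the infinite sum $\sum_{v \in V}\tilde R^N_{v_0,v}(x)\psi^N_v(x)$ collapses to the finite sum $\sum_{v \in V^N}\tilde R_{v_0,v}(x)\psi^N_v(x)$, where I also use that $\tilde R^N_{v_0,v} = \tilde R_{v_0,v}$ for $v \in V^N$.

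First I would record that, by Lemma~\ref{lem:solveN}, for all sufficiently large $N$ the operator $(I - \tilde R^N(x))$ has a bounded inverse, so the full equation possesses a unique solution $\psi^N(x) \in m$ for each fixed $x \in [0,1]$. Then I would restrict the index $v_0$ in the collapsed relations to $v_0 \in V^N$: the resulting equations are precisely the finite system \eqref{mainN} satisfied by the subfamily $\{\psi^N_v(x)\}_{v \in V^N}$. Since Lemma~\ref{lem:solveN} also guarantees that \eqref{mainN} has a unique solution, this subfamily must coincide with it, which proves the first assertion of the corollary. For completeness, the remaining entries $\psi^N_{v_0}(x)$ with $n_0 > N$ are then recovered explicitly from the same collapsed relations in terms of $\tilde\psi$ and the finite-system solution, confirming consistency.

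Finally, the ``Moreover'' estimate is not new: it was already obtained inside the proof of Lemma~\ref{lem:solveN}, where the bound $\|\tilde R(x) - \tilde R^N(x)\|_{m \to m} \le \frac{C}{N}\sqrt{\sum_{n = N+1}^{\infty} (n\xi_n)^2}$ was derived from \eqref{estpsiR} and \eqref{defRN}. I would simply observe that the right-hand side is independent of $x$ and, because $\{n\xi_n\} \in l_2$, is $o(N^{-1})$ as $N \to \infty$; defining $\eps_N$ to be this quantity yields the claimed uniform estimate (the symbol $R$ in the statement is read as $\tilde R$).

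I do not anticipate a serious obstacle: the corollary is essentially a bookkeeping consequence of the block structure of $\tilde R^N(x)$ together with the invertibility already established in Lemma~\ref{lem:solveN}. The only point requiring a little care is to argue the equivalence in the correct direction, namely that the restriction of the \emph{unique} infinite-dimensional solution solves the finite system, and then to invoke uniqueness of \eqref{mainN} to identify the two, rather than conflating the finite and infinite solutions a priori.
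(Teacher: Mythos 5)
Your proposal is correct and follows essentially the same route as the paper, which gives no separate proof and simply notes that the corollary is read off from the proof of Lemma~\ref{lem:solveN}: the zero columns of $\tilde R^N(x)$ for $n>N$ collapse the infinite equation to the finite system on $V^N$, invertibility identifies the two solutions, and the norm estimate is the one already derived in that proof. Your reading of $R(x)$ as $\tilde R(x)$ in the ``Moreover'' estimate is the intended one.
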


Using the entries of the solution $\psi^N(x)$, define the functions
\begin{align} \label{defvvpsiN}
& \begin{bmatrix}
    \vv^N_{n,k,0}(x) \\ \vv^N_{n,k,1}(x)
\end{bmatrix}  := w_{n,k}(x)
\begin{bmatrix}
    \xi_n & 1 \\ 0 & 1
\end{bmatrix}
\begin{bmatrix}
    \psi^N_{n,k,0}(x) \\ \psi^N_{n,k,1}(x),
\end{bmatrix}
\quad n \le N, \\ \label{rec1N}
  &  \tau_1^N := \tilde \tau_1 - \frac{3}{2} \sum_{V^N} (-1)^{\eps}((\vv^N_{n,k,\eps})' \tilde \eta_{n,k,\eps} + \vv^N_{n,k,\eps} \tilde \eta'_{n,k,\eps}), \quad \hat \tau_1^N := \tau_1^N - \tilde \tau_1, \\ \label{rec0N}
  &  \tau_0^N := \tilde \tau_0 - (\hat \tau_1^N)' - 3 \frac{d}{dx}  \sum_{V^N}(-1)^{\eps} (\vv_{n,k,\eps}^N)' \tilde \eta_{n,k,\eps} - 2 \hat \tau_1^N \sum_{V^N} (-1)^{\eps}\vv_{n,k,\eps}^N \tilde \eta_{n,k,\eps},
\end{align}
analogously to the formulas \eqref{defvvpsi}, \eqref{rec1}, and \eqref{rec0}, respectively. Consider the following equation, the associated matrix, the corresponding quasi-derivatives, and the domain:
\begin{gather} \label{eqvN}
y''' + (\tau_1^N y)' + (\tau_1^N)' y + \tau_0^N y = \la y, \quad x \in (0,1), \\ \nonumber
F^N(x) := \begin{bmatrix}
                0 & 1 & 0 \\
                -(\sigma_0^N + \tau_1^N) & 0 & 1 \\
                0 & (\sigma_0^N - \tau_1^N) & 0
           \end{bmatrix},  \\ \label{quasiN}
y^{[0]} := y, \quad y^{[k]} := (y^{[k-1]})' - \sum_{j = 1}^k f_{k,j}^N y^{[j-1]}, \quad k = 1, 2, 3, \\ \nonumber
\mathcal D_{F^N} := \{ y \colon y^{[k]} \in AC[0,1], \, k = 0, 1, 2 \},
\end{gather}
where
\begin{equation} \label{sigmaN}
\sigma_0^N := \tilde \sigma_0 - \hat \tau_1^N - 3 \sum_{V^N} (-1)^{\eps} (\vv_{n,k,\eps}^N)' \tilde \eta_{n,k,\eps} - 2 \int_0^x \hat \tau_1^N \sum_{V^N} (-1)^{\eps} \vv_{n,k,\eps}^N \tilde \eta_{n,k,\eps} \, dx.
\end{equation}

Furthermore, define the functions
\begin{equation} \label{defPhiN}
\Phi_{k_0}^N(x, \la) := \tilde \Phi_{k_0}(x, \la) + \sum_{(n,k,\eps) \in V^N} (-1)^{\eps} \vv_{n,k,\eps}^N(x) \tilde P_{(n,k,\eps), k_0}(x, \la), \quad k_0 = 1, 2, 3,
\end{equation}
where
\begin{equation} \label{defP}
\tilde P_{(n,k,\eps), k_0}(x, \la) = 
\begin{cases}
(\beta_{n,2,0} \tilde D_{2,k_0} - \ga_n \tilde D_{3,k_0})(x, \la_{n,2,0}, \la), \:\: \text{if} \:\: n \in K, \, k = 2, \, \eps = 0, \\
(-1)^k \beta_{n,k,\eps} \tilde D_{4-k, k_0}(x, \la_{n,k,\eps}, \la), \quad \text{otherwise}.
\end{cases}
\end{equation}

Clearly, 
\begin{equation} \label{relPhiN}
\Phi_{k+1, \langle 0 \rangle}^N(x, \la_{n,k,\eps}) = \vv_{n,k,\eps}^N(x), \quad (n,k,\eps) \in V^N.
\end{equation}

In the next two lemmas, we prove that $\Phi_k^N(x, \la)$, $k = 1, 2, 3$, are the Weyl solutions of equation \eqref{eqvN}.

\begin{lem} \label{lem:PhiNeq}
For sufficiently large $N$, $k = 1, 2, 3$, and $\la \not\in \{ \la_{n,j} \} \cup \{ \tilde \la_{n,j} \}$, we have $\Phi^N_k(., \la) \in \mathcal D_{F^N}$ and $(\Phi_k^N)^{[3]} = \la \Phi_k^N$, where the quasi-derivative is defined by \eqref{quasiN}.    
\end{lem}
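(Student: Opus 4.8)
The plan is to exploit the fact that for the truncated data the index set $V^N$ is finite, so the sum defining $\Phi_{k_0}^N(x,\la)$ in \eqref{defPhiN} has only finitely many terms and may be differentiated term by term with no convergence considerations. This is precisely the feature that distinguishes the truncated construction from the original one and makes a direct substitution into equation \eqref{eqvN} legitimate. First I would record the regularity of the ingredients: by Lemma~\ref{lem:solveN} the finite system \eqref{mainN} is uniquely solvable for large $N$, its solution $\{\psi_v^N\}$ is obtained from a finite invertible matrix with smooth (model-problem) entries, and hence the functions $\vv_{n,k,\eps}^N$ from \eqref{defvvpsiN}, together with the coefficients $\tau_1^N$, $\sigma_0^N$ defined by \eqref{rec1N}, \eqref{sigmaN}, are smooth enough for the quasi-derivatives \eqref{quasiN} to make classical sense. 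Consequently $\Phi_{k_0}^N(\cdot,\la)\in\mathcal D_{F^N}$ will follow once the quasi-derivatives have been computed, and term-by-term differentiation is justified throughout.

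The engine of the computation is the $x$-derivative of the kernels. From the definition \eqref{defD} of $\tilde D_{k,j}$ and the Lagrange-bracket relation \eqref{wron} one obtains $\frac{\partial}{\partial x}\tilde D_{i,k_0}(x,\mu,\la)=\tilde\Phi_i^{\star}(x,\mu)\tilde\Phi_{k_0}(x,\la)$, whence, comparing \eqref{defP} with \eqref{defeta},
\[
\frac{\partial}{\partial x}\,\tilde P_{(n,k,\eps),k_0}(x,\la)=\tilde\eta_{n,k,\eps}(x)\,\tilde\Phi_{k_0}(x,\la),\qquad (n,k,\eps)\in V^N .
\]
This collapses all the $x$-derivatives of the kernels into the single factor $\tilde\Phi_{k_0}(x,\la)$ multiplied by the finite sums $A:=\sum_{V^N}(-1)^{\eps}\vv_{n,k,\eps}^N\tilde\eta_{n,k,\eps}$ and $B:=\sum_{V^N}(-1)^{\eps}(\vv_{n,k,\eps}^N)'\tilde\eta_{n,k,\eps}$. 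Using that $\tilde\Phi_{k_0}$ satisfies the model equation $\tilde\ell(\tilde\Phi_{k_0})=\la\tilde\Phi_{k_0}$, I would then compute successively the three quasi-derivatives of $\Phi_{k_0}^N$ with respect to $F^N$, noting that the coefficients of $F^N$ differ from those of $\tilde F$ only through $\hat\tau_1^N=\tau_1^N-\tilde\tau_1$ and $\sigma_0^N-\tilde\sigma_0$.

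The key point is that the reconstruction formulas encode exactly the identities needed for the extra terms to cancel. Indeed \eqref{rec1N} gives $A'=-\tfrac{2}{3}\hat\tau_1^N$, while \eqref{sigmaN} gives $\sigma_0^N-\tilde\sigma_0=-\hat\tau_1^N-3B-2\int_0^x\hat\tau_1^N A\,dt$. Substituting these relations into the expressions for $(\Phi_{k_0}^N)^{[1]}$, $(\Phi_{k_0}^N)^{[2]}$ and $(\Phi_{k_0}^N)^{[3]}$, the contributions produced by passing from the $\tilde F$-quasi-derivatives to the $F^N$-quasi-derivatives cancel stage by stage, and the third quasi-derivative reduces to $\la\Phi_{k_0}^N$. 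The same matching shows that $(\Phi_{k_0}^N)^{[0]},(\Phi_{k_0}^N)^{[1]},(\Phi_{k_0}^N)^{[2]}$ are absolutely continuous, so that $\Phi_{k_0}^N(\cdot,\la)\in\mathcal D_{F^N}$; this is why the verification of the equation and of membership in the domain are carried out simultaneously.

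I expect the main obstacle to be the faithful bookkeeping of these cancellations through three successive quasi-differentiations, in particular the interplay between the coefficient $\sigma_0^N+\tau_1^N$ entering $(\Phi_{k_0}^N)^{[2]}$ and the differentiation performed in $(\Phi_{k_0}^N)^{[3]}$, where the relations $A'=-\tfrac{2}{3}\hat\tau_1^N$ and the formula for $\sigma_0^N$ must be invoked in exactly the right order. Conceptually this computation mirrors the formal derivation of the reconstruction formulas \eqref{rec1N}, \eqref{rec0N}, \eqref{sigmaN} in \cite{Bond22-alg}; the essential novelty of the truncated setting is that, thanks to the finiteness of $V^N$ and the resulting regularity of $\vv_{n,k,\eps}^N$, every manipulation is genuinely rigorous rather than formal, which is precisely why the statement is restricted to the data $\mathfrak S^N$ and to sufficiently large $N$.
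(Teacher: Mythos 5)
Your overall computational strategy (term-by-term differentiation of the finite sum, the kernel identity $\partial_x\tilde P_{(n,k,\eps),k_0}=\tilde\eta_{n,k,\eps}\tilde\Phi_{k_0}$, and the use of \eqref{rec1N} and \eqref{sigmaN} to absorb the coefficient differences) matches the paper's computation up to and including the formula for $(\Phi_{k_0}^N)^{[3]}$. But there is a genuine gap at the final step: the cancellations do \emph{not} ``reduce the third quasi-derivative to $\la\Phi_{k_0}^N$'' by themselves. What the computation actually yields is the identity
\begin{equation*}
(\Phi_{k_0}^N)^{[3]}-\la\Phi_{k_0}^N=\tilde\Phi_{k_0}^{[3]}-\la\tilde\Phi_{k_0}
+\sum_{V^N}(-1)^{\eps}\bigl((\vv^N_{n,k,\eps})^{[3]}-\la_{n,k,\eps}\vv^N_{n,k,\eps}\bigr)\tilde P_{(n,k,\eps),k_0},
\end{equation*}
i.e.\ the defect of $\Phi_{k_0}^N$ is expressed through the defects of the functions $\vv^N_{n,k,\eps}$, which are \emph{not} known a priori to satisfy the equation --- they are defined as entries of the solution of the truncated main equation, not as values of solutions of \eqref{eqvN}. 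The paper closes this loop by specializing $\la=\la_{n_0,k_0,\eps_0}$ and using $\Phi^N_{k_0+1}(x,\la_{n_0,k_0,\eps_0})=\vv^N_{n_0,k_0,\eps_0}(x)$ to obtain a \emph{homogeneous} finite linear system for the defects $(\vv^N_{n,k,\eps})^{[3]}-\la_{n,k,\eps}\vv^N_{n,k,\eps}$ with the same (invertible, by Lemma~\ref{lem:solveN}) matrix as the truncated main equation; only the invertibility forces these defects to vanish, after which the displayed identity gives $(\Phi_{k_0}^N)^{[3]}=\la\Phi_{k_0}^N$. Without this argument the proof is incomplete: the reconstruction formulas alone do not encode the fact that the $\vv^N_{n,k,\eps}$ solve the new equation.

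A second, related overstatement is the regularity claim. The model problem is only required to lie in $W$, so its coefficients may be genuine distributions and $\tilde\Phi_{k_0}$, $\vv^N_{n,k,\eps}$ are merely of class $W_1^2[0,1]$; hence $(\Phi_{k_0}^N)^{[2]}=(\Phi_{k_0}^N)''+(\sigma_0^N+\tau_1^N)\Phi_{k_0}^N$ is a priori only in $L_1(0,1)$, and membership in $AC[0,1]$ (needed for $\Phi_{k_0}^N\in\mathcal D_{F^N}$ and for the third quasi-differentiation) is not automatic from ``smoothness of the finite system.'' The paper again resorts to a linear-system argument: it first shows $(\vv^N_{n,k,\eps})^{[2]}\in AC[0,1]$ by exhibiting these functions as the solution of a finite system with invertible $W_1^3$-matrix and absolutely continuous right-hand side, and only then deduces $(\Phi_{k_0}^N)^{[2]}\in AC[0,1]$ from the expression \eqref{PhiN2q}. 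You should incorporate both linear-system arguments; they are the actual content of the lemma beyond the formal computation you describe.
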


\begin{proof}
Recall that $\tilde \Phi_k(., \la) \in \mathcal D_{\tilde F}$, so $\tilde \Phi_k(., \la) \in W_1^2[0,1]$, $k = 1, 2, 3$. Consequently, $\tilde \psi_v \in W_1^2[0,1]$ and $\tilde R_{v_0,v} \in W_1^3[0,1]$ for $v_0, v \in V^N$. Hence, the entries $\tilde \psi_v^N$ of the finite system \eqref{mainN} solution belong to $W_1^2[0,1]$. Therefore, $\vv_{n,k,\eps}^N, \tilde \eta_{n,k,\eps} \in W_1^2[0,1]$ and $\tilde P_{(n,k,\eps),k_0} \in W_1^3[0,1]$ for $(n,k,\eps) \in V^N$, $k_0 = 1, 2, 3$. Using \eqref{rec1N}, \eqref{rec0N}, and \eqref{defPhiN}, we conclude that $\hat \tau_1^N \in W_1^1[0,1]$, $\hat \tau_0^N := \tau_0^N - \tilde \tau_0 \in L_1(0,1)$, and $\Phi_k^N(., \la) \in W_1^2[0,1]$. 

It follows from \eqref{wron}, \eqref{defD}, \eqref{defeta}, and \eqref{defP} that 
$$
\tilde P_{(n,k,\eps),k_0}'(x) = \tilde \eta_{n,k,\eps}(x) \tilde \Phi_{k_0}(x).
$$

Below in this proof, we omit the upper index $N$ and the arguments $(x)$ and $(x, \la)$ for brevity.
Thus, the differentiation of \eqref{defPhiN} implies
\begin{align} \nonumber
    \Phi_{k_0}' & =  \tilde \Phi_{k_0}' + \sum_{V^N} (-1)^{\eps} \left( \vv_{n,k,\eps}' \tilde P_{(n,k,\eps), k_0} + \vv_{n,k,\eps} \tilde \eta_{n,k,\eps} \tilde \Phi_{k_0}\right) \\ \label{PhiN2}
    \Phi_{k_0}'' & =  \tilde \Phi_{k_0}'' + \sum_{V^N} (-1)^{\eps} \Bigl( \vv''_{n,k,\eps} \tilde P_{(n,k,\eps), k_0} + 2 \vv_{n,k,\eps}' \tilde \eta_{n,k,\eps} \tilde \Phi_{k_0} + \vv_{n,k,\eps} (\tilde \eta_{n,k,\eps} \tilde \Phi_{k_0})' \Bigr)
\end{align}

Using the quasi-derivatives given by \eqref{quasiN} for $\Phi_{k_0}$ and $\vv_{n,k,\eps}$, the relations \eqref{sigmaN}, \eqref{defPhiN}, \eqref{PhiN2}, and $\tilde \Phi_{k_0}^{[2]} = \tilde \Phi_{k_0}'' - (\tilde \sigma_0 - \tilde \tau_1) \tilde \Phi_{k_0}$, we obtain
\begin{align} \nonumber
\Phi_{k_0}^{[2]} = & \, \tilde \Phi_{k_0}^{[2]} + \sum_{V^N}(-1)^{\eps} \Bigl( \vv_{n,k,\eps}^{[2]} \tilde P_{(n,k,\eps),k_0} - \vv_{n,k,\eps}' \tilde \eta_{n,k,\eps} \tilde \Phi_{k_0} + \vv_{n,k,\eps} (\tilde \eta_{n,k,\eps} \tilde \Phi_{k_0})'\Bigr) \\ \label{PhiN2q} & - 2 \int_0^x \hat \tau_1 \sum_{V^N} (-1)^{\eps} \vv_{n,k,\eps} \tilde \eta_{n,k,\eps} \, dx \, \tilde \Phi_{k_0}.
\end{align}

For simplicity, suppose that $K = \varnothing$. The opposite case requires minor technical changes. Then \eqref{relPhiN} implies $\Phi_{k_0 + 1}(x, \la_{n_0,k_0,\eps_0}) = \vv_{n_0,k_0,\eps_0}(x)$ and so we get
\begin{equation} \label{sys2}
\vv_{n_0,k_0,\eps_0}^{[2]} = \sum_{V^N} (-1)^{\eps} \vv_{n,k,\eps}^{[2]} \tilde G_{(n,k,\eps),(n_0,k_0,\eps_0)} + r_{n_0,k_0,\eps_0}, \quad (n_0,k_0,\eps_0) \in V^N,
\end{equation}
where $\tilde G_{(n,k,\eps), (n_0,k_0,\eps_0)}$ is defined by \eqref{defG} and
\begin{align*}
r_{n_0,k_0,\eps_0} := & \, \tilde \vv^{[2]}_{n_0,k_0,\eps_0} - \sum_{V^N}(-1)^{\eps} \Bigl( \vv_{n,k,\eps}' \tilde \eta_{n,k,\eps} \tilde \vv_{n_0,k_0,\eps_0} - \vv_{n,k,\eps} (\tilde \eta_{n,k,\eps} \tilde \vv_{n_0,k_0,\eps_0})'\Bigr) \\ & - 2 \int_0^x \hat \tau_1 \sum_{V^N} (-1)^{\eps} \vv_{n,k,\eps} \tilde \eta_{n,k,\eps} \, dx \, \tilde \vv_{n_0,k_0,\eps_0}.
\end{align*}

Since $\tilde \vv_{n_0,k_0,\eps_0} \in \mathcal D_F$, then $\tilde \vv^{[2]}_{n_0,k_0,\eps_0} \in AC[0,1]$, and so $r_{n_0,k_0,\eps_0}\in AC[0,1]$. Hence, the relations \eqref{sys2} can be treated as a finite linear system similar to \eqref{mainN} with the invertible matrix of class $W_1^3[0,1]$ and the right-hand side of class $AC[0,1]$. Therefore, $\vv_{n,k,\eps}^{[2]} \in AC[0,1]$. In view of \eqref{PhiN2q}, this implies $\Phi_{k_0}^{[2]} \in AC[0,1]$. Hence $\Phi_{k_0}^N \in \mathcal D_{F^N}$, $k_0 = 1, 2, 3$.

Thus, we can differentiate \eqref{PhiN2q} and find the third quasi-derivative. Using the relations \eqref{rec1N} and \eqref{rec0N} for simplification, we obtain
\begin{align} \nonumber
\Phi_{k_0}^{[3]} & = \, \tilde \Phi_{k_0}^{[3]} + \sum_{V^N} (-1)^{\eps} \Bigl( \vv_{n,k,\eps}^{[3]} \tilde P_{(n,k,\eps),k_0} \\ \label{PhiN3} & + \vv_{n,k,\eps} \bigl( \tilde \eta_{n,k,\eps}'' \tilde \Phi_{k_0} - \tilde \eta_{n,k,\eps}' \tilde \Phi'_{k_0} + \tilde \eta_{n,k,\eps} \tilde \Phi''_{k_0} + 2 \tilde \tau_1 \tilde \eta_{n,k,\eps} \tilde \Phi_{k_0}\bigr)\Bigr),
\end{align}
where the quasi-derivatives $\Phi_{k_0}^{[3]}$ and $\vv_{n,k,\eps}^{[3]}$ are given by the associated matrix $F^N(x)$ and $\tilde \Phi_{k_0}^{[3]}$ is given by $\tilde F(x)$. Note that
\begin{equation} \label{smeta}
\tilde \eta_{n,k,\eps}'' \tilde \Phi_{k_0} - \tilde \eta_{n,k,\eps}' \tilde \Phi'_{k_0} + \tilde \eta_{n,k,\eps} \tilde \Phi''_{k_0} + 2 \tilde \tau_1 \tilde \eta_{n,k,\eps} \tilde \Phi_{k_0} = \langle \tilde \eta_{n,k,\eps}, \tilde \Phi_{k_0} \rangle = (\la - \la_{n,k,\eps}) \tilde P_{(n,k,\eps), k_0}.
\end{equation}

The relations \eqref{defPhiN}, \eqref{PhiN3}, and \eqref{smeta} together imply
\begin{equation} \label{eqPhiN}
\Phi_{k_0}^{[3]} - \la \Phi_{k_0} = \tilde \Phi_{k_0}^{[3]} - \la \tilde \Phi_{k_0} + \sum_{V^N} (-1)^{\eps} (\vv^{[3]}_{n,k,\eps} - \la_{n,k,\eps}) \tilde P_{(n,k,\eps), k_0}.
\end{equation}

Taking the relation $\tilde \Phi^{[3]}_{k_0} = \la \tilde \Phi_{k_0}$ into account and putting $\la = \la_{n_0,k_0,\eps_0}$, we arrive at the linear algebraic system
$$
(\vv_{n_0,k_0,\eps_0}^{[3]} - \la_{n_0,k_0,\eps_0} \vv_{n_0,k_0,\eps_0}) = \sum_{V^N} (-1)^{\eps} (\vv_{n,k,\eps}^{[3]} - \la_{n,k,\eps} \vv_{n,k,\eps}) \tilde G_{(n,k,\eps),(n_0,k_0,\eps_0)}, \quad (n_0, k_0,\eps_0) \in V^N.
$$

Since this system is homogeneous and its matrix is non-singular, we conclude that $\vv_{n,k,\eps}^{[3]} = \la_{n,k,\eps} \vv_{n,k,\eps}$, $(n,k,\eps) \in V^N$. Consequently, it follows from \eqref{eqPhiN} that $\Phi_{k_0}^{[3]} = \la \Phi_{k_0}$.
\end{proof}

\begin{lem} \label{lem:PhiNbc}
For sufficiently large $N$, the functions $\Phi_k^N(x,\la)$, $k = 1, 2, 3$, fulfill the boundary conditions
\begin{equation} \label{bcPhiN}
(\Phi_k^N)^{(j-1)}(0,\la) = \de_{k,j}, \quad j = \overline{1,k}, \qquad
(\Phi_k^N)^{(3-j)}(1,\la) = 0, \quad j = \overline{k+1,3}.
\end{equation}
Thus, $\Phi_k^N(x,\la)$, $k = 1, 2, 3$, are the Weyl solutions of equation \eqref{eqvN}.
Moreover, $\mathfrak S^N$ are the spectral data of $\mathcal T^N$. \end{lem}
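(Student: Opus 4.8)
By Lemma~\ref{lem:PhiNeq}, each $\Phi_{k_0}^N(\cdot,\la)$ already solves equation \eqref{eqvN}, so the plan is to verify the boundary conditions \eqref{bcPhiN} and then to read off the spectral data. The conditions at $x=0$ and at $x=1$ require rather different arguments, and the latter are the real obstacle.

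\emph{Conditions at $x=0$.} The building block is the left-endpoint value of the Lagrange bracket: since $\frac{d}{dx}\langle\tilde\Phi_m^{\star},\tilde\Phi_j\rangle=(\mu-\la)\tilde\Phi_m^{\star}\tilde\Phi_j$ by \eqref{wron}, the quantity $\langle\tilde\Phi_m^{\star}(0,\la),\tilde\Phi_j(0,\mu)\rangle$ is constant and is fixed by the initial data of $\tilde\Phi_m^{\star}$ and $\tilde\Phi_j$; a short computation shows it vanishes unless $(m,j)\in\{(1,3),(2,2),(3,1)\}$, consistently with \eqref{D32}--\eqref{D22}. Using this together with $\tilde P_{(n,k,\eps),k_0}'=\tilde\eta_{n,k,\eps}\tilde\Phi_{k_0}$ (from the proof of Lemma~\ref{lem:PhiNeq}) and $\tilde\Phi_2(0)=\tilde\Phi_3(0)=\tilde\Phi_3'(0)=0$, I would evaluate \eqref{defPhiN} and its first two derivatives at $x=0$ by a downward cascade in $k_0$. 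Since $\tilde P_{(n,k,\eps),3}(0,\la)\equiv0$, the case $k_0=3$ gives $\Phi_3^N(0)=(\Phi_3^N)'(0)=0$ and $(\Phi_3^N)^{[2]}(0)=1$ at once; feeding this through \eqref{relPhiN} forces $\vv_{n,2,\eps}^N(0)=(\vv_{n,2,\eps}^N)'(0)=0$, which annihilates the only surviving (simple, pairwise distinct) pole terms in $\Phi_2^N(0,\la)$ and yields $\Phi_2^N(0)=0$, $(\Phi_2^N)'(0)=1$; one further turn gives $\vv_{n,1,\eps}^N(0)=0$ and $\Phi_1^N(0)=1$. This establishes the first line of \eqref{bcPhiN}.

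\emph{Conditions at $x=1$ (main obstacle).} Here the relevant brackets $\langle\tilde\Phi_{4-k}^{\star}(1,\cdot),\tilde\Phi_{k_0}(1,\cdot)\rangle$ do not vanish termwise, although the model boundary relations $\tilde\Phi_1(1)=\tilde\Phi_1'(1)=\tilde\Phi_2(1)=0$ and $\tilde\Phi_2^{\star}(1)=0$ simplify them; in particular, $\tilde\Phi_2^{\star}(1)=0$ removes the $k=2$ pole terms from $\Phi_1^N(1,\la)$. I would then organize the required identities $\Phi_1^N(1,\la)=(\Phi_1^N)'(1,\la)=\Phi_2^N(1,\la)=0$, via \eqref{relPhiN}, into a homogeneous linear system for the boundary quantities $\{\vv_{n,1,\eps}^N(1)\}$ and their derivatives, exactly as in the final step of Lemma~\ref{lem:PhiNeq}, where a homogeneous system forced $\vv_{n,k,\eps}^{[3]}=\la_{n,k,\eps}\vv_{n,k,\eps}$. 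This system is again governed by the matrix $I-\tilde R^N(1)$ from \eqref{mainN}, which is nonsingular for large $N$ by Lemma~\ref{lem:solveN}, so only the trivial solution survives. The delicate part, and what I expect to be the hardest bookkeeping, is the interaction between the factors $\frac{1}{\la-\la_{n,k,\eps}}$ and the poles of the model Weyl solutions $\tilde\Phi_{k_0}$, $\tilde\Phi_m^{\star}$ at $\tilde\la_{n,k}$; an alternative route is to run the $x=0$ cascade for the dual functions $\Phi_{k_0}^{N\star}$ and to relate the two families through the Lagrange identity \eqref{wron}.

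\emph{Spectral data.} Once \eqref{bcPhiN} holds, $\Phi_1^N,\Phi_2^N,\Phi_3^N$ are the Weyl solutions of \eqref{eqvN}; hence $\Phi_1^N,\Phi_2^N$ are meromorphic with poles among the zeros of $\Delta_{1,1}^N,\Delta_{2,2}^N$, and $\Phi_3^N$ is entire. To finish, I would compute the residues of $\Phi_{k_0}^N$ from \eqref{defPhiN} using the residues of $\tilde P_{(n,k,\eps),k_0}$ (constant in $x$ by \eqref{wron}) and the values of $\vv^N$ at the spectral points, show that the poles occur exactly at $\la_{n,k_0}^N$, and check that the residues reproduce, through Lemma~\ref{lem:WSD} and Corollary~\ref{cor:M*}, the definitions \eqref{defbeta}, \eqref{defga1}, \eqref{defga2} of $\be_{n,k}^N$ and $\ga_n$. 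This identifies $\mathfrak S^N$ with the spectral data of $\mathcal T^N$. The case $K\ne\varnothing$, set aside in Lemma~\ref{lem:PhiNeq}, is handled by the same scheme, carrying along the $\ga_n$-terms in \eqref{defP} and \eqref{defeta}.
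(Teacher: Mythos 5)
Your treatment of the conditions at $x=0$ and of the final identification of the spectral data is essentially what the paper does (the former by direct calculation, the latter by computing residues and matching them against Lemma~\ref{lem:WSD}). The gap is in the part you yourself flag as the main obstacle: the boundary conditions at $x=1$. Your plan is to cast $\Phi_1^N(1,\la)=(\Phi_1^N)'(1,\la)=\Phi_2^N(1,\la)=0$ as a homogeneous linear system for the boundary values $\vv^N_{n,k,\eps}(1)$ ``governed by the matrix $I-\tilde R^N(1)$'' and invoke Lemma~\ref{lem:solveN}. This does not work, for two reasons. First, the system $(I-\tilde R^N(1))\psi^N(1)=\tilde\psi(1)$ is the (inhomogeneous) main equation whose unique solution already \emph{defines} the values $\vv^N_{n,k,\eps}(1)$; it cannot be reused to force them to vanish, and indeed they do not all vanish --- e.g.\ $\vv^N_{n,2,1}(1)=\Phi_3^N(1,\tilde\la_{n,2})$ is generically nonzero, since $\tilde\la_{n,2}$ is not an eigenvalue of $\mathcal L_2^N$. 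One must sort out exactly which boundary quantities vanish, which is why the paper needs its three-step cascade. Second, the linear system that actually arises is of a different nature: evaluating \eqref{Phi2la} at $\la=\la_{n_0,1,1}$, using the model boundary relations ($\tilde\Phi_2(1,\la)\equiv 0$, $\tilde\Phi_2^{\star[1]}(1,\la_{n,2,1})=0$, $\tilde\Phi_3^{\star}(1,\la_{n,1,1})=0$) and the cancellation $\be_{n,1,1}\tilde D_{3,2}(1,\la_{n,1,1},\la_{n,1,1})=1$, the diagonal term $\Phi_2(1,\la_{n_0,1,1})$ drops out of both sides and one is left with a homogeneous system whose matrix is of Cauchy type, $\bigl[(\la_{n_0,1,1}-\la_{n,1,0})^{-1}\bigr]$, with no identity part and no relation to $I-\tilde R^N(1)$. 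Its nonsingularity is what the paper establishes by forming the auxiliary rational functions $\mathcal G_1(\la)$, $\mathcal G_2(\la)$ with $N$ prescribed zeros and $N$ prescribed poles, passing to $\mathcal G_1^{\diamond}=\mathcal G_1\prod_n\frac{\la-\la_{n,1,0}}{\la-\la_{n,1,1}}$, and applying Liouville's theorem; the vanishing of the individual boundary values is then read off from the residues of $\mathcal G_j\equiv 0$, using $\be_{n,1,0}\ne 0$ and $\ga_n\ne 0$. Without this (or an equivalent) argument the central claim of the lemma is unproved; the hedged ``alternative route'' via the dual family is likewise only a sketch and does not close the gap.
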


\begin{proof}
Let us focus on the proof of the boundary conditions \eqref{bcPhiN} at $x = 1$, which is more technically complicated. The boundary conditions \eqref{bcPhiN} at $x = 0$ can be checked by direct calculations. For brevity, we omit the upper index $N$. For simplicity, consider the case $\beta_{n,1} = \beta_{n,2} = 0$ for all $n \in K$. The other cases require technical modifications.

\textsc{Step 1}. Let us prove that 
\begin{equation} \label{step1}
\Phi_2^{(\nu)}(1,\la_{n,1,0}) = 0, \:\: n = \overline{1,N} \setminus K, \qquad
\Phi_3^{(\nu)}(1,\la_n) = 0, \:\: n \in K, \quad \nu = 0, 1.
\end{equation}

Using the relations \eqref{defPhiN}, \eqref{relPhiN}, and $\tilde \Phi_2(1,\la) = 0$, we get
\begin{align} \nonumber
\Phi_2(1,\la) = & -\sum_{(n,1,\eps) \in V^N} (-1)^{\eps} \be_{n,1,\eps} \tilde D_{3,2}(1,\la_{n,1,\eps},\la) \Phi_2(1, \la_{n,1,\eps}) \\ \label{Phi2la} & + \sum_{(n,2,\eps) \in V^N} (-1)^{\eps}\be_{n,2,\eps} \tilde D_{2,2}(1,\la_{n,2,\eps}, \la) \Phi_3(1, \la_{n,2,\eps}) - \sum_{n \in K} \ga_n \tilde D_{3,2}(1,\la_n, \la) \Phi_3(1,\la_n),
\end{align}
where $\la_n = \la_{n,1,0} = \la_{n,2,0}$ for $n \in K$. Using \eqref{defD}, \eqref{wron}, and the relations $\tilde \Phi_2(1,\la) = 0$, $\tilde \Phi^{\star}_2(1,\la) = 0$, we obtain
\begin{align} \label{smD22}
    \tilde D_{2,2}(1,\la_{n,2,\eps},\la) & = -\frac{\tilde \Phi^{\star [1]}(1,\la_{n,2,\eps}) \tilde \Phi_2'(1,\la)}{\la - \la_{n,2,\eps}}, \\ \nonumber
    \tilde D_{3,2}(1,\la_{n,1,\eps},\la) & = -\frac{\tilde \Phi_3^{\star[1]}(1,\la_{n,1,\eps}) \tilde \Phi_2'(1,\la) - \tilde \Phi_3(1,\la_{n,1,\eps}) \tilde \Phi_2^{[2]}(1,\la)}{\la - \la_{n,1,\eps}}
\end{align}

Note that $\tilde \Phi_2^{\star[1]}(1, \la_{n,2,1}) = 0$, $\tilde \Phi_3^{\star}(1,\la_{n,1,1}) = 0$. In addition, for $\la = \la_{n_0,1,1}$, we have $\tilde \Phi_2'(1,\la_{n_0,1,1}) = 0$, so
\begin{equation} \label{sm2}
\tilde D_{2,2}(1,\la_{n,2,\eps}, \la_{n_0,1,1}) = 0, \quad
\tilde D_{3,2}(1, \la_{n,1,1}, \la_{n_0,1,1}) = 0 \quad \text{if} \:\: n_0 \ne n.
\end{equation}
It can be shown that
$$
\be_{n,1,1} \tilde D_{3,2}(1,\la_{n,1,1}, \la_{n,1,1}) = 1.
$$

Hence, \eqref{Phi2la} implies
\begin{align*}
\Phi_2(1,\la_{n_0,1,1}) & = -\sum_{n = 1}^N \beta_{n,1,0} \frac{\tilde \Phi_3^{\star}(1,\la_{n,1,0}) \tilde \Phi_2^{[2]}(1,\la_{n_0,1,1})}{\la_{n_0,1,1} - \la_{n,1,0}} \Phi_2(1,\la_{n,1,0}) \\ & + \Phi_2(1,\la_{n_0,1,1}) - \sum_{n \in K} \ga_n \frac{\tilde \Phi_3^{\star}(1,\la_n) \tilde \Phi_2^{[2]}(1,\la_{n_0,1,1})}{\la_{n_0,1,1} - \la_n} \Phi_3(1,\la_n), \quad n_0 = \overline{1,N}.
\end{align*}

Simplifying this relation, we obtain
$$
\sum_{n = 1}^N \beta_{n,1,0} \frac{\tilde \Phi_3^{\star}(1,\la_{n,1,0}) \Phi_2(1,\la_{n,1,0})}{\la_{n_0,1,1} - \la_{n,1,1}} + \sum_{n \in K} \ga_n \frac{\tilde \Phi_3^{\star}(1,\la_n) \Phi_3(1,\la_n)}{\la_{n_0,1,1} - \la_n} = 0, \quad n_0 = \overline{1,N}.
$$

Thus, the following auxiliary function
\begin{equation} \label{defG1}
\mathcal G_1(\la) := \sum_{n = 1}^N \beta_{n,1,0} \frac{\tilde \Phi_3^{\star}(1,\la_{n,1,0}) \Phi_2(1,\la_{n_0,1,1})}{\la - \la_{n,1,1}} + \sum_{n \in K} \ga_n \frac{\tilde \Phi_3^{\star}(1,\la_n) \Phi_3(1,\la_n)}{\la - \la_n}
\end{equation}
has zeros $\{ \la_{n_0,1,1} \}_{n_0 = 1}^N$. At the same time, $\mathcal G_1(\la)$ has the poles $\{ \la_{n_0,1,0} \}_{n_0 = 1}^N$ (including $\la_n = \la_{n,1,0}$ for $n \in K$). Hence the following function is entire in $\la$:
$$
\mathcal G_1^{\diamond}(\la) := \mathcal G_1(\la) \prod_{n = 1}^N \frac{(\la - \la_{n,1,0})}{(\la - \la_{n,1,1})}.
$$

Obviously, $|\mathcal G_1(\la)| \to 0$ as $|\la| \to \infty$, and so $|\mathcal G_1^{\diamond}(\la)| \to 0$ as $|\la| \to \infty$. By Liouville's theorem, $\mathcal G_1^{\diamond}(\la) \equiv 0$, and so $\mathcal G_1(\la) \equiv 0$. Hence
\begin{align*}
\Res_{\la = \la_{n,1,1}} \mathcal G_1(\la) = \beta_{n,1,0} \tilde \Phi_3^{\star}(1,\la_{n,1,0}) \Phi_2(1,\la_{n,1,0}) = 0 \quad & \Rightarrow \quad \Phi_2(1,\la_{n,1,0}) = 0, \quad n = \overline{1,N} \setminus K, \\
\Res_{\la = \la_n} \mathcal G_1(\la) = \ga_n \tilde \Phi_3^{\star}(1,\la_n) \Phi_3(1,\la_n) = 0 \quad & \Rightarrow \quad \Phi_3(1,\la_n) = 0, \quad n \in K.
\end{align*}
Analogously, considering the derivative $\Phi_2'(1,\la_{n_0,1,1})$, we obtain the relations \eqref{step1} for $\nu = 1$.

\smallskip

\textsc{Step 2}. Let us prove that
\begin{equation} \label{step2}
\Phi_2(1,\la_{n,1,1}) = 0, \quad n \le N, \qquad
\Phi_3(1,\la_{n,2,0}) = 0, \quad n = \overline{1,N} \setminus K.
\end{equation}
Putting $\la = \la_{n_0,1,0}$ in \eqref{Phi2la}, we arrive at the relation
\begin{align*}
\Phi_2(1,\la_{n_0,1,0}) = & -\sum_{n = 1}^N \beta_{n,1,1} \frac{\tilde \Phi_3^{\star [1]}(1,\la_{n,1,1}) \tilde \Phi_2'(1,\la_{n_0,1,0})}{\la_{n_0,1,0}- \la_{n,1,1}} \Phi_2(1,\la_{n,1,1}) \\ & - \sum_{n = 1}^N \beta_{n,2,0} \frac{\tilde \Phi_2^{\star[1]}(1,\la_{n,2,0}) \tilde \Phi_2'(1,\la_{n_0,1,0})}{\la_{n_0,1,0} - \la_{n,2,0}} \Phi_3(1,\la_{n,2,0}) = 0.
\end{align*}

Hence, for $n_0 = \overline{1,N} \setminus K$, we have
\begin{equation} \label{sm1}
\sum_{n = 1}^N \beta_{n,1,1} \frac{\tilde \Phi_3^{\star[1]}(1,\la_{n,1,1}) \Phi_2(1,\la_{n,1,1})}{\la_{n_0,1,0} - \la_{n,1,1}} + \sum_{n = 1}^N \beta_{n,2,0} \frac{\tilde \Phi_2^{\star [1]}(1,\la_{n,2,0}) \Phi_3(1,\la_{n,2,0})}{\la_{n_0,1,0} - \la_{n,2,0}} = 0.
\end{equation}

Using \eqref{defPhiN}, \eqref{relPhiN}, and \eqref{step1}, we obtain
\begin{align} \nonumber
    \Phi_3(1,\la) = \tilde \Phi_3(1,\la) & + \sum_{n = 1}^N \beta_{n,1,1} \tilde D_{3,3}(1,\la_{n,1,1},\la) \Phi_2(1,\la_{n,1,1}) \\ \label{Phi3la} & + \sum_{(n,2,\eps) \in V^N} (-1)^{\eps} \beta_{n,2,\eps} \tilde D_{2,3}(1,\la_{n,2,\eps}, \la) \Phi_3(1,\la_{n,2,\eps}). 
\end{align}

Put $\la = \la_{n_0,2,1}$. Using the relations \eqref{wron}, \eqref{defD}, $\tilde \Phi_3(1,\la_{n_0,2,1}) = 0$, $\tilde \Phi_3^{\star}(1,\la_{n,1,1}) = 0$, $\tilde \Phi_2(1,\la) \equiv 0$, and $\tilde \Phi_2^{\star[1]}(1,\la_{n_0,2,1}) = 0$, we derive
\begin{align*}
    \tilde D_{3,3}(1,\la_{n,1,1}, \la_{n_0,1,0}) & = -\frac{\tilde \Phi_3^{\star}(1,\la_{n,1,1}) \tilde \Phi_3'(1,\la_{n_0,2,1})}{\la_{n_0,2,1} - \la_{n,1,1}}, \\
    \tilde D_{2,3}(1, \la_{n,2,0}, \la_{n_0,2,1}) & = -\frac{\tilde \Phi_2^{\star [1]}(1,\la_{n,2,0}) \tilde \Phi_3'(1,\la_{n_0,2,1})}{\la_{n_0,2,1} - \la_{n,2,0}}, \\
    \beta_{n,2,1} \tilde D_{2,3}(1, \la_{n,2,1}, \la_{n_0,2,1}) & = \begin{cases}
                            -1, \quad n = n_0, \\
                            0, \quad n \ne n_0.
                      \end{cases} 
\end{align*}

Substituting these relations into \eqref{Phi3la}, we get
\begin{align*}
    \Phi_3(1,\la_{n_0,2,1}) = & -\sum_{n = 1}^N \beta_{n,1,1} \frac{\tilde \Phi_3^{\star}(1,\la_{n,1,1}) \tilde \Phi_3'(1,\la_{n_0,2,1})}{\la_{n_0,2,1} - \la_{n,1,1}} \Phi_2(1,\la_{n,1,1}) \\ & - \sum_{n = 1}^N \beta_{n,2,0} \frac{\tilde \Phi_2^{\star [1]}(1,\la_{n,2,0}) \tilde \Phi_3'(1,\la_{n_0,2,1})}{\la_{n_0,2,1} - \la_{n,2,0}} \Phi_3(1, \la_{n,2,0}) + \Phi_3(1,\la_{n_0,2,1}).
\end{align*}

Consequently,
$$
\sum_{n = 1}^N \beta_{n,1,1} \frac{\tilde \Phi_3^{\star[1]}(1,\la_{n,1,1}) \Phi_2(1,\la_{n,1,1})}{\la_{n_0,2,1} - \la_{n,1,1}} + \sum_{n = 1}^N \beta_{n,2,0} \frac{\tilde \Phi_2^{\star [1]}(1,\la_{n,2,0}) \Phi_3(1,\la_{n,2,0})}{\la_{n_0,2,1} - \la_{n,2,0}} = 0, \quad n_0 = \overline{1,N}.
$$

Comparing the latter relation with \eqref{sm1}, we conclude that the following auxiliary function has zeros $\{ \la_{n_0,1,0} \}_{n_0 = \overline{1,N} \setminus K}$ and $\{ \la_{n_0,2,1}\}_{n_0 = 1}^N$:
$$
\mathcal G_2(\la) := \sum_{n = 1}^N \beta_{n,1,1} \frac{\tilde \Phi_3^{\star[1]}(1,\la_{n,1,1}) \Phi_2(1,\la_{n,1,1})}{\la - \la_{n,1,1}} + \sum_{n = 1}^N \beta_{n,2,0} \frac{\tilde \Phi_2^{\star [1]}(1,\la_{n,2,0}) \Phi_3(1,\la_{n,2,0})}{\la - \la_{n,2,0}}.
$$

At the same time, $\mathcal G_2(\la)$ has the poles $\{ \la_{n,1,1} \}_{n = 1}^N$ and $\{ \la_{n,2,0} \}_{n = \overline{1,N} \setminus K}$ (recall that $\beta_{n,2,0} = 0$ for $n \in K$). Analyzing the function $\mathcal G_2(\la)$ similarly to $\mathcal G_1(\la)$ defined by \eqref{defG1}, we show that $\mathcal G_2(\la) \equiv 0$. Hence
\begin{align*}
    \Res_{\la = \la_{n,1,1}} \mathcal G_2(\la) = \be_{n,1,1} \tilde \Phi_3^{\star[1]}(1,\la_{n,1,1}) \Phi_2(1, \la_{n,1,1}) = 0 \quad & \Rightarrow \quad \Phi_2(1,\la_{n,1,1}) = 0, \quad n \le N, \\
    \Res_{\la = \la_{n,2,0}} \mathcal G_2(\la) = \be_{n,2,0} \tilde \Phi_2^{\star[1]}(1,\la_{n,2,0}) \Phi_3(1,\la_{n,2,0}) = 0 \quad & \Rightarrow \quad \Phi_3(1,\la_{n,2,0}) = 0, \quad n = \overline{1,N} \setminus K.
\end{align*}

\textsc{Step 3}. Let us prove that 
$\Phi_2(1, \la) \equiv 0$, $\Phi_1(1,\la) \equiv 0$, and $\Phi_1'(1, \la) \equiv 0$.

Recall the assumption $\beta_{n,1,0} = \beta_{n,2,0} = 0$ for $n \in K$. Using \eqref{smD22}, we get $\tilde D_{2,2}(1, \la_{n,2,1}, \la) = 0$ for $\la \ne \la_{n,2,1}$. Consequently, using \eqref{Phi2la} and taking \eqref{step1}, \eqref{step2} into account, we obtain $\Phi_2(1,\la) \equiv 0$. Using \eqref{defPhiN}, \eqref{relPhiN}, the boundary conditions $\tilde \Phi_1^{(\nu)}(1,\la) = 0$, $\nu = 0, 1$, and the relations 
$$
\tilde D_{2,1}^{(\nu)}(1,\mu, \la) \equiv 0, \: \nu = 0, 1, \quad
\tilde D_{3,1}'(1, \mu, \la) \equiv 0,
$$
which can be easily checked, we get
\begin{align*}
    \Phi_1(1,\la) = & -\sum_{(n,1,\eps) \in V^N} (-1)^{\eps}\be_{n,1,\eps} \tilde D_{3,1}(1, \la_{n,1,\eps}, \la) \Phi_2(1, \la_{n,1,\eps}) - \sum_{n \in K} \ga_n \tilde D_{3,1}(1,\la_n,\la) \Phi_3(1,\la_n), \\
    \Phi_1'(1,\la) = & -\sum_{n = 1}^N \be_{n,1,0} \tilde D_{3,1}(1, \la_{n,1,0}, \la) \Phi_2'(1, \la_{n,1,0}) - \sum_{n \in K} \ga_n \tilde D_{3,1}(1, \la_n, \la) \Phi_3'(1, \la_n).
\end{align*}

Taking \eqref{step1} and \eqref{step2} into account, we conclude that $\Phi_1(1, \la) \equiv \Phi_1'(1,\la) \equiv 0$. 

\medskip

\textsc{Step 4}. Summarizing the results of steps~1--3, we conclude that $\Phi_k^N(x, \la)$, $k = 1, 2, 3$, fulfill the boundary conditions \eqref{bcPhiN}. Together with Lemma~\ref{lem:PhiNeq}, this implies that $\Phi_k^N(x, \la)$, $k = 1, 2, 3$,
are the Weyl solutions of equation~\eqref{eqvN}. It remains to show that $\mathfrak S^N$ are the corresponding spectral data. For this purpose, it is sufficient to prove the relations of Lemma~\ref{lem:WSD} for $\Phi_k^N(x, \la)$ and $\mathfrak S^N$. As before, we omit the upper index $N$.

Let us prove that $\Phi_2(1, \la_{n,1}^N) = 0$ for $n \in \mathbb N \setminus K$. For $n \le N$, this relation has been already proved at step~1. For $n > N$, we obtain $\Phi_2(1, \la_{n,1}^N) = \Phi_2(1, \la_{n,1,1}) = 0$ from \eqref{Phi2la} by using \eqref{step1}, \eqref{step2}, and \eqref{sm2}.

It follows from \eqref{defPhiN}, \eqref{relPhiN}, and the boundary conditions for $\Phi_2$ and $\Phi_3$ at $x = 0$ that
$$
\Phi_1'(0,\la)  = \tilde \Phi_1'(0, \la) - \sum_{(n,1,\eps) \in V^N} (-1)^{\eps} \be_{n,1,\eps} \tilde D_{3,1}(0, \la_{n,1,\eps}, \la).
$$
Using \eqref{defD} and \eqref{wron}, we show that 
$$
\tilde D_{3,1}(0, \la_{n,1,\eps}, \la) = \frac{1}{\la - \la_{n,1,\eps}}.
$$
Hence
\begin{gather*}
\Phi'_1(0, \la) = \tilde \Phi'(0,\la) - \sum_{n = 1}^N \left( \frac{\be_{n,1}}{\la - \la_{n,1}} - \frac{\tilde \be_{n,1}}{\la - \tilde \la_{n,1}}\right), \\
\Res_{\la = \la_{n,1}} \Phi'_1(0, \la) = -\be_{n,1}, \: n \le N, \qquad
\Res_{\la = \tilde \la_{n,1}} \Phi'_1(0, \la) = -\tilde \be_{n,1}, \: n > N.
\end{gather*}

The other relations of Lemma~\ref{lem:WSD} can be derived similarly, which concludes the proof.
\end{proof}

Lemma~\ref{lem:PhiNbc} implies the assertion of Lemma~\ref{lem:sdt} for the ``truncated'' spectral data $\mathfrak S^N$. For passing to the case of the general $\mathfrak S$, we use the approximation approach.

\begin{lem} \label{lem:limtau}
Let $\tau_1$, $\tau_0$, $\tau_1^N$, and $\tau_0^N$ be defined by the formulas \eqref{rec1}, \eqref{rec0}, \eqref{rec1N}, and \eqref{rec0N}, respectively. Then
$$
\lim_{N \to \infty} \| \tau_1^N - \tau_1 \|_{L_2(0,1)} = 0, \quad
\lim_{N \to \infty} \| \tau_0^N - \tau_0 \|_{W_2^{-1}(0,1)} = 0.
$$
\end{lem}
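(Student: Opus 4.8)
The plan is to reduce everything to the difference $\psi(x) - \psi^N(x)$ of the solutions of the full main equation \eqref{main} and the truncated one \eqref{mainN}, together with tails of the already-convergent reconstruction series. First I would establish $\|\psi(x) - \psi^N(x)\|_m \le C\eps_N$, uniformly in $x\in[0,1]$. This follows from the resolvent-type identity $\psi - \psi^N = (I - \tilde R^N)^{-1}(\tilde R - \tilde R^N)\psi$ together with the uniform invertibility of $(I - \tilde R^N(x))$ and the norm bound $\|\tilde R - \tilde R^N\|_{m\to m}\le\eps_N = o(N^{-1})$ from Corollary~\ref{cor:RN}. Via \eqref{defvvpsi}--\eqref{defvvpsiN} this gives at once the non-derivative estimates $|\vv_{n,k,\eps} - \vv^N_{n,k,\eps}|\le Cw_{n,k}\eps_N$ and, using $(\vv_{n,k,0}-\vv_{n,k,1})-(\vv^N_{n,k,0}-\vv^N_{n,k,1}) = w_{n,k}\xi_n(\psi_{n,k,0}-\psi^N_{n,k,0})$, the sharper paired estimate $|(\vv_{n,k,0}-\vv_{n,k,1})-(\vv^N_{n,k,0}-\vv^N_{n,k,1})|\le Cw_{n,k}\xi_n\eps_N$ for $n\le N$.

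For $\tau_1$ I would write $\vv^N = \tilde\vv + (\vv^N - \tilde\vv)$ in \eqref{rec1N} and split, as in the proof of Lemma~\ref{lem:series}, $\tau_1^N = \tilde\tau_1 - \tfrac32(\mathscr S_1^N + \mathscr S_2^N)$, where $\mathscr S_2^N := \sum_{V^N}(-1)^\eps(\tilde\vv'\tilde\eta + \tilde\vv\tilde\eta')$ and $\mathscr S_1^N$ is the analogue of $\mathscr S_1$ with $\vv$ replaced by $\vv^N$. Then $\tau_1 - \tau_1^N = -\tfrac32((\mathscr S_1 - \mathscr S_1^N) + (\mathscr S_2 - \mathscr S_2^N))$. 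Here $\mathscr S_2 - \mathscr S_2^N = \sum_{V\setminus V^N}(-1)^\eps(\tilde\vv'\tilde\eta + \tilde\vv\tilde\eta')$ is exactly the tail of the series $\mathscr S_2$, which converges in $L_2(0,1)$ by Lemma~8 of \cite{Bond22-alg}, hence it tends to $0$ in $L_2(0,1)$. In $\mathscr S_1 - \mathscr S_1^N$ I separate the tail $A_N := \sum_{V\setminus V^N}(-1)^\eps((\vv-\tilde\vv)'\tilde\eta + (\vv-\tilde\vv)\tilde\eta')$, which is a tail of the absolutely and uniformly convergent series $\mathscr S_1$ and so tends to $0$ uniformly, from the finite sum $B_N := \sum_{V^N}(-1)^\eps((\vv-\vv^N)'\tilde\eta + (\vv-\vv^N)\tilde\eta')$. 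Grouping $\eps = 0,1$ as in Lemma~\ref{lem:series} and using the estimates above with \eqref{esteta}, I obtain $|B_N| \le C\eps_N\sum_{n\le N}n\xi_n + C\sum_{n\le N}\xi_n\,\delta_n^N$, where $\delta_n^N := \max_{k,\eps}\sup_x|\psi'_{n,k,\eps}(x) - (\psi^N_{n,k,\eps})'(x)|$. The first sum is bounded by $o(N^{-1})\sqrt N\,\|\{n\xi_n\}\|_{l_2}\to 0$ by Cauchy--Schwarz, since $\{n\xi_n\}\in l_2$.

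The hardest point is the control of $\sum_{n\le N}\xi_n\delta_n^N$, i.e.\ the derivative differences $\delta_n^N$. For this I would differentiate the main equation in $x$ and analyze, entry by entry, the identity $(I - \tilde R^N)(\psi' - (\psi^N)') = (\tilde R' - (\tilde R^N)')\psi + (\tilde R^N)'(\psi - \psi^N) + (\tilde R - \tilde R^N)\psi'$, following the scheme of the proof of Lemma~1.6.7 in \cite{FY01} that already produced \eqref{proppsi} and \eqref{estvv1}. I expect this to be the main obstacle: $\tilde R'(x)$ and $(\tilde R^N)'(x)$ are not bounded operators on $m$, so the argument cannot be run in operator norm and must be carried out componentwise. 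This should yield the uniform bound $\delta_n^N \le C$ (which also follows from \eqref{proppsi} applied to both problems) together with $\delta_n^N \to 0$ as $N\to\infty$ for each fixed $n$. Dominating $\mathbb 1[n\le N]\,\xi_n\delta_n^N$ by $C\xi_n\in l_1$ and applying the dominated convergence theorem then gives $\sum_{n\le N}\xi_n\delta_n^N\to 0$, whence $B_N\to 0$ uniformly and $\|\tau_1^N - \tau_1\|_{L_2(0,1)}\to 0$.

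Finally, for $\tau_0$ I would pass to the antiderivative and prove $\|\sigma_0^N - \sigma_0\|_{L_2(0,1)}\to 0$, which is equivalent to the claimed $W_2^{-1}$-convergence. In \eqref{sigmaN} the term $\hat\tau_1^N\to\hat\tau_1$ in $L_2(0,1)$ by the step just completed; the series $\sum_{V^N}(-1)^\eps(\vv^N)'\tilde\eta$ has the same structure as the $\tau_1$ series and is treated identically, its $\mathscr S_2$-type part converging in $L_2(0,1)$ again by Lemma~8 of \cite{Bond22-alg}; and the last term converges because $\sum_V(-1)^\eps\vv\tilde\eta$ converges absolutely and uniformly while $\hat\tau_1^N\to\hat\tau_1$ in $L_2(0,1)$, so the integrated products converge in $L_2(0,1)$. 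Assembling these pieces yields $\|\tau_0^N - \tau_0\|_{W_2^{-1}(0,1)}\to 0$, completing the proof.
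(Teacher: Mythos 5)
Your proposal is correct and follows essentially the same route as the paper: the resolvent identity giving $\|\psi(x)-\psi^N(x)\|_m\le C\eps_N$ via Corollary~\ref{cor:RN}, the splitting of $\tau_1-\tau_1^N$ into a finite sum over $V^N$ (controlled by $\eps_N$) plus tails of the series already handled in Lemma~\ref{lem:series}, the $\eps$-pairing to extract the factor $\xi_n$, and the passage to the antiderivative $\sigma_0^N-\sigma_0$ for the $W_2^{-1}$ statement. The one place you diverge is the derivative term: the paper simply records the quantitative estimates \eqref{estvvN}, namely $|\vv^{(\nu)}_{n,k,\eps}-(\vv^N_{n,k,\eps})^{(\nu)}|\le Cn^{\nu}w_{n,k}(x)\eps_N$ (and the paired version with an extra $\xi_n$), obtained by repeating for the difference of the two main equations the same componentwise argument that produced \eqref{proppsi} and \eqref{estvv1}; this makes your $\delta_n^N$ satisfy $\delta_n^N\le Cn\eps_N$ and reduces the whole of $B_N$ to the single bound $C\eps_N\sum_{n\le N}n\xi_n\le CN\eps_N\to 0$. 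Your alternative --- uniform boundedness of $\delta_n^N$ from \eqref{proppsi}, pointwise convergence $\delta_n^N\to 0$ from the differentiated main equation, and dominated convergence against $C\xi_n\in l_1$ --- is also valid and buys you a proof that never needs the sharp rate $Cn\eps_N$ for the derivative differences, at the cost of a separate (though routine) pointwise-convergence argument; what you flag as the main obstacle is precisely the step the paper absorbs into the one-line assertion of \eqref{estvvN}.
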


\begin{proof}
Recall that $\psi(x)$ and $\psi^N(x)$ are the solutions of the main equations $(I - \tilde R(x)) \psi(x) = \tilde \psi(x)$ and $(I - \tilde R^N(x)) \psi^N(x) = \tilde \psi(x)$, respectively, and the operators $\tilde R(x)$ and $\tilde R^N(x)$ fulfill the estimate $\| \tilde R(x) - \tilde R^N(x) \|_{m \to m} \le \eps_N$, where $\eps_N = o\left( N^{-1}\right)$, $N \to \infty$ (see Corollary~\ref{cor:RN}). Hence, for sufficiently large $N$, we have $\| \psi(x) - \psi^N(x) \|_m \le C \eps_N$. The functions $\vv_{n,k,\eps}$ and $\vv^N_{n,k,\eps}$, $n \le N$, are obtained from $\psi(x)$ and $\tilde \psi^N(x)$, respectively, by the corresponding formulas \eqref{defvvpsi} and \eqref{defvvpsiN}. Consequently, for $(n,k,\eps) \in V^N$, $\nu = 0, 1$, one can obtain the estimates
\begin{equation} \label{estvvN}
\left.
\begin{array}{c}
    |\vv^{(\nu)}_{n,k,\eps}(x) - (\vv^N_{n,k,\eps})^{(\nu)}(x)| \le C n^{\nu} w_{n,k}(x) \eps_N, \\
    |\vv^{(\nu)}_{n,k,0}(x) - \vv^{(\nu)}_{n,k,1}(x) - (\vv^N_{n,k,0})^{(\nu)}(x) + (\vv^N_{n,k,1})^{(\nu)}(x)| \le C n^{\nu} w_{n,k}(x) \xi_n \eps_N,
\end{array} \quad \right\}
\end{equation}
where the constant $C$ does not depend on $N$, $x$, $n$, $k$, and $\eps$.

The series $(\tau_1 - \tau_1^N)$ can be represented as the sum $-\frac{3}{2}(\mathscr T_1^N + \mathscr T_2^N)$, where
\begin{align*}
& \mathscr T_1^N := \sum_{(n,k,\eps) \in V^N} (-1)^{\eps} \bigl((\vv_{n,k,\eps}' - (\vv_{n,k,\eps}^N)') \tilde \eta_{n,k,\eps} + (\vv_{n,k,\eps} - \vv_{n,k,\eps}^N) \tilde \eta'_{n,k,\eps}\bigr), \\
& \mathscr T_2^N := \sum_{(n,k,\eps) \in V \setminus V^N} (-1)^{\eps} (\vv_{n,k,\eps}' \tilde \eta_{n,k,\eps} + \vv_{n,k,\eps} \tilde \eta_{n,k,\eps}').
\end{align*}

Using the estimates \eqref{esteta} and \eqref{estvvN}, we obtain
$$
|\mathscr T_1^N| \le C \eps_N \sum_{n = 1}^N (n \xi_n) \le C N \eps_N.
$$

Hence, $\mathscr T_1^N \to 0$ as $N \to \infty$ uniformly with respect to $x \in [0,1]$. Furthermore, Lemma~\ref{lem:series} implies that $\lim\limits_{N \to \infty} \| \mathscr T_2^N \|_{L_2(0,1)} = 0$. This concludes the proof for $(\tau_1 - \tau_1^N)$. The proof for $(\tau_0 - \tau_0^N)$ is analogous.
\end{proof}

\begin{lem} \label{lem:limsd}
Let $\tau_1$ and $\tau_1^N$, $N \ge 1$, be arbitrary functions of $L_2(0,1)$ such that $\tau_1^N \to \tau_1$ in $L_2(0,1)$ as $N \to \infty$, and let $\tau_0$ and $\tau_0^N$, $N \ge 1$, be arbitrary functions of $W_2^{-1}(0,1)$ such that $\tau_0^N \to \tau_0$ in $W_2^{-1}(0,1)$ as $N \to \infty$. Then, the following assertions are valid for fixed indices $n \in \mathbb N$ and $k \in \{ 1, 2 \}$:

\begin{enumerate}
\item Let $\la_{n,k}$ be an eigenvalue of multiplicity $m_{n,k}$ of the problem $\mathcal L_k$ with the coefficients $\mathcal T = (\tau_0, \tau_1)$.
Let $d_{\de} = \{ \la \in \mathbb C \colon |\la - \la_{n,k}| \le \de \}$ be a disk of sufficiently small radius $\de > 0$ which contains no other eigenvalues of $\mathcal L_k$ except for $\la_{n,k}$.
Then, for every sufficiently large $N$, the analogous problem $\mathcal L_k^N$ with the coefficients $\mathcal T^N = (\tau_0^N, \tau_1^N)$ has exactly $m_{n,k}$ eigenvalues (counting with multiplicities) in the disk $d_{\de}$.
\item Let $\la_{n,k}$ be a simple eigenvalue of $\mathcal L_k$. Then the corresponding problems $\mathcal L_k^N$ have simple eigenvalues $\la_{n,k}^N$ such that $\la_{n,k}^N \to \la_{n,k}$ as $N \to \infty$. Furthermore, $\be_{n,k}^N \to \be_{n,k}$ as $N \to \infty$.
\item Suppose that the problems $\mathcal L_1$ and $\mathcal L_2$ have a common simple eigenvalue $\la_{n,1} = \la_{n,2} = \la_n$, the problems $\mathcal L_1^N$ and $\mathcal L_2^N$ have a common simple eigenvalue $\la_{n,1}^N = \la_{n,2}^N = \la_n^N$ for each sufficiently large $N$, $\la_n^N \to \la_n$ as $N \to \infty$ and $\beta_{n,1} = \beta_{n,1}^N = 0$ for all sufficiently large $N$ (or $\beta_{n,2} = \beta_{n,2}^N = 0$ for all sufficiently large $N$). Then $\ga_n^N \to \ga_n$ as $N \to \infty$.
\end{enumerate}
\end{lem}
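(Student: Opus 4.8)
The plan is to reduce all three assertions to the continuous dependence of the fundamental solutions $C_k^N(x,\la)$ on the coefficients. First I would note that $\tau_0^N \to \tau_0$ in $W_2^{-1}(0,1)$ allows us to choose antiderivatives $\sigma_0^N$ of $\tau_0^N$ with $\sigma_0^N \to \sigma_0$ in $L_2(0,1)$ (the integration constant does not affect the spectral data), so that the off-diagonal entries $\sigma_0^N \pm \tau_1^N$ of the associated matrix $F^N$ converge in $L_1(0,1)$ to the corresponding entries of $F$. Writing the initial value problem \eqref{initC} for the vector $(C_k^N, (C_k^N)', C_k^{N[2]})^{\top}$ as a Volterra integral equation and applying Gronwall's inequality, I would show that $C_k^N(x,\la)$, $(C_k^N)'(x,\la)$, and $C_k^{N[2]}(x,\la)$ converge to their $N$-free counterparts uniformly in $x \in [0,1]$ and uniformly for $\la$ in compact subsets of $\mathbb C$. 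Consequently, the entire functions $\Delta_{k,k}^N$, $\Delta_{k+1,k}^N$, $\Delta_{3,1}^N$, $\Delta_{2,2}^N = C_3^N(1,\cdot)$, and $C_1^N(1,\cdot)$ of \eqref{defDelta1}--\eqref{defDelta2} converge to $\Delta_{k,k}$, $\Delta_{k+1,k}$, $\Delta_{3,1}$, $\Delta_{2,2}$, $C_1(1,\cdot)$ locally uniformly in the $\la$-plane, and by the Weierstrass convergence theorem so do all their $\la$-derivatives, in particular $\dot\Delta_{k,k}^N \to \dot\Delta_{k,k}$.

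For assertion~1 I would fix $\de$ so small that $\Delta_{k,k}$ has no zeros on the circle $\partial d_\de$; then $\mu := \min_{|\la-\la_{n,k}| = \de}|\Delta_{k,k}(\la)| > 0$. By the local uniform convergence, $|\Delta_{k,k}^N(\la) - \Delta_{k,k}(\la)| < \mu \le |\Delta_{k,k}(\la)|$ on $\partial d_\de$ for all large $N$, and Rouch\'e's theorem yields that $\Delta_{k,k}^N$ has exactly $m_{n,k}$ zeros (with multiplicity) inside $d_\de$, i.e.\ $\mathcal L_k^N$ has exactly $m_{n,k}$ eigenvalues there. Assertion~2 is the case $m_{n,k}=1$: assertion~1 gives a unique zero $\la_{n,k}^N \in d_\de$ of $\Delta_{k,k}^N$, and letting $\de \to 0$ yields $\la_{n,k}^N \to \la_{n,k}$; its simplicity for large $N$ follows from $\dot\Delta_{k,k}^N(\la_{n,k}^N) \to \dot\Delta_{k,k}(\la_{n,k}) \ne 0$. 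Using the representation $\be_{n,k} = \Delta_{k+1,k}(\la_{n,k})/\dot\Delta_{k,k}(\la_{n,k})$ of \eqref{defbeta} together with $\Delta_{k+1,k}^N(\la_{n,k}^N) \to \Delta_{k+1,k}(\la_{n,k})$ and $\dot\Delta_{k,k}^N(\la_{n,k}^N) \to \dot\Delta_{k,k}(\la_{n,k})$, I obtain $\be_{n,k}^N \to \be_{n,k}$.

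For assertion~3 I would invoke the explicit formulas \eqref{defga1}--\eqref{defga2}. In the case $\be_{n,1} = \be_{n,1}^N = 0$ one has $\ga_n^N = \Delta_{3,1}^N(\la_n^N)/\dot\Delta_{1,1}^N(\la_n^N)$, and the convergences $\Delta_{3,1}^N(\la_n^N) \to \Delta_{3,1}(\la_n)$, $\dot\Delta_{1,1}^N(\la_n^N) \to \dot\Delta_{1,1}(\la_n) \ne 0$ give $\ga_n^N \to \ga_n$; the case $\be_{n,2} = \be_{n,2}^N = 0$ is identical with $C_1^N(1,\la_n^N)/\dot\Delta_{2,2}^N(\la_n^N)$ in place of the quotient, using $\dot\Delta_{2,2}(\la_n) \ne 0$.

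I expect the main step to be the first one: establishing the locally uniform (in $\la$) convergence of $C_k^N$ and its quasi-derivatives from the merely $L_1$-convergence of the associated matrices, since the quasi-derivative $C_k^{[2]}$ couples $\sigma_0$ with ordinary derivatives. This is in fact simplified by the observation that the $\la$-entry of the associated matrix is independent of $N$, so $F^N - F \to 0$ in $L_1(0,1)$ uniformly in $\la$; a single Gronwall estimate together with the bound $\|F^N(\cdot,\la)\|_{L_1} \le C(1+|\la|)$ then yields uniform convergence on $\la$-compacts. Once this continuous-dependence statement is in hand, assertions~1--3 are standard consequences of Rouch\'e's theorem and the Weierstrass convergence theorem.
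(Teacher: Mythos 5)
Your proposal is correct and follows essentially the same route as the paper: choose antiderivatives $\sigma_0^N \to \sigma_0$ in $L_2(0,1)$, establish continuous dependence of $C_k^{[j]}(1,\la)$ on $(\sigma_0,\tau_1)$ (the paper takes this as known from \cite{KZ96}, while you supply the Volterra--Gronwall details), then apply Rouch\'e's theorem for assertion~1 and the residue/quotient formulas \eqref{defbeta}, \eqref{defga1}--\eqref{defga2} for assertions~2 and~3. The only difference is the level of detail, not the method.
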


Lemma~\ref{lem:limsd} is proved by the well-known method for obtaining continuous dependence of the spectral data on the boundary value problem coefficients (see \cite{KZ96}). In recent years, this method has been actively developed for various classes of differential operators (see, e.g., \cite{Ao22-3ord, Ao22-4ord}). Therefore, here we outline the proof of Lemma~\ref{lem:limsd} briefly.

\begin{proof}[Proof of Lemma~\ref{lem:limsd}]
Recall that the eigenvalues $\{ \la_{n,k} \}$ coincide with the zeros of the characteristic functions $\Delta_{k,k}(\la)$, which are composed as some determinants of the functions $C_k^{[j]}(1,\la)$ (see formulas \eqref{defDelta1} and \eqref{defDelta2}).  It follows from the definition of the solutions $C_k(x, \la)$, $k = 1, 2, 3$, that the vectors of their quasi-derivatives solve the initial value problems \eqref{initC},
where $\sigma_0(x)$ is a fixed antiderivative of $\tau_0(x)$, so $\sigma_0$ and $\tau_1$ belong to $L_2(0,1)$. Obviously, the functions $C_k^{[j]}(1, \la)$, $k = 1, 2, 3$, $j = 0, 1, 2$, depend continuously on the coefficients $(\sigma_0, \tau_1)$ and analytically on $\la$. Consequently, so do the functions $\Delta_{j,k}(\la)$, $1 \le k < j \le 3$, defined by \eqref{defDelta1} and \eqref{defDelta2}. 

Suppose that $k \in \{ 1,2\}$ is fixed.
Let $\la_{n,k}$ be a zero of $\Delta_{k,k}(\la)$ of multiplicity $m_{n,k}$, and let the disk $d_\de$ satisfy the conditions of this lemma. Clearly, there exists a constant $c_0 > 0$ such that $|\Delta_{k,k}(\la)| \ge c_0$ on the boundary of $d_{\de}$. 
By the hypothesis of the lemma, $\tau_0^N \to \tau_0$ in $W_2^{-1}(0,1)$ as $N \to \infty$. Therefore, since $\sigma_0$ is a fixed antiderivative of $\tau_0$, then one can choose antiderivatives $\sigma_0^N$ of the functions $\tau_0^N$ so that $\sigma_0^N \to \sigma_0$ in $L_2(0,1)$ as $N \to \infty$. Consequently, it follows from the above arguments, that $\Delta_{k,k}^N(\la) \to \Delta_{k,k}(\la)$ as $N \to \infty$ uniformly with respect to $\la \in d_{\de}$. Hence
$$
|\Delta_{k,k}^N(\la) - \Delta_{k,k}(\la)| < c_0, \quad \la \in d_{\de},
$$
for all sufficiently large values of $N$. Rouche's Theorem implies that $\Delta_{k,k}^N(\la)$ has exactly $m_{n,k}$ zeros (counting with multiplicities) in $d_{\de}$. This proves the first assertion of the lemma.

Recall that the weight numbers $\be_{n,k}$ and $\ga_n$ are defined by \eqref{defbeta} and \eqref{defga1}-\eqref{defga2}, respectively, as the residues of some meromorphic functions composed of $C_k^{[j]}(1,\la)$. This implies the assertions 2 and 3 of the lemma.
\end{proof}

\begin{proof}[Proof of Lemma~\ref{lem:sdt}]
Consider the functions $\tau_1$ and $\tau_0$ which are constructed by the initially given data $\mathfrak S$ by the formulas \eqref{rec1} and \eqref{rec0}, respectively. Denote by $\{ \la_{n,k}^{\bullet} \}_{n \in \mathbb N}$ the eigenvalues of the corresponding problem $\mathcal L_k$, $k = 1, 2$, for equation \eqref{eqv}. We have to prove that the eigenvalues $\{ \la_{n,k}^{\bullet} \}$ coincide with the values $\{ \la_{n,k} \}$ of the data $\mathfrak S$. For this purpose, consider the data $\mathfrak S^N$ defined by \eqref{defSN} and the functions $\tau_1^N$ and $\tau_0^N$ constructed by the formulas \eqref{rec1N} and \eqref{rec0N}, respectively. By virtue of Lemma~\ref{lem:limtau}, we have $\tau_1^N \to \tau_1$ in $L_2(0,1)$ and $\tau_0^N \to \tau_0$ in $W_2^{-1}(0,1)$ as $N \to \infty$. Lemma~\ref{lem:PhiNbc} implies that $\mathfrak S^N$ are the spectral data of $\mathcal T^N = (\tau_0^N, \tau_1^N)$, in particular, $\{ \la_{n,k}^N \}_{n = 1}^{\infty}$ are the eigenvalues of the problem $\mathcal L_k^N$ for $k = 1, 2$. Due to the first assertion of Lemma~\ref{lem:limsd}, the eigenvalues of $\mathcal L_k^N$ converge to the eigenvalues of $\mathcal L_k$, that is, $\la_{n,k}^N \to \la_{n,k}^{\bullet}$ as $N \to \infty$, for each fixed $n \in \mathbb N$ and $k = 1, 2$. At the same time, it follows from \eqref{deflaN} that $\la_{n,k}^N \to \la_{n,k}$ as $N \to \infty$. Hence $\la_{n,k}^{\bullet} = \la_{n,k}$. Thus, the problems $\mathcal L_k$, $k = 1, 2$, have simple spectra, so the spectral data $\mathfrak S^{\bullet}$ of $\mathcal T$ have the structure similar to $\mathfrak S$:
$$
\mathfrak S^{\bullet} = \bigl( \{ \la_{n,k} \}_{n \in \mathbb N, \, k = 1, 2}, \{ \be_{n,k}^{\bullet} \}_{n \in \mathbb N, \, k = 1, 2}, \{ \ga_n^{\bullet} \}_{n \in K}\bigr).
$$

Successively applying the second and the third assertions of Lemma~\ref{lem:limsd} together with \eqref{defSN} and \eqref{deflaN}, we conclude that $\be_{n,k}^{\bullet} = \be_{n,k}$ for all $n \in \mathbb N$, $k = 1, 2$ and $\ga_n^{\bullet} = \ga_n$ for $n \in K$. This concludes the proof.
\end{proof}

Lemma~\ref{lem:sdt} finishes the proof of Theorem~\ref{thm:nsc}.

\begin{proof}[Proof of Theorem~\ref{thm:loc}]
Let $\tilde{\mathcal T}$ satisfy the hypothesis of Theorem~\ref{thm:loc}. Then, the conditions 1 and 2 of Theorem~\ref{thm:nsc} are valid for the spectral data $\tilde{\mathfrak S} = \{ \tilde \la_{n,k}, \tilde \be_{n,k} \}_{n \in \mathbb N, \, k = 1, 2}$ of $\tilde{\mathcal T}$ by the necessity. Therefore, the inequality \eqref{locsd} for sufficiently small $\eps > 0$ implies that the conditions 1 and 2 of Theorem~\ref{thm:nsc} hold for $\mathfrak S = \{ \la_{n,k}, \be_{n,k} \}_{n \in \mathbb N, \, k = 1, 2}$. Construct the main equation \eqref{main} by using the data $\mathfrak S$ and the model problem $\tilde{\mathcal T}$. Comparing \eqref{locsd} and \eqref{defxi}, we conclude that
\begin{equation} \label{xid}
\sqrt{\sum_{n = 1}^{\infty} (n\xi_n)^2} \le d(\mathfrak S, \tilde{\mathfrak S}) \le \eps.
\end{equation}

Using \eqref{estpsiR} and \eqref{xid}, we obtain the estimate
$$
\| \tilde R(x) \|_{m \to m} \le \sup_{n_0 \ge 1} \sum_{n = 1}^{\infty} \frac{C \xi_n}{|n - n_0| + 1} \le C \eps.
$$

Therefore, for sufficiently small $\eps > 0$, we have $\| \tilde R(x) \|_{m \to m} \le \frac{1}{2}$ for all $x \in [0,1]$. Hence, the operator $(I - \tilde R(x))$ has a bounded inverse. Thus, the data $\mathfrak S$ fulfill the conditions 1--3 of Theorem~\ref{thm:nsc}. This implies the existence of the coefficients $\mathcal T = (\tau_0, \tau_1) \in W$, for which $\mathfrak S$ are the spectral data ($K = \varnothing$).

The estimates \eqref{loctau} can be easily proved by using~\eqref{xid} and by following the proof of Lemma~\ref{lem:series}.
\end{proof}

\section{Self-adjoint case} \label{sec:sa}

The goal of this section is to prove Theorem~\ref{thm:suff} on the sufficient conditions of the inverse problem solvability. 
The central part in the proof is taken by Lemma~\ref{lem:solve} on the unique solvability of the main equation \eqref{main}.

We begin with some preliminaries.
Along with equation~\eqref{eqv} having the coefficients $\mathcal T = (\tau_0, \tau_1) \in W$, consider the analogous equation having the coefficients $\mathcal T^{\dagger} = (-\overline{\tau_0}, \overline{\tau_1})$: 
$$
\ell^{\dagger}(y) = y''' + (\overline{\tau_1(x)} y)' + \overline{\tau_1(x)} y' - \overline{\tau_0(x)} y = \la y, \quad x \in (0,1).
$$

The corresponding quasi-derivatives are induced by the associated matrix
$$
F^{\dagger}(x) = \begin{bmatrix}
            0 & 1 & 0 \\
            \overline{\sigma_0} - \overline{\tau_1} & 0 & 1 \\
            0 & -(\overline{\sigma_0} + \overline{\tau_1}) & 0
        \end{bmatrix}.
$$

Obviously, $F^{\dagger}(x) = \overline{F^{\star}(x)}$, where $F^{\star}(x)$ was defined in \eqref{defF*}. Therefore, there is the one-to-one correspondence $y(x, \la) = \overline{z(x, -\overline{\la})}$ between solutions $y(x, \la)$ and $z(x, \la)$ of the equations $\ell^{\dagger}(y) = \la y$ and $\ell^{\star}(z) = \la z$, respectively. Hence $M_{j,k}^{\dagger}(\la) = \overline{M_{j,k}^{\star}(-\overline{\la})}$, $1 \le k < j \le 3$. Using Corollary~\ref{cor:M*}, we obtain the following relations between the spectral data of $\mathcal T^{\dagger}$ and $\mathcal T$:
\begin{equation} \label{sd+}
\la_{n,1}^{\dagger} = -\overline{\la_{n,2}}, \quad \la_{n,2}^{\dagger} = -\overline{\la_{n,1}}, \quad \be_{n,1}^{\dagger} = -\overline{\be_{n,2}}, \quad \be_{n,2}^{\dagger} = -\overline{\be_{n,1}}, \quad \ga_n^{\dagger} = \overline{\ga_n}.
\end{equation}

Now suppose that $\mathrm i \tau_0(x)$ and $\tau_1(x)$ are real-valued functions. Then $\mathcal T^{\dagger} = \mathcal T$ and so \eqref{sd+} implies
$$
\la_{n,1} = -\overline{\la_{n,2}}, \quad \be_{n,1} = -\overline{\be_{n,2}}.
$$

As before, we assume that the eigenvalues $\{ \la_{n,k} \}_{n = 1}^{\infty}$ for each $k \in \{ 1, 2\}$ are simple. However, the eigenvalue order changes. Since we suppose that $\la_{n,1} = -\overline{\la_{n,2}}$, then $\la_{n,1} = \la_{p,2}$ not necessarily implies $n = p$. Therefore, we need some changes in the definition of $\ga_n$.

Put $\la_n := \la_{n,1}$, $\be_n := \be_{n,1}$, and define the index set
$$
K^+ := \{ n \in \mathbb N \colon \exists p = p(n) \: \textit{s.t.} \: \la_n = -\overline{\la_p} \}.
$$

If $\be_n = 0$, then define $\ga_n$ by formula \eqref{defga1}. If $n \in K^+$ and $\be_n \ne 0$, then $\la_{p(n),2} = \la_{n,1}$ and $\be_{p(n),2} = -\overline{\be_{p(n)}} = 0$, so we define $\ga_n$ by \eqref{defga2}. It follows from \eqref{sd+} that $\ga_n = \overline{\ga_{p(n)}}$ for $n \in K^+$. 
If $n = p(n)$, then $\la_n$ is purely imaginary, $\be_n = 0$, $\ga_n$ is real, and, moreover, it can be shown that $\ga_n > 0$. Our main goal is to prove Theorem~\ref{thm:suff}, so below we confine ourselves to the case $n = p(n)$ for all $n \in K^+$. Obviously, in this case, $K = K^+$. 

Proceed to the proof of Theorem~\ref{thm:suff}. Let $\mathfrak S^+ = (\{ \la_n \}_{n = 1}^{\infty}, \{ \be_n \}_{n \in \mathbb N \setminus K}, \{ \ga_n \}_{n \in K})$ be arbitrary numbers satisfying the hypothesis of Theorem~\ref{thm:suff}. Denote
\begin{equation} \label{defsd}
\left. \begin{array}{c}
\la_{n,1} := \la_n, \quad \la_{n,2} = -\overline{\la_n}, \quad n \in \mathbb N, \\
\be_{n,1} := \be_n, \quad \be_{n,2} := -\overline{\be_n}, \quad n \in \mathbb N \setminus K, \\
\be_{n,1} := 0, \quad \be_{n,2} := 0, \quad N \in K.
\end{array} \quad \right\}
\end{equation}

Consider the data $\mathfrak S = (\{ \la_{n,k} \}_{n \in \mathbb N, \, k = 1, 2}, \{ \be_{n,k} \}_{n \in \mathbb N, \, k = 1, 2}, \{ \ga_n \}_{n \in K})$. Choose a model problem $\tilde{\mathcal T} = (\tilde \tau_0, \tilde \tau_1)$ satisfying the conditions of Section~\ref{sec:equ} and the additional condition that $\mathrm i \tilde \tau_0(x)$ and $\tilde \tau_1(x)$ are real-valued. Following the steps of Section~\ref{sec:equ}, construct the operator $\tilde R(x) \colon m \to m$ and the elements $\tilde \psi(x) \in m$ and consider the main equation~\eqref{main}. For the proof of Theorem~\ref{thm:suff}, the following lemma on the unique solvability of the main equation is crucial.

\begin{lem} \label{lem:solve}
Under the conditions of Theorem~\ref{thm:suff}, the main equation \eqref{main} is uniquely solvable for each fixed $x \in [0,1]$.
\end{lem}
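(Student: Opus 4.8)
The plan is to exploit the Fredholm structure of the main equation together with the positivity built into the hypotheses of Theorem~\ref{thm:suff}. By Proposition~\ref{prop:maineq}, the operator $\tilde R(x)$ is compact in $m$ for each fixed $x$, so $I - \tilde R(x)$ is a Fredholm operator of index zero. Consequently, unique solvability of \eqref{main} is equivalent to injectivity, and it suffices to show that the homogeneous equation $(I - \tilde R(x))\gamma = 0$ admits only the trivial solution $\gamma = 0$ in $m$ for each fixed $x \in [0,1]$. I would fix $x$ and suppose, for contradiction, that $\gamma = \{\gamma_{n,k,\eps}\} \in m$ is a nonzero kernel element.

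First I would undo the normalization \eqref{defpsi}--\eqref{defR}, passing from $\gamma$ to the ``physical'' variables $\mathcal A_{n,k,\eps}(x)$ (the analogues of $\vv_{n,k,\eps}$, recovered through the transformation \eqref{defvvpsi}). These satisfy the homogeneous counterpart of the infinite system \eqref{infphi}, namely
$$
\mathcal A_{n_0,k_0,\eps_0}(x) = \sum_{(n,k,\eps)\in V}(-1)^{\eps}\,\mathcal A_{n,k,\eps}(x)\,\tilde G_{(n,k,\eps),(n_0,k_0,\eps_0)}(x),
$$
and the membership $\gamma \in m$ guarantees the decay of the $\mathcal A_{n,k,\eps}(x)$ needed below. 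Using these numbers as coefficients, I would assemble meromorphic functions of $\la$ of the same shape as the Weyl solutions---schematically $\mathcal A(x,\la) := \sum (-1)^{\eps}\mathcal A_{n,k,\eps}(x)\,\tilde P_{(n,k,\eps),k_0}(x,\la)$ in the spirit of \eqref{defPhiN}---together with their $\star$-duals built from $\tilde\Phi_k^{\star}$. The poles of these functions sit precisely at $\{\la_{n,1}\}$ and $\{\la_{n,2}\}$, and the homogeneous relations prescribe the vanishing of the residues linking the two families.

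The heart of the argument is a contour-integration/residue computation that converts these homogeneous relations into a definite quadratic form. I would integrate a suitable Lagrange-bracket product of $\mathcal A(x,\la)$ with its dual over a sequence of expanding contours. The Birkhoff-type estimates underlying Lemma~\ref{lem:estPhi} make the integral over the large contours vanish in the limit, so by the Residue Theorem the full sum of residues over $\{\la_{n,k}\}$ equals zero. The crucial point is then to evaluate these residues using the self-adjoint symmetry $\la_{n,1} = -\overline{\la_{n,2}}$, $\be_{n,1} = -\overline{\be_{n,2}}$, $\ga_n = \overline{\ga_n}$ recorded in \eqref{sd+} and \eqref{defsd}, by pairing the residue at $\la_{n,1}$ with the complex conjugate of the residue at $\la_{n,2} = -\overline{\la_{n,1}}$. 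Here the separation hypothesis $\mathrm{Re}\,\la_n \ge 0$ is decisive, since it keeps the two spectra on opposite sides of the imaginary axis so that the conjugate pairing renders each summand a nonnegative multiple of $|\mathcal A_{n,k,\eps}(x)|^2$; for $n \in K$ the eigenvalue $\la_n$ is purely imaginary and the term is governed by $\ga_n$. The nondegeneracy $\be_n \ne 0$ (for $n \notin K$) and the strict positivity $\ga_n > 0$ (for $n \in K$) guarantee these multiples are strictly positive, so the vanishing of the total sum forces $\mathcal A_{n,k,\eps}(x) = 0$ for all indices, whence $\gamma = 0$, the desired contradiction.

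The main obstacle, and the genuinely new ingredient relative to the even-order and second-order theory, is constructing the correct pair of meromorphic functions and identifying the exact sesquilinear form whose positivity is controlled by the hypotheses. Unlike the even-order case, there is no self-adjoint ``middle'' boundary value problem to integrate against, so the positivity must be manufactured from the interaction of $\mathcal L_1$ and $\mathcal L_2$ across the imaginary axis; verifying that the cross terms assemble into a manifestly sign-definite expression---and that the boundary and purely imaginary ($n \in K$) contributions fit into the same form---is the delicate step that requires both the separation condition $\mathrm{Re}\,\la_n \ge 0$ and the strict positivity $\ga_n > 0$.
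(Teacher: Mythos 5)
Your overall skeleton coincides with the paper's: Fredholm reduction to the homogeneous equation, passage to the unnormalized variables $z_{n,k,\eps}(x)$ satisfying the homogeneous analogue of \eqref{infphi}, assembly of interpolating meromorphic functions $Z_{k_0}(x,\la)$ as in \eqref{defZ}, and contour integration of products of the form $Z(x,\la)\,\overline{Z(x,-\overline{\la})}$ exploiting the symmetry \eqref{defsd} across the imaginary axis. However, the step you describe as decisive --- that ``the conjugate pairing renders each summand a nonnegative multiple of $|\mathcal A_{n,k,\eps}(x)|^2$'' --- is not correct and would not go through. The residues of the relevant meromorphic functions are \emph{cross terms} between the two independent families of unknowns: in the paper's notation, $\Res_{\la=\la_{n,1,0}}\mathcal B_2(x,\la)= \be_{n,1,0}\,z_{n,1,0}(x)\,\overline{z_{n,2,0}(x)}=:r_n(x)$ and $\Res_{\la=\la_{n,2,0}}\mathcal B_2(x,\la)=-\overline{r_n(x)}$. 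No conjugate pairing of these makes the individual terms sign-definite ($r_n-\overline{r_n}$ is purely imaginary, $r_n+\overline{r_n}$ has no fixed sign), so a single full-contour residue computation cannot produce the definite quadratic form you want.

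The actual mechanism requires two distinct contour integrals and places the positivity elsewhere. First, the full-circle integral of $\mathcal B_1(x,\la)=-Z_1(x,\la)\overline{Z_3(x,-\overline\la)}$ yields the identity $\sum_{n\notin K}r_n(x)+\sum_{n\in K}\ga_n|z_n(x)|^2=0$, whence $\sum_n r_n(x)\le 0$ by $\ga_n>0$. Second --- and this is where $\mbox{Re}\,\la_n\ge0$ enters --- integrating $\mathcal B_2(x,\la)=-Z_2(x,\la)\overline{Z_2(x,-\overline\la)}$ over the boundary of the right half-disk picks up only the poles $\{\la_{n,1,0}\}$ and equates $\sum_n r_n(x)$ to $\frac{1}{2\pi}\int_{\mathbb R}|Z_2(x,\mathrm i\tau)|^2\,d\tau\ge 0$: the nonnegativity lives in an $L^2$ norm of $Z_2$ restricted to the imaginary axis (where $-\overline\la=\la$), not in the individual residues. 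Combining the two inequalities forces $Z_2(x,\cdot)\equiv0$ on the imaginary axis, hence everywhere by analytic continuation, and only then do the individual $z_{n,k,\eps}$ vanish via residues and interpolation. You also omit a necessary final step: the components $z_{n,2,1}(x)$ are values of the \emph{entire} function $Z_3(x,\la)$ at the model eigenvalues and are not reached by any residue; killing them requires showing $Z_3\equiv0$ by dividing by the infinite product over its known zeros $\{\la_{n,2,0}\}$ and applying Liouville's theorem. These are not cosmetic details --- they are the content of the lemma, and as written your positivity argument would fail at the first residue computation.
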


\begin{proof}
Let $x \in [0,1]$ be fixed.
By virtue of Proposition~\ref{prop:maineq}, the operator $\tilde R(x)$ has the approximation property. Therefore, in view of Fredholm's Theorem, it is sufficient to prove that the homogeneous equation
\begin{equation} \label{homo}
(I - \tilde R(x)) \zeta(x) = 0,
\end{equation}
has the unique solution $\zeta(x) = 0$ in $m$.

Let $\zeta(x) = [\zeta_v(x)]_{v \in V} \in m$ be a solution of \eqref{homo}. Then
$$
\zeta_{v_0}(x) = \sum_{v \in V} \tilde R_{v_0,v}(x) \zeta_v(x), \quad |\zeta_{v_0}(x)| \le C, \quad v_0 \in V.
$$

Analogously to \eqref{defvvpsi}, define
$$
\begin{bmatrix}
    z_{n,k,0}(x) \\ z_{n,k,1}(x)
\end{bmatrix} := w_{n,k}(x)
\begin{bmatrix}
    \xi_n & 1 \\ 0 & 1
\end{bmatrix}
\begin{bmatrix}
    \zeta_{n,k,0}(x) \\ \zeta_{n,k,1}(x)
\end{bmatrix}.
$$

Thus, $\zeta_{n,k,\eps}(x)$ is the analog of $\psi_{n,k,\eps}(x)$ and $z_{n,k,\eps}(x)$ is the analog of $\vv_{n,k,\eps}(x)$ for the homogeneous equation \eqref{homo}.
Consequently, we have
\begin{gather} \label{sysz1}
z_{n_0,k_0,\eps_0}(x) = \sum_{(n,k,\eps) \in V} (-1)^{\eps} z_{n,k,\eps}(x) \tilde G_{(n,k,\eps), (n_0,k_0,\eps_0)}(x),
\quad (n_0,k_0,\eps_0) \in V, \\ \label{estz}
|z_{n,k,\eps}(x)| \le C w_{n,k}(x), \quad |z_{n,k,0}(x) - z_{n,k,1}(x)| \le C w_{n,k}(x) \xi_n, \quad (n,k,\eps) \in V.
\end{gather}

Using \eqref{defG} and taking into account that $\be_{n,k,0} = 0$ for $k \in K$, we can consider the following system instead of \eqref{sysz1}:
\begin{align} \nonumber
    z_{n_0, k_0, \eps_0}(x) = & \sum_{(n,k,\eps) \in V^-} (-1)^{\eps + k} \be_{n,k,\eps} \tilde D_{4 - k,k_0+1}(x, \la_{n,k,\eps}, \la_{n_0,k_0,\eps_0}) z_{n,k,\eps}(x) \\ \label{sysz2} & - \sum_{n \in K} \ga_n \tilde D_{3,k_0+1}(x, \la_n, \la_{n_0,k_0,\eps_0}) z_n(x), \qquad (n_0,k_0,\eps_0) \in V^-, \\ \nonumber
    z_{n_0}(x) = & \sum_{(n,k,\eps) \in V^-} (-1)^{\eps+k} \be_{n,k,\eps} \tilde D_{4-k,3}(x, \la_{n,k,\eps}, \la_{n_0}) z_{n,k,\eps}(x) \\ \label{syszn} & - \sum_{n \in K} \ga_n \tilde D_{3,3}(x, \la_n, \la_{n_0}) z_n(x), \qquad n_0 \in K,
\end{align}
where
$$
V^- := V \setminus \{ (n,k,0) \colon n \in K, \, k= 1, 2 \}, \quad z_n(x) := z_{n,2,0}(x), \quad n \in K.
$$

Obviously, if the solution $(\{ z_{n,k,\eps}(x) \}_{(n,k,\eps) \in V^-}, \{ z_n(x) \}_{n \in K})$ of the system \eqref{sysz2}--\eqref{syszn} is zero, then the solution $\{ z_{n,k,\eps}(x) \}_{(n,k,\eps) \in V}$ of \eqref{sysz1} is also zero, and so does $\zeta(x)$. Let us prove this. 

Consider the functions
\begin{equation} \label{defZ}
Z_{k_0}(x, \la) := \sum_{(n,k,\eps) \in V^-} (-1)^{\eps + k} \be_{n,k,\eps} \tilde D_{4 - k,k_0}(x, \la_{n,k,\eps}, \la) z_{n,k,\eps}(x) - \sum_{n \in K} \ga_n \tilde D_{3,k_0}(x, \la_n, \la) z_n(x)
\end{equation}
for $k_0 = 1, 2, 3$. In view of \eqref{sysz2} and \eqref{syszn}, we have
\begin{equation} \label{relZ}
Z_{k+1}(x, \la_{n,k,\eps}) = z_{n,k,\eps}(x), \quad (n,k,\eps) \in V^-, \qquad
Z_3(x, \la_n) = z_n(x), \quad n \in K.
\end{equation}

Furthermore, the function $Z_3(x, \la)$ is entire in $\la$, and the functions $Z_1(x, \la)$ and $Z_2(x, \la)$ are meromorphic with the simple poles $\{ \la_{n,1,0} \}_{n \in N}$ and $\{ \la_{n,2,0} \}_{n \in \mathbb N \setminus K}$, respectively. Calculations show that
\begin{align} \label{ResZ1}
\Res_{\la = \la_{n,k,0}} Z_k(x, \la) & = -\be_{n,k,0} z_{n,k,0}(x), \quad (n,k,0) \in V^-, \\ \label{ResZ2} \Res_{\la = \la_n} Z_1(x, \la) & = -\ga_n z_n(x), \quad n \in K.
\end{align}

Fix $\de > 0$.
Using Lemma~\ref{lem:estPhi}, \eqref{asymptla}, \eqref{defD}, \eqref{esteta}, \eqref{estz}, and \eqref{defZ}, we obtain the estimate
\begin{gather} \label{estZ}
|Z_{k_0}(x, \rho^3)| \le \sum_{n = 1}^{\infty} \sum_{k = 1, 2} \frac{C \xi_n |\rho|^{-(k_0-1)}| |\exp(\rho \om_{k_0} x)|}{|\rho - \rho_{n,k}^0| + 1}, \quad \rho \in \overline{\Gamma_{s,\de}}, \quad k_0 = 1, 2, 3, \\ \label{defcont}
\Gamma_{s,\de} := \{ \rho \in \Gamma_s \colon |\rho| \ge \de, \, |\rho - \rho_{n,k}^0| \ge \de, \, n \in \mathbb N, \, k = 1, 2 \},
\end{gather}
where $\{ \om_k \}_{k = 1}^3$ are the roots of the equation $\om^3 = 1$ numbered according to \eqref{order}, $\rho_{n,k}^0 = \sqrt[3]{\la_{n,k}^0} \in \overline{\Gamma_s}$, $\{ \la_{n,k}^0 \}$ are the eigenvalues of the problem $\mathcal L_k^0$ with the zero coefficients $\mathcal T = (0,0)$.

Consider the functions
\begin{equation} \label{defB}
\mathcal B_1(x, \la) := -Z_1(x, \la) \overline{Z_3(x, -\overline{\la})}, \quad
\mathcal B_2(x, \la) := -Z_2(x, \la) \overline{Z_2(x, -\overline{\la})}.
\end{equation}

In view of the analytic properties of $Z_k(x, \la)$, $k = 1, 2, 3$, the functions $\mathcal B_1(x, \la)$ and $\mathcal B_2(x, \la)$ are meromorphic in $\la$ with the simple poles $\{ \la_{n,1,0} \}_{n \in \mathbb N}$ and $\{ \la_{n,k,0} \}_{n \in \mathbb N \setminus K, \, k = 1, 2}$, respectively. Using \eqref{relZ}, \eqref{ResZ1}, and \eqref{ResZ2}, we find the residues:
\begin{align*}
    \Res_{\la = \la_{n,1,0}} \mathcal B_1(x, \la) & = \be_{n,1,0} z_{n,1,0}(x) \overline{z_{n,2,0}(x)} =: r_n(x), \quad n \in \mathbb N \setminus K, \\
    \Res_{\la = \la_n} \mathcal B_1(x, \la) & = \ga_n z_n(x) \overline{z_n(x)}, \quad n \in K, \\
    \Res_{\la = \la_{n,1,0}} \mathcal B_2(x, \la) & = -\overline{\be_{n,2,0} z_{n,2,0}(x)} z_{n,1,0}(x) = r_n(x), \quad n \in \mathbb N \setminus K, \\
    \Res_{\la = \la_{n,2,0}} \mathcal B_2(x, \la) & = \be_{n,2,0} z_{n,2,0}(x) \overline{z_{n,1,0}(x)} = -\overline{r_n(x)}, \quad n \in \mathbb N \setminus K.
\end{align*}

The estimate \eqref{estZ} together with \eqref{defB} imply
\begin{equation} \label{estB}
|\mathcal B_j(x, \rho^3)| \le \frac{C}{|\rho|^2} \left( \sum_{n = 1}^{\infty} \sum_{k = 1, 2} \frac{\xi_n}{|\rho - \rho_{n,k}^0| + 1}\right)^2 \le \frac{C}{|\rho|^4}, \quad \rho \in \overline{\Gamma_{s,\de}}, \quad j = 1, 2.
\end{equation}

Consider the contours $\mathcal C_R := \{ \la \in \mathbb C \colon |\la| = R \}$ of sufficiently large radii $R$ such that $\la = \rho^3 \in \mathcal C_R$ imply $\rho \in \overline{\Gamma_{s,\de}}$ for some $s \in \{ 1, 2\}$ and the fixed $\de > 0$. Then
$$
\lim_{R \to \infty}\frac{1}{2 \pi \mathrm{i}} \oint_{\mathcal C_R} \mathcal B_1(x, \la) \, d\la = 0.
$$

Applying the Residue Theorem, we get
\begin{equation} \label{sumga}
\sum_{n \in \mathbb N \setminus K} r_n(x) + \sum_{n \in K} \ga_n |z_n(x)|^2 = 0.
\end{equation}

Since $\ga_n > 0$, then 
\begin{equation} \label{ineqr}
\sum_{n \in \mathbb N \setminus K} r_n(x) \le 0.
\end{equation}

Denote by $\mathcal C_R^+$ the arc $\{ \la \in \mathcal C_R \colon \mbox{Re} \, \la \ge 0 \}$ and consider the contour $\mathcal C_R^0 = \mathcal C_R^+ \cup [-\mathrm{i} R, \mathrm{i} R]$ with the counter-clockwise circuit (see Figure~\ref{img:contour}). According to the estimate \eqref{estB}, we have
$$
\lim_{R \to \infty} \frac{1}{2 \pi \mathrm{i}} \int_{\mathcal C_R^+} \mathcal B_2(x, \la) \, d\la = 0.
$$

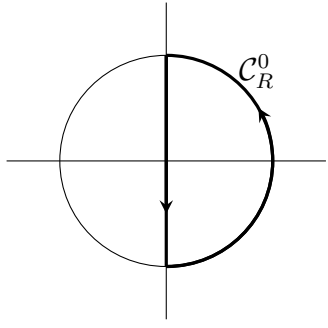
\begin{figure}[h!]
\centering
\begin{tikzpicture}[scale = 0.7]
\draw (-3,0) edge (3,0);
\draw (0,-3) edge (0,3);
\draw (0,0) circle (2);
\draw[very thick] (0, -2) arc(-90:90:2); 
\draw[very thick] (0, -2) edge (0, 2);
\draw[-stealth, very thick] (0, -2) arc(-90:30:2); 
\draw[-stealth, very thick] (0, 2) -- (0, -1);
\draw (1.7, 1.7) node{$\mathcal C_R^0$};
\end{tikzpicture}
\caption{Contour $\mathcal C_R^0$}
\label{img:contour}
\end{figure}

Hence
\begin{equation} \label{ineqG}
\lim_{R \to \infty} \frac{1}{2 \pi \mathrm{i}} \oint_{\mathcal C_R^0} \mathcal B_2(x, \la) \, d \la = -\frac{1}{2\pi \mathrm{i}} \int_{-\mathrm{i} \infty}^{\mathrm{i} \infty} \mathcal B_2(x, \la) \, d\la = \frac{1}{2\pi} \int_{-\infty}^{\infty} |\Gamma_2(x, \mathrm{i}\tau)|^2 \, d\tau \ge 0.
\end{equation}

The Residue Theorem implies
$$
\lim_{R \to \infty} \frac{1}{2 \pi \mathrm{i}} \oint_{\mathcal C_R^0} \mathcal B_2(x, \la) \, d \la = \sum_{n \in \mathbb N \setminus K} r_n(x).
$$

Taking the inequalities \eqref{ineqr} and \eqref{ineqG} into account, we conclude that $\Gamma_2(x, \la) \equiv 0$ for $\mathrm{i} \la \in \mathbb R$. By the analytic continuation principle, we have $\Gamma_2(x, \la) \equiv 0$, $\la \in \mathbb C$. The relations \eqref{ResZ1} and $\be_{n,2,0} \ne 0$ imply $z_{n,2,0}(x) = 0$ for $n \in \mathbb N \setminus K$. It follows from \eqref{relZ} that $z_{n,1,\eps}(x) = Z_2(x, \la_{n,1,\eps}) = 0$ for $(n,1,\eps) \in V^-$.
Then, the relation \eqref{sumga} implies
$$
\sum_{n \in K} \ga_n |z_n(x)|^2 = 0.
$$

Hence $z_n(x) = 0$ for $n \in K$. In view of \eqref{relZ}, the entire function $\Gamma_3(x, \la)$ has zeros $\{ \la_{n,2,0} \}_{n \in \mathbb N}$. Consider the infinite product
$$
P(\la) := \prod_{n = 1}^{\infty} \left( 1 - \frac{\la}{\la_{n,2,0}}\right).
$$

Using the asymptotics \eqref{asymptla}, one can show that
\begin{equation} \label{estP}
|P(\rho^3)| \ge C |\rho|^{-2} |\exp(\rho \om_3)|, \quad \rho \in \overline{\Gamma_{s,\de}}.
\end{equation}

Clearly, the function $\frac{Z_3(x, \la)}{P(\la)}$ is entire. The estimates \eqref{estZ} for $k_0 = 3$ and \eqref{estP} imply
$$
\left| \frac{Z_3(x, \rho^3)}{P(\rho^3)} \right| \le \frac{C}{|\rho|}, \quad \rho \in \overline{\Gamma_{s,\de}}.
$$

By Liouville's Theorem, we conclude that $Z_3(x, \la) \equiv 0$ and so $z_{n,2,1}(x) = Z_3(x, \la_{n,2,1}) = 0$, $n \in \mathbb N$.
Thus, $z_{n,k,\eps}(x) = 0$ for all $(n,k,\eps) \in V$. This yields the claim of the lemma.
\end{proof}

\begin{proof}[Proof of Theorem~\ref{thm:suff}]
It follows from the hypothesis of Theorem~\ref{thm:suff} and Lemma~\ref{lem:solve} that the data $\mathfrak S$ fulfill the conditions of Theorem~\ref{thm:nsc}. By virtue of Theorem~\ref{thm:nsc}, there exist coefficients $\mathcal T = (\tau_0, \tau_1) \in W$ with the spectral data $\mathfrak S$. It remains to show that $\mathcal T \in W^+$, that is, $\mathrm{i} \tau_0(x), \, \tau_1(x) \in \mathbb R$. For this purpose, consider the coefficients $\mathcal T^{\dagger} = (-\overline{\tau_0}, \overline{\tau_1})$ and the corresponding spectral data $\mathfrak S^{\dagger}$. Recall that the spectral data $\mathfrak S$ and $\mathfrak S^{\dagger}$ are related by \eqref{sd+}. Since \eqref{defsd} holds and $\ga_n \in \mathbb R$, $n \in K$, we obtain $\mathfrak S = \mathfrak S^{\dagger}$. By virtue of the uniqueness theorem (Theorem~\ref{thm:uniq}), this implies $\mathcal T = \mathcal T^{\dagger}$, which concludes the proof.
\end{proof}

\section{Conclusion} \label{sec:concl}

In this paper, the necessary and sufficient conditions of the inverse problem solvability are obtained for the third-order differential equation~\eqref{eqv}. We treat equation \eqref{eqv} by using the regularization approach of Mirzoev and Shkalikov \cite{MS16, MS19}. The proof of the main result (Theorem~\ref{thm:nsc}) is based on the constructive method of \cite{Yur02, Bond22-alg}. This method reduces the inverse problem to the linear main equation \eqref{main} in the Banach space of bounded infinite sequences $m$. Our NSC include asymptotic and structural properties of the spectral data and the requirement of the main equation unique solvability. In addition, we have studied the two special cases, in which the latter requirement can be achieved. The first one is the case of a small perturbation of the spectral data. The investigation of this case implies Theorem~\ref{thm:loc} on the local solvability and stability of the inverse spectral problem. The second case is the self-adjoint one. For this case, we prove Theorem~\ref{thm:suff}, which provides very simple sufficient conditions on the spectral data.

The results of this paper can be extended to various classes of higher-order differential operators generated by differential expression of form
\begin{align} \nonumber
\ell_s(y) := & y^{(s)} + \sum_{k = 0}^{\lfloor s/2\rfloor - 1} (\tau_{2k}(x) y^{(k)})^{(k)} \\ \label{defl} + & \sum_{k = 0}^{\lfloor (s-1)/2\rfloor - 1}  \bigl((\tau_{2k+1}(x) y^{(k)})^{(k+1)} + (\tau_{2k+1}(x) y^{(k+1)})^{(k)}\bigr),
\end{align}
where $s \ge 3$, the coefficients $\{ \tau_{\nu} \}_{\nu = 0}^{s-2}$ can be integrable or distributional, and the notation $\lfloor a \rfloor$ means rounding a real number $a$ down. 
However, such extension will be non-trivial and will cause additional difficulties. Let us discuss some of them.

If the coefficients $\{ \tau_{\nu} \}_{\nu = 0}^{s-2}$ are sufficiently smooth, then the spectral data asymptotics contain a large number of constant coefficients (see, e.g., \cite{Yur02}):
$$
\sqrt[s]{\la_{n,k}} = \sum_{j = -1}^p c_{j,k} n^{-j} +  n^{-(p+1)} \varkappa_{n,k}, \quad \be_{n,k} = n^{r_k} \left(\sum_{j = 0}^{p-1} b_{j,k} n^{-j} + n^{-(p+1)} \varkappa_{n,k}^0 \right).
$$

The constants $c_{j,k}$ and $b_{j,k}$ are related to the coefficients $\{ \tau_{\nu} \}_{\nu = 0}^{s-2}$, and it is a technical algebraic problem to obtain these relations. Additionally, in order to get NSC, one has to invent an algorithm for determining whether given numbers $c_{j,k}$ and $b_{j,k}$ can be the coefficients in the spectral data asymptotics for any boundary value problems or not. Consequently, for technical reasons, it is more convenient to consider the case of distribution coefficients $\{ \tau_{\nu} \}_{\nu = 0}^{s-2}$.

It is important to note that the Mirzoev-Shkalikov regularization has been obtained for the case $\tau_{\nu} = \sigma_{\nu}^{(i_{\nu})}$, $i_{2k+j} = l - k - j$, $l = \lfloor s/2 \rfloor$, $k \ge 1$, $j = 0, 1$,
$\sigma_{\nu} \in L_2(0,1)$ if $s = 2l$ and $\sigma_{\nu} \in L_1(0,1)$ if $s = 2l+1$ (see \cite{MS16, MS19}). For differential expression coefficients of higher singularity orders, there are no regularization results. However, in the Mirzoev-Shkalikov case for $s \ge 3$ and in the cases of lower singularity orders (e.g., $s = 2l$ and either $\sigma_{\nu} \in L_2(0,1)$ or $\sigma_{\nu} \in W_2^{-1}(0,1)$ for all $\nu = \overline{0,s-2}$), a step-by-step process of recovering the coefficients $\sigma_{s-2}$, $\sigma_{s-3}$, \dots, $\sigma_1$, $\sigma_0$ is needed (see \cite{Bond22-alg}). This makes it difficult to obtain NSC on the spectral data, since additional requirements should be imposed at each step.

The proof of the main equation solvability in the self-adjoint case (Theorem~\ref{thm:suff}) can be generalized to arbitrary odd orders. For even orders, the proof technique will be different. 
In \cite{Yur95-mn}, the unique solvability of the main equation has been proved for differential operators with regular coefficients of arbitrary even order on the half-line. For the finite interval, there are no such results. For odd orders, to the best of the author's knowledge, the proof of the main equation solvability in this paper is fundamentally new and has no analogs in previous studies.

Certainly, one can replace the boundary conditions \eqref{bc1} and \eqref{bc2} by separated boundary conditions of higher orders. But then the question of recovering the boundary condition coefficients arises, which is non-trivial for differential operators with distribution coefficients. As an example, consider the Sturm-Liouville equation \eqref{StL} with potential $q \in W_2^{-1}(0,1)$ and with the Robin boundary conditions
$$
y^{[1]}(0) - h y(0) = 0, \quad y^{[1]}(1) + H y(1) = 0,
$$
where $y^{[1]} := y' - \sigma y$, $q = \sigma'$. Then the both coefficients $h$ and $H$ cannot be uniquely recovered from the spectral data (see \cite{HM03}), because the spectral data are invariant with respect to the shift $\sigma := \sigma + c$, $h := h - c$, $H := H + c$. For higher orders, the situation is richer. The uniqueness issues have been studied in \cite{Bond22-half}, while the reconstruction of the boundary condition coefficients requires a separate investigation.

\medskip

{\bf Funding.} This work was supported by Grant 21-71-10001 of the Russian Science Foundation, https://rscf.ru/en/project/21-71-10001/.

\medskip

\noindent Natalia Pavlovna Bondarenko \\
Department of Mechanics and Mathematics, Saratov State University, \\
Astrakhanskaya 83, Saratov 410012, Russia, \\
e-mail: {\it bondarenkonp@info.sgu.ru}


\begin{thebibliography}{99}


\bibitem{BP19}
Braeutigam, I.N.; Polyakov, D.M. On the asymptotics of eigenvalues of a third-order differential operator, St. Petersburg Math. J. 31 (2020), no.~4, 585--606.

\bibitem{Kor19}
Korotyaev, E.L. Resonances of third order differential operators, J. Math. Anal. Appl. 478 (2019), no.~1, 82--107.

\bibitem{Ug19}
U\v{g}urlu, E. Regular third-order boundary value problems, Appl. Math. Comput. 343 (2019), 247--257.

\bibitem{Ug20}
U\v{g}urlu, E. Some singular third-order boundary value problems, Math. Meth. Appl. Sci. 43 (2020), no.~5, 2202--2215.

\bibitem{BK21}
Badanin, A.; Korotyaev, E.L. Third-order operators with three-point conditions associated with Boussinesq's equation, Appl. Anal. 100 (2021), no.~3, 527--560.

\bibitem{Ao22-3ord}
Zhang, H.-Y.; Ao, J.-J.; Mu, D. Eigenvalues of discontinuous third-order boundary value problems with eigenparameter-dependent boundary conditions, J. Math. Anal. Appl. 506 (2022), no.~2, 125680.

\bibitem{ZLW23}
Zhang, M.; Li, K.; Wang, Y. Regular approximation of singular third-order differential operators, J. Math. Anal. Appl. 521 (2023), no.~1, 126940.


\bibitem{Greg87}
Gregu\v{s}, M. Third Order Linear Differential Equations, Springer, Dordrecht (1987).

\bibitem{BP96}
Bernis, F.; Peletier, L.A. Two problems from draining flows involving third-order ordinary differential equations, SIAM J. Math. Anal. 27 (1996), no. 2, 515--527.

\bibitem{TS90}
Tuck, E.O.; Schwartz, L.W. A numerical and asymptotic study of some third-order ordinary differential equations relevant to draining and coating flows, SIAM Rev. 32 (1990), no. 3, 453--469.

\bibitem{McK81}
McKean, H. Boussinesq's equation on the circle, Comm. Pure Appl. Math. 34 (1981), no.~5, 599--691.


\bibitem{Mar77}
Marchenko, V.A. Sturm-Liouville Operators and Their Applications, Birkhauser (1986).

\bibitem{Lev84}
Levitan, B.M. Inverse Sturm-Liouville Problems, VNU Sci. Press, Utrecht (1987).

\bibitem{FY01}
Freiling, G.; Yurko, V. Inverse Sturm-Liouville Problems and Their Applications, Huntington, NY: Nova Science Publishers (2001).

\bibitem{Krav20}
Kravchenko, V.V. Direct and Inverse Sturm-Liouville Problems, Birkh\"auser, Cham (2020).


\bibitem{GL51}
Gel'fand, I.M.; Levitan, B.M. On the determination of a differential equation from its spectral function, Izv. Akad. Nauk SSSR, Ser. Mat. 15 (1951), 309-360 [in Russian].


\bibitem{Yur92}
Yurko, V.A. Recovery of nonselfadjoint differential operators on the half-line from the Weyl matrix, Math. USSR-Sb. 72 (1992), no.~2, 413--438.

\bibitem{Yur95-mn}
Yurko, V.A. On determination of self-adjoint differential operators on a semiaxis, Math. Notes 57 (1995), no.~3, 310--318.

\bibitem{Yur00}
Yurko, V. Inverse problems of spectral analysis for differential operators and their applications, J. Math. Sci. 98 (2000), no.~3, 319--426.

\bibitem{Yur02}
Yurko, V.A. Method of Spectral Mappings in the Inverse Problem Theory, Inverse and Ill-Posed Problems Series, Utrecht, VNU Science (2002).

\bibitem{Leib66}
Leibenson, Z.L. The inverse problem of spectral analysis for higher-order ordinary differential operators, Trudy Moskov. Mat. Obshch. 15 (1966), 70--144; English transl. in Trans. Moscow Math. Soc. 15 (1966).

\bibitem{Leib71}
Leibenson, Z.L. Spectral expansions of transformations of systems of boundary value problems, Trudy Moskov. Mat. Obshch. 25 (1971), 15--58; English transl. in Trans. Moscow Math. Soc. 25 (1971).

\bibitem{Beals85}
Beals, R. The inverse problem for ordinary differential operators on the line, American J. Math. 107 (1985), no.~2, 281--366.


\bibitem{Bond21}
Bondarenko, N.P. Inverse spectral problems for arbitrary-order differential operators with distribution coefficients, Mathematics 9 (2021), no. 22, Article ID 2989.

\bibitem{Bond22-half}
Bondarenko, N.P. Linear differential operators with distribution coefficients of various singularity orders, Math. Meth. Appl. Sci. (2022), published online, DOI: http://doi.org/10.1002/mma.8929

\bibitem{Bond22-alg}
Bondarenko, N.P. Reconstruction of higher-order differential operators by their spectral data, Mathematics 10 (2022), no. 20, Article ID 3882 (32 pp.) 


\bibitem{MS16}
Mirzoev, K.A.; Shkalikov, A.A. Differential operators of even order with distribution coefficients, Math. Notes 99 (2016), no.~5, 779--784.

\bibitem{MS19}
Mirzoev, K.A.; Shkalikov, A.A. Ordinary differential operators of odd order with distribution coefficients, preprint (2019), arXiv:1912.03660 [math.CA].

\bibitem{Vlad17}
Vladimirov, A.A. On one approach to definition of singular differential operators, preprint (2017), arXiv:1701.08017 [math.SP].


\bibitem{Bond23-asympt}
Bondarenko, N.P. Spectral data asymptotics for the higher-order differential operators with distribution coefficients, J. Math. Sci. (2023), published online.
DOI: https://doi.org/10.1007/s10958-022-06118-x

\bibitem{SS20}
Savchuk, A.M.; Shkalikov, A.A. Asymptotic analysis of solutions of ordinary differential equations with distribution coefficients, Sb. Math. 211 (2020), no.~11, 1623--1659.


\bibitem{Nai68}
Naimark, M.A. Linear Differential Operators, 2nd ed., Nauka, Moscow (1969); English transl. of 1st ed., Parts I,II, Ungar, New York (1967, 1968).


\bibitem{But07}
Buterin, S.A. On inverse spectral problem for non-selfadjoint Sturm-Liouville operator on a finite interval, J. Math. Anal. Appl. 335 (2007), no. 1, 739--749.

\bibitem{BSY13}
Buterin, S.A.; Shieh, C.-T.; Yurko, V.A. Inverse spectral problems for non-selfadjoint second-order differential operators with Dirichlet boundary conditions, Boundary Value Problems (2013), 2013:180.


\bibitem{KZ96}
Kong, Q.; Zettl, A. Eigenvalues of regular Sturm-Liouville problems, J. Diff. Eqns. 131, no.~1, 1--19.


\bibitem{Ao22-4ord}
Zhang, H.-Y.; Ao, J.-J.; Bo, F.-Z. Eigenvalues of fourth-order boundary value problems with distributional potentials, AIMS Mathematics 7 (2022), no.~5, 7294--7317.

\bibitem{HM03}
Hryniv, R.O.; Mykytyuk, Y.V. Inverse spectral problems for Sturm-Liouville operators with singular potentials, Inverse Problems 19 (2003), no.~3, 665--684.
\end{thebibliography}
\end{document}